\renewenvironment{abstract}{%
	\if@twocolumn
	\section*{\abstractname}%
	\else 
	\begin{center}%
		{\bfseries \Large\abstractname\vspace{\z@}}
	\end{center}%
	\quotation
	\fi}
{\if@twocolumn\else\endquotation\fi}
\providecommand{\keywords}[1]
{
  
\textbf{{Keywords:}} #1
  
}
\theoremstyle{definition}
\newtheorem{theorem}{Theorem}[section]
\newtheorem{lem}[theorem]{Lemma}
\newtheorem{prop}[theorem]{Proposition}
\newtheorem{defi}[theorem]{Definition}
\newtheorem{assump}[theorem]{Assumption}
\newtheorem{remark}[theorem]{Remark}
\numberwithin{equation}{section}
\newcommand{\sym}[1]{(#1)_{\mbox{\tiny{sym}}}}
\newcommand{\skw}[1]{(#1)_{\mbox{\tiny{skw}}}}
\newcommand{\dom}[1]{\mbox{dom}(#1)}
\newcommand{\divge}[1]{\mbox{div}(#1)}
\newcommand{\tU}{\tilde{\pmb U}}
\newcommand\superimpose[2]{%
	\ooalign{$\m@th#1\@firstoftwo#2$\cr
		\hidewidth$\m@th#1\@secondoftwo#2$\hidewidth}%
}
\newcommand{\threedotsord}[0]{\mathpalette\superimpose{{\mathop:}{\cdot}}} 
\newcommand{\threedotsbin}[0]{\mathbin{\threedotsord}}     
\title{Analysis of a Cahn--Hilliard model for viscoelastoplastic two-phase flows}
\author{Fan Cheng\thanks{Freie Universit\"at Berlin, Institute of Mathematics, Arnimallee 9, 14195 Berlin, Germany, e-mail: fan.cheng@fu-berlin.de}, Robert Lasarzik\thanks{Weierstrass Institute for Applied Analysis and Stochastics, Anton-Wilhelm-Amo-Str. 39, 10117 Berlin, Germany, e-mail: robert.lasarzik@wias-berlin.de}, Marita Thomas\thanks{Freie Universit\"at Berlin, Institute of Mathematics, Arnimallee 9, 14195 Berlin, Germany, e-mail: marita.thomas@fu-berlin.de}}
\date{\today}
\begin{document}
        \maketitle
	\begin{abstract}
    \noindent
    We study a Cahn--Hilliard two-phase model describing the flow of two viscoelastoplastic fluids, which arises in geodynamics. A phase-field variable indicates the proportional distribution of the two fluids in the mixture. The motion of the incompressible mixture is described in terms of the volume-averaged velocity. Besides a volume-averaged Stokes-like viscous contribution, the Cauchy stress tensor in the momentum balance contains an additional volume-averaged internal stress tensor to model the elastoplastic behavior. This internal stress has its own evolution law featuring the nonlinear Zaremba-Jaumann time-derivative and the subdifferential of a non-smooth plastic potential. The well-posedness of this system is studied in two cases: Based on a regularization by stress-diffusion we obtain the existence of Leray-Hopf-type weak solutions. In order to deduce existence results also in the absence of the regularization 
    we introduce the concept of dissipative solutions, which is based on an estimate for the relative energy. 
    We discuss general properties of dissipative solutions and show their existence for the viscoelastoplastic two-phase model in the setting of stress-diffusion. By a limit passage in the relative energy inequality for vanishing stress-diffusion, we conclude an existence result for the non-regularized model.
\end{abstract}
    \keywords{viscoelastoplastic fluid, Cahn-Hilliard equation with singular potential, relative energy inequality, non-smooth potential, vanishing stress diffusion, dissipative solutions, Leray-Hopf solutions}
    \paragraph{{\bf MSC2020:} 
    35A15, 
    35A35, 
    35M86, 
    35Q86, 
    76Txx, 
    76M30. 
    }
    \newpage
	\tableofcontents
	\section{Introduction}\label{section introduction}
The movement of tectonic plates driven by convective processes within the Earth’s mantle is a generally recognized theory which explains many geological phenomena, e.g., earthquakes, volcanoes, formation of mountains, ocean trenches, mid-ocean ridges, and island arcs, see~\cite{Schubert2001} for details. These tectonic plates form the Earth's lithosphere, consisting of the Earth's crust and the uppermost part of its mantle. On geological time scales of millions of years, the moving and mechanically deforming plates in the mantle are treated as non-Newtonian viscoelastoplastic fluids. 

In this paper, we consider a system of equations which describes a two-phase flow of a incompressible mixture of two viscoelastoplastic fluids arising from geodynamics. In a time interval $(0,T)$, where $T\in(0,\infty]$, and a bounded $C^{2}-$domain $\Omega\subseteq\mathbb{R}^3$, the system is given as:
\begin{subequations}
	\label{two phase system}
	\begin{align}
		\partial_{t} (\rho v)+\divge{v\otimes(\rho v+J)}-\divge{\mathbb{T}}&=f-\varepsilon\divge{\nabla\varphi\otimes\nabla\varphi}&\text{ in }\Omega\times(0,T), 
        \label{fluid equation momentum}\\
		\divge{v}&=0&\text{ in }\Omega\times(0,T),
        \label{fluid equation incompressible}\\
        \mathbb{T}&=\eta(\varphi)S+2\nu(\varphi)\sym{\nabla v}-p\mathbb{I}&\text{ in }\Omega\times(0,T),
        \label{fluid equation cauchy stress tensor}\\
		\overset{\triangledown}{S}+\partial {P}(\varphi;S)-\gamma\Delta S&\ni\eta(\varphi)\sym{\nabla v}&\text{ in }\Omega\times(0,T), \label{fluid equation stress}\\
		\partial_{t}\varphi+v\cdot\nabla\varphi&=\Delta\mu&\text{ in }\Omega\times(0,T), 
        \label{first CHE}\\
		\mu&=\frac{1}{\varepsilon}W^{\prime}(\varphi)-\varepsilon\Delta\varphi,&\text{ in }\Omega\times(0,T). 
        \label{second CHE}		
	\end{align}
The system is complemented by the following boundary and initial conditions 
	\label{BC and ID for TP}
	\begin{align}
		v|_{\partial\Omega}&=0\mbox{ on }\partial\Omega\times(0,T),
        \\
        \gamma \vec{n}\cdot\nabla S|_{\partial\Omega}&=0\mbox{ on }\partial\Omega\times(0,T),
        \\\vec{n}\cdot\nabla\varphi|_{\partial\Omega}=\vec{n}\cdot\nabla\mu|_{\partial\Omega}&=0\mbox{ on }\partial\Omega\times(0,T),
        \\
		(v,S,\varphi)|_{t=0}&=(v_{0},S_{0},\varphi_{0})\mbox{ in }\Omega,
	\end{align}
\end{subequations}
where $\vec{n}$ denotes the outward unit normal vector of $\Omega$.

Equation~\eqref{fluid equation momentum} describes the momentum balance for fluids, phrased in terms of the volume-averaged Eulerian velocity field $v:[0,T]\times\Omega\to\mathbb{R}^{3}$, the mass density $\rho:[0,T]\times\Omega\to\mathbb{R}$, the volume-averaged mass flux $J:[0,T]\times\Omega\to\mathbb{R}^{3}$ which occurs due to the unmatched mass density, the Cauchy stress tensor $\mathbb{T}:[0,T]\times\Omega\to\mathbb{R}^{3\times3}$, an external loading $f:[0,T]\times\Omega\to\mathbb{R}^{3}$, and the Korteweg stress representing the capillarity stress, which is modeled as $\varepsilon \nabla\varphi\otimes\nabla\varphi$ with a parameter $\varepsilon>0$. Equation~\eqref{fluid equation incompressible} states the incompressibility condition. Equation~\eqref{fluid equation cauchy stress tensor} provides the Cauchy stress tensor for viscoelastoplastic fluids, consisting of two parts: a radial part, given by the pressure $p:[0,T]\times\Omega\to\mathbb{R}$, and a deviatoric part, which is in the form of a symmetric matrix with zero trace including the viscous part given as $2\nu(\varphi)\sym{\nabla v}$ and an extra contribution by the internal $S$ given as $\eta(\varphi)S$ where coefficients $\nu$ and $\eta$ are functions of $\varphi$ arising from the viscosity and the elasticity of the fluids. The strain rate $\sym{\nabla v}=\frac{1}{2}(\nabla v + \nabla v^{\top})$, which is the symmetric part of the velocity gradient, describes the relative motion between the particles. 

Equation~\eqref{fluid equation stress} characterizes the evolution law of the volume-averaged internal stress $S:[0,T]\times\Omega\to\mathbb{R}_{\mathrm{sym,Tr}}^{3\times3}$, which takes the form of a Maxwell-type stress-strain relation. Moreover, the rate of the internal stress is controlled by the Zaremba-Jaumann rate $\overset{\triangledown}{S}$ defined as
\begin{align}\label{Zaremba-Jaumann rate}
	\overset{\triangledown}{S}:=\partial_{t} S+v\cdot\nabla S+S\skw{\nabla v}-\skw{\nabla v}S,
\end{align}
which is a notion of time derivative widely used in geophysical models. Here, the spin tensor $\skw{\nabla v}=\frac{1}{2}(\nabla v - \nabla v^{\top})$ denotes the skew-symmetric part of the velocity gradient. To model plastic effects, an additional term $\partial P(\varphi;S)$ is incorporated into equation~\eqref{fluid equation stress}. An example of the plastic potential, which is used in geodynamics for the plastic deformation in lithospheric plates, is given as 
\begin{equation}
    P(\varphi;S):=G(\varphi)P_{1}(S)+(1-G(\varphi))P_{2}(S).
\end{equation}
Here,
\begin{equation}
    P_{i}(S):=\begin{cases}
		\frac{a_{i}}{2}\abs{S}^{2}+b_{i}\abs{S}&\mbox{ if }\abs{S}\leq\sigma_{\mathrm{{yield},i}},\\
		\infty&\mbox{ if }\abs{S}>\sigma_{\mathrm{{yield},i}},
	\end{cases}
\end{equation}
where $a_{i}>0$, $b_{i}>0$ are constants of each pure phase $i$ and $\sigma_{\mathrm{{yield},i}}$ is the yield stress of each pure phase $i$, $i=1,2$, which determines the onset of plastic flow behavior. Moreover, $\partial P(\varphi;S)$ denotes the subdifferential of the convex potential $P(\varphi;\cdot)$ in $S$. It is defined by 
\begin{equation}
    \partial{P}(\varphi;S):=\left\{\xi\in\mathbb{R}_{\mathrm{sym,Tr}}^{3\times3}:\langle \xi,\Tilde{S}-S\rangle_{\mathbb{R}^{3\times3}}+P(\varphi;{S})\leq P(\varphi;\tilde{S})\right\}.
\end{equation}
In addition, the plastic potential is given as 
\begin{equation}
    \mathcal{P}(\varphi;S):=\int_{\Omega}P(\varphi;S)\dd{x}.
\end{equation}

The Cahn-Hilliard type equation~\eqref{first CHE}-~\eqref{second CHE} describes the evolution law of the phase-field variable $\varphi:[0,T]\times\Omega\to\mathbb{R}$ which indicates the presence of each of the two phases. Hence, in the physical sense, we expect, for $(t,x)\in[0,T]\times\Omega$
\begin{equation}
    \varphi(t,x)\begin{cases}
        =-1,&\text{ pure fluid 1,}\\
        \in(-1,1),&\text{ mixture of fluid 1 and fluid 2,}\\
        =1,&\text{ pure fluid 2.}
    \end{cases}
\end{equation}
Moreover, in equation~\eqref{first CHE}-\eqref{second CHE}, $\mu:[0,T]\times\Omega\to\mathbb{R}$ is the associated chemical potential, $W:\mathbb{R}\to[0,\infty]$ is a singular potential, and $\varepsilon>0$ is a small parameter related to the thickness of the interface. 

In particular, the mass density is modeled as 
\begin{equation}
    \rho(\varphi):=\frac{1-\varphi}{2}\rho_{1}+\frac{1+\varphi}{2}\rho_{2},
\end{equation}
where $\rho_{i}>0$ is the constant mass density of each pure fluid $i$, $i=1,2$. Hence, equation~\eqref{first CHE} provides the additional continuity equation
\begin{equation}
    \partial_{t}\rho+\divge{\rho v + J}=0.
\end{equation}

To have an understanding of this system, let us first define the total energy of the system at time $t\in[0,T]$, i.e., the sum of the kinetic energy depending on  $\rho$ and $v$, the elastic energy depending on $S$ and the phase-field energy depending on $\varphi$:
\begin{equation}\label{Energy smooth solution}
	{E}(t):=\mathcal{E}(v(t),S(t),\varphi(t))
    :=\int_{\Omega}
    \frac{\rho(\varphi)}{2}\abs{v(x,t)}^{2}
    +\frac{1}{2}\abs{S(x,t)}^{2}
    +\frac{1}{2}\abs{\nabla\varphi}^{2}+W(\varphi)\dd{x}.
\end{equation}
Assume for now that we have a smooth solution $(v,S,\varphi,\mu)$ of system \eqref{two phase system}. Then we multiply \eqref{fluid equation momentum} by $v$, \eqref{fluid equation stress} by $S$, \eqref{first CHE} by $\mu$ and \eqref{second CHE} by $\partial_{t}\varphi$, integrate over space and time, and perform an integration by parts, so that we obtain the following energy-dissipation balance:
\begin{equation}\label{original total energy balance}
    {E}(t)	+\int_{0}^{t}\int_{\Omega}2\nu(\varphi)\abs{\sym{\nabla v}}^{2}
	+\gamma\abs{\nabla S}^{2}
	+\xi:S 
    +\abs{\nabla \mu}^{2} \dd{x}\dd{\tau}	
    ={E}(0)+\int_{0}^{t}\langle f, v\rangle_{H^{1}}\dd{\tau}
\end{equation}
for all $t\in[0,T]$ and where $\xi\in \partial{P}(\varphi;S)$. From this energy-dissipation balance, we can see that the total energy of the system is dissipated by four parts: a Newtonian viscosity, a quadratic stress diffusion, dissipation due the plastic deformation stemming from the non-smooth potential $P$ and a quadratic term due to phase separation involving the chemical potential $\mu$.

The mathematical challenges associated with analysis of this system stem from the following: First of all, the momentum balance~\eqref{fluid equation momentum} comes with all the difficulties arising from the three-dimensional Navier--Stokes equations. Based on the large body of analytical results on Navier--Stokes equations, cf. e.g.~\cite{constantinNavierStokesEquations1988a,lionsMathematicalTopicsFluid1996,galdiIntroductionNavierStokesInitialBoundary2000b,farwigNewClassWeak2006a,sohrNavierStokesEquationsElementary2001a,roubicekNonlinearPartialDifferential2013,robinsonThreeDimensionalNavierStokes2016a}, we cannot expect a better class of solutions for the velocity field than Leray--Hopf solutions, which were introduced in~\cite{learyhopfsolution1,learyhopfsolution2}. 

Moreover, the stress evolution equation~\eqref{fluid equation stress} includes a nonlinearity in the Zaremba-Jaumann derivative~\eqref{Zaremba-Jaumann rate} of $S$ and a multi-valued derivative due to the non-smoothness of the dissipation potential ${P}$. \cite{lionsGLOBALSOLUTIONSOLDROYD2000} studies the global existence of solutions based on the Zaremba-Jaumann derivative together with a smooth dissipation potential such that $\partial{P}(S)=a S$. In addition, the stress diffusion $-\gamma\Delta S$ with $\gamma>0$ is introduced as a regularization for analytical reasons as in~\cite{eiterLerayHopfSolutionsViscoelastoplastic2022b}. However, in order to get closer to models used in geoscientific applications, e.g.~\cite{moresi_mantle_2002}, we aim to avoid it in our analysis.

Furthermore, this model contains more than one phase, which also gives us the difficulties arising from the coupled Cahn--Hilliard--Navier--Stokes system. We refer to~\cite{elliottCahnHilliardEquation1986a,abelsConvergenceEquilibriumCahn2007} for the study of Cahn--Hilliard equations and to~\cite{abelsExistenceWeakSolutions2009,colliGlobalExistenceWeak2012,modelingofsystem,abelsExistenceWeakSolutions2013} for the study of the Cahn--Hilliard--Navier--Stokes system. In contrast to the above-mentioned works, we additionally face the difficulties of the multi-valued stress evolution~\eqref{fluid equation stress}. A single-phase system of the momentum and stress equations has already been studied in~\cite{eiterLerayHopfSolutionsViscoelastoplastic2022b} and~\cite{eiterWeakstrongUniquenessEnergyvariational2022b}.  

Our goal is to study the existence results for system~\eqref{two phase system} not only in the case $\gamma>0$ but, more importantly, also in the case $\gamma=0$. This will be achieved with the help of an alternative concept of solution: the dissipative solution. The idea of dissipative solutions is that the solutions do not satisfy the equation in the distributional sense anymore, but, instead, one controls the difference between the solutions and smooth test functions satisfying the equations in terms of the relative energy and the relative dissipation. The concept of dissipative solutions was introduced by P.L. Lions~\cite[Sec.~4.4]{lionsMathematicalTopicsFluid1996} in the context of the incompressible Euler equations and his motivation stemmed from the consideration of the singular limit in the Boltzmann equation and identification of its limits~\cite{LionsBoltzman}. Since then, dissipative solutions have been applied in different contexts, for instance to singular limits in the Boltzman equations~\cite{saint}, incompressible viscous electro-magneto-hydrodynamics~\cite{raymond}, equations of viscoelastic diffusion in polymers~\cite{viscoelsticdiff}, the Ericksen--Leslie equations~\cite{diss,approx}, or finite-element approximation of nematic electrolytes~\cite{Lubo}. Moreover, it is also applied to isothermal damped Hamiltonian systems in \cite{zbMATH07432866} and to a viscoelastoplastic single-phase models in~\cite{eiterWeakstrongUniquenessEnergyvariational2022b}, where this notion of solutions is called energy-variational solution. The term dissipative solutions is also used for other solution concepts. On the one hand, it is used in \cite{feireisl2014relative}  in the context of the Navier-Stokes-Fourier system, where it basically denotes a weak solution. On the other hand the term dissipative solution is also used for different measure-valued solution frameworks, see for instance~\cite{breit,basaric}. These concepts are different from the dissipative solutions in~\cite{lionsMathematicalTopicsFluid1996} and we rather refer with this term to the original definition by Lions.

	\section{General notations, preliminaries and assumptions}\label{section preliminary}
In this section, we fix the notation that will be used throughout this work and recall some useful results that will be applied for our analysis. 
\subsection{General notations}
By default, we use Einstein’s summation convention for vectors and tensors. Let $a=(a_j)_{j=1}^{3},b=(b_j)_{j=1}^{3}\in\mathbb{R}^3$ be two vectors, then their inner product is written as $a\cdot b:=a_{j}b_{j}$, and their tensor product is written as $a\otimes b=(a_{j}b_{k})_{j,k=1}^{3}$.  Similarly, let $A=(A_{jk})_{j,k=1}^{3},B=(B_{jk})_{j,k=1}^{3}\in\mathbb{R}^{3\times3}$ be two second-order tensors, the tensor inner product is written as $A:B=A_{jk}B_{jk}$. Besides, for two third-order tensors $C=(C_{jkl})_{j,k,l=1}^{3},D=(D_{jkl})_{j,k,l=1}^{3}\in\mathbb{R}^{3\times3\times3}$, we denote the inner product by $C\threedotsbin D=C_{jkl}D_{jkl}$. The tensor product between a second-order tensor $A\in\mathbb{R}^{3\times3}$ and a vector $a\in\mathbb{R}^{3}$ gives a third-order tensor and it is defined as $(A\otimes a)_{jkl}=(A_{jk}a_{l})_{j,k,l=1}^{3}$. We write the transpose and trace of a matrix $A\in\mathbb{R}^{3\times3}$ in the usual way that $A^{\top}$ and $\mbox{Tr}A$. Moreover, we set the space of the symmetric and trace-free second order tensors as
\begin{align}
    \mathbb{R}_{\mathrm{sym,Tr}}^{3\times3}
    :=\left\{A\in\mathbb{R}^{3\times3}:A=A^{\top},\mbox{Tr}A=0\right\}.
\end{align} 

The point $(x,t)\in\Omega\times(0,T)$ is defined by the spatial variable $x\in\Omega$ and the time variable $t\in(0,T)$. Thus, we write the partial time derivative and partial spatial derivative of a (sufficiently regular) function $u:\Omega\times(0,t)\to\mathbb{R}$ as $\partial_{t} u$ and $\partial_{x_{i}} u$, where $i=1,2,3$. Moreover, $\nabla u$ and $\Delta u$ denote the gradient and Laplace of $u$ with respect to spatial variable. The symmetrized and skew-symmetrized part of $\nabla v$ of a vector field $v:\Omega\to\mathbb{R}^{3}$ is given by 
\begin{align}
	\sym{\nabla v}:=\frac{1}{2}\left(\nabla v+\nabla v^{\top}\right)
	\mbox{ and }
	\skw{\nabla v}:=\frac{1}{2}\left(\nabla v-\nabla v^{\top}\right).
\end{align}
We also write $\divge{v}=\partial_{x_{i}} v_{i}$ as the divergence of vector field $v$. Similarly, for a second-order tensor $S$, the divergence is defined by $\divge{S}=\partial_{x_{k}} S_{jk}$.

~\\
\textbf{Function spaces.} Let $X$ be a Banach space with norm $\lVert\cdot\rVert_{X}$ and dual space $X^{\prime}$. The same notation is used also for $X^{3}$ and $X^{3\times3}$. When the dimension is clear, we simply write $X$ instead of $X^{3}$ or $X^{3\times3}$. We use $\langle x^{\prime},x\rangle_{X}$ to denote the duality pairing of $x^{\prime}\in X^{\prime}$ and $x\in X$. 

The space $C^{\infty}(\Omega)$ denotes the class of smooth functions in $\Omega$ and the space $C_{0}^{\infty}(\Omega)$ denotes the class of smooth functions with compact support in $\Omega$. The Lebesgue spaces and Sobolev spaces are denoted as $L^p(\Omega)$ and $W^{k,p}(\Omega)$ for $p\in[1,\infty]$ and $k\in\mathbb{N}$, in particular, for $p=2$, we write $W^{k,2}(\Omega)=H^{k}(\Omega)$. Moreover, we write $H_{0}^{1}(\Omega)$ as the space of functions in $H^1(\Omega)$ whose boundary value is zero in the trace sense and $H^{-1}(\Omega):=(H_{0}^{1}(\Omega))^{\prime}$ is the dual space.

Now let $(0,T)\subseteq\mathbb{R}$ be an interval, the space $C^{0}(0,T;X)$ consists of the class of continuous functions in time with values in the Banach space $X$. For $p\in[1,\infty]$, the corresponding Lebesgue-Bochner spaces are denoted by $L^{p}(0,T;X)$. Moreover, we write $W^{1,p}(I;X):=\left\{u\in L^{p}(0,T;X):\partial_{t}u\in L^{p}(0,T;X)\right\}$ and $H^{1}(0,T;X)=W^{1,2}(0,T;X)$. The local Lebesgue-Bochner spaces $L_{\mathrm{loc}}^{p}(0,T;\Omega)$ and $H_{\mathrm{loc}}^{1}(0,T;\Omega)$ consist of the class of functions in $L^{p}(J;X)$ and $H^{1}(J;X)$ for every compact subinterval $J\subseteq (0,T)$ respectively. Besides, we will simply write $u(t):=u(\cdot,t)$ for $u$ defined on $\Omega\times I$.
For any $u\in L^{1}(\Omega)$
\begin{align}\label{mean value in Omega}
	u_{\Omega}:=\frac{1}{|\Omega|}\int_{\Omega}u\dd{x}
\end{align} 
is the mean value of $u$ in $\Omega$.

~\\
\textbf{Solenoidal vector fields and symmetric deviatoric fields.} We introduce function spaces for solenoidal (divergence-free) vector fields and symmetric, deviatoric (trace-free) fields. The corresponding classes of smooth functions on $\Omega$ are given by
\begin{subequations}
\begin{align}
	C_{0,\mathrm{div}}^{\infty}(\Omega)&:=\left\{\varphi\in C_{0}^{\infty}(\Omega)^3:\divge{\varphi}=0\text{ in }\Omega\right\},
    \\
    C_{\mathrm{sym,Tr}}^{\infty}(\bar{\Omega})&:=\left\{\psi\in C^{\infty}(\bar{\Omega})^{3\times 3}:\psi=\psi^{\top},\Tr(\psi)=0\text{ in }\Omega\right\}.
\end{align}
We further write the time-dependent solenoidal vector fields and symmetric, deviatoric fields as
\begin{align}
	C_{0,\mathrm{div}}^{\infty}(\Omega\times I)&:=\left\{\Phi\in C_{0}^{\infty}(\Omega\times I)^3:\divge{\Phi}=0\text{ in }\Omega\times I\right\},
    \\
	C_{0,\mathrm{sym,Tr}}^{\infty}(\Omega\times I)&:=\left\{\Psi\in C_{0}^{\infty}(\Omega\times I)^{3\times3}:\Psi=\Psi^{\top},\Tr(\Psi)=0\text{ in }\Omega\times I\right\},
\end{align}
where $I\subseteq[0,\infty)$ is an interval. The corresponding Lebesgue space of space-integrable functions on $\Omega$ are defined by
\begin{align}
	L_{\mathrm{div}}^{2}(\Omega)&:=\left\{v\in L^{2}(\Omega)^{3}:\divge{v} =0\text{ in }\Omega\right\},\\
	L_{\mathrm{sym,Tr}}^{2}(\Omega)&:=\left\{S\in L^{2}(\Omega)^{3\times 3}:S=S^{\top},\Tr(S)=0\text{ in }\Omega\right\}.
\end{align}
The Sobolev spaces obtained as the closure of $C_{0,\mathrm{div}}^{\infty}(\Omega)$ and $C_{0,\mathrm{sym,Tr}}^{\infty}(\Omega)$ with respect to the $H^{1}(\Omega)$-norm are denoted by
\begin{align}
	H_{0,\mathrm{div}}^{1}(\Omega)&:=\left\{v\in H_{0}^{1}(\Omega)^{3}: \divge{v}=0\text{ in }\Omega\right\},
    \\
    H_{\mathrm{sym,Tr}}^{1}(\Omega)&:=\left\{S\in H^{1}(\Omega)^{3\times 3}:S=S^{\top},\Tr(S)=0\text{ in }\Omega\right\}.
\end{align}
\end{subequations}
Notice that all the boundary conditions are identified in the trace sense. \\
\subsection{General assumptions and further notations}
In the following, we collect and discuss the mathematical assumptions on the domain, the given data, and the material parameters.
\begin{assump}[on the domain]\label{assumption on Omega}
    We assume that $\Omega\subseteq\mathbb{R}^3$ is a bounded domain with $C^{2}$-boundary $\partial\Omega$ and write $\vec{n}$ as the outward unit normal vector.
\end{assump}

Also in view of~\eqref{Energy smooth solution}, we make the following hypothesis for the non-smooth dissipation potential $\mathcal{P}$ in~\eqref{fluid equation stress}:
\begin{assump}[on the plastic potential]\label{assumption on the dissipation potential}
	For the plastic potential 
    \begin{equation}\label{integral form of dissipation potential}
    \begin{aligned}
        \mathcal{P}:L^{2}(\Omega)\times L_{\mathrm{sym,Tr}}^{2}(\Omega)&\to[0,\infty]\,\\
        (\varphi;S)&\mapsto\int_{\Omega}P(x,\varphi(x),S(x))\dd{x},
    \end{aligned}
    \end{equation}
    we make the following assumptions: The density $P:\Omega\times\mathbb{R}\times \mathbb{R}_{\mathrm{sym,Tr}}^{3\times 3}\to[0,\infty]$ is proper and measurable with ${P}(x,\varphi,0)=0$ for all $x \in \Omega$ and $\varphi\in\mathbb{R}$. Moreover, 
 \begin{itemize}
    \item for all $x\in\Omega$, the mapping $(y,z)\mapsto P(x,y,z)$ is lower semicontinuous,
     \item for all $(x,y) \in \Omega\times\mathbb{R}$, the mapping $z \mapsto {P}(x,y,z) $ is convex,
     \item for all $(x,z)\in \Omega\times\mathbb{R}_{\mathrm{sym,Tr}}^{3\times 3}$, the mapping $y \mapsto {P}(x,y,z) $ is continuous.
 \end{itemize}
 Besides, the convex partial subdifferential of $\mathcal{P}(\varphi;\cdot)$ at point $S$ is given by
\begin{equation}\label{definition of subdifferential}
	\partial\mathcal{P}(\varphi;S):=\left\{ \xi\in (L_{\mathrm{sym,Tr}}^{2}(\Omega))^{\prime}:\langle \xi,\tilde{S}-S\rangle_{L^{2}(\Omega)}+\mathcal{P}(\varphi;S)\leq\mathcal{P}(\varphi;\tilde{S})\mbox{ for all }\tilde{S}\in L_{\mathrm{sym,Tr}}^{2}(\Omega) \right\}.
\end{equation}
Notice that, by definition, $\partial\mathcal{P}(\varphi;S)=\emptyset$ if $\mathcal{P}(\varphi;S)=\infty$.

\end{assump}

Furthermore, we make the following hypotheses for the initial data and the external loading:
\begin{assump}[on the given data]\label{assumptions on the initial data}
	Assume that $v_{0}\in L_{\mathrm{div}}^{2}(\Omega)$, $S_{0}\in L_{\mathrm{sym,Tr}}^{2}(\Omega)$ and $f\in L_{\mathrm{loc}}^{2}([0,T);H^{-1}(\Omega)^{3})$. Moreover, assume that $\varphi_{0}\in H^{1}(\Omega)$ with $\abs{\varphi_{0}}\leq1$ almost everywhere in $\Omega$ and
	\begin{align*}
		\frac{1}{\abs{\Omega}}\int_{\Omega}\varphi_{0}\dd{x}\in(-1,1).
	\end{align*}
\end{assump}

In order to guarantee the existence of weak solutions, we make the following assumptions on the coefficients and the singular potential. The idea of these assumptions is inspired directly by \cite[][Assumption 3.1]{abelsExistenceWeakSolutions2013} and \cite[][Assumption 1.1]{abelsConvergenceEquilibriumCahn2007}.

\begin{assump}[on the material parameters]\label{TP assumption}
    The dependence of the material parameters on the composition of the mixture, i.e., on $\varphi$ is assumed to be as follows:\\
	(1) The dependence of the mass density $\rho$ on the phase-field variable $\varphi$ is given by 
	\begin{align}\label{assumption on density}
		\rho(\varphi)=\frac{\rho_{1}+\rho_{2}}{2}+\frac{\rho_{2}-\rho_{1}}{2}\varphi
	\end{align}
	where the constant $\rho_{i}>0$ is the mass density of fluid $i$, $i=1,2$.
    \\
	(2) For the viscosity parameter $\nu$ and the elastic modulus $\eta$, we assume that
    $\nu\in C^{0}(\mathbb{R})$, $\eta\in C^{1}(\mathbb{R})$ and 
	\begin{align}\label{assumption on coefficients}
		0<\nu_{1}\leq\nu(\varphi)\leq \nu_{2} \quad 0\leq\eta_{1}\leq\eta(\varphi)\leq\eta_{2},\text{ and }\abs{\eta^{\prime}(\varphi)}\leq C\text{ for all }\varphi\in\mathbb{R}
	\end{align}
	for some positive constants $C,\eta_{1},\eta_{2},\nu_{1},\nu_{2}$. Herein, the constants $\eta_{i},\nu_{i}$ can be viewed as the constants associated with the pure fluid $i$, $i=1,2$.
\end{assump}
\begin{assump}[on the singular potential]
    For the singular potential $W$, we assume that $W\in C([-1,1])\cap C^{3}(-1,1)$ such that
	\begin{align}\label{assumption on double well potential W}
		\lim\limits_{\varphi\to-1}W^{\prime}(\varphi)=-\infty,\,\lim\limits_{\varphi\to1}W^{\prime}(\varphi)=\infty,\, W^{\prime}(0)=0,\,W^{\prime\prime}(\varphi)\geq-\kappa\mbox{ for some }\kappa>0.
	\end{align} 
	Moreover, we extend $W(\varphi)$ to $+\infty$ for $\varphi\in\mathbb{R}\backslash[-1,1]$. Without loss of generality, we also assume that $W(\varphi)\geq0$ for all $\varphi\in[-1,1]$.
\end{assump}
\begin{remark}
	(1) As an example for such a singular potential, one can consider
	\begin{align*}
		W(\varphi)=\frac{1}{2}\left((1+\varphi)\ln(1+\varphi)+(1-\varphi)\ln(1-\varphi)\right)-\frac{\lambda}{2}\varphi^{2},\,\varphi\in[-1,1],
	\end{align*}
    for fixed $\lambda\geq0$.\\
	(2) Since we will show that a solution $\varphi$ takes values in $(-1,1)$, we actually only need the functions $\nu,\eta$ to be defined on this interval and then extend them outside of this interval in a sufficiently smooth way by constants. So, by the assumptions of continuity and continuous differentiability, respectively, the bounds \eqref{assumption on coefficients} are natural.
\end{remark}
\begin{subequations}\label{energies functionals}
\textbf{Energy functionals.} We define the kinetic energy, the elastic energy and the phase-field energy as follows: 
\begin{align}
    &\mathcal{E}_{\mathrm{kin}}:L^{\infty}(\Omega)\times L_{\mathrm{div}}^{2}(\Omega)\to[0,\infty],\,(\varphi,v)\mapsto\int_{\Omega}\rho(\varphi)\frac{\abs{v}^2}{2}\dd{x},
    \\
    &\mathcal{E}_{\mathrm{el}}:L_{\mathrm{sym,Tr}}^{2}(\Omega)\to[0,\infty],\,
    S\mapsto\int_{\Omega}\frac{\abs{S}^2}{2}\dd{x},
    \\
    &\mathcal{E}_{\mathrm{pf}}:H^{1}(\Omega)\to[0,\infty],\,
	\varphi\mapsto\int_{\Omega}\varepsilon\frac{\abs{\nabla\varphi}^2}{2}+\varepsilon^{-1}W(\varphi)\dd{x}.
\end{align}
Then, the total energy of this system is given by
\begin{equation}
	\mathcal{E}_{\mathrm{tot}}(v,S,\varphi)
	:=\mathcal{E}_{\mathrm{kin}}(\varphi,v)+\mathcal{E}_{\mathrm{el}}(S)+\mathcal{E}_{\mathrm{pf}}(\varphi).
	\label{total energy two phase system}
\end{equation}
\end{subequations}
\textbf{Dissipation functionals:} We define the viscous Stokes-type dissipation, the quadratic stress diffusion, and the quadratic diffusion of the chemical potential according to the Cahn--Hilliard equation as follows:
\begin{subequations}\label{dissipation functionals}
\begin{align}
        &\mathcal{D}_{\mathrm{s}}:L^{1}(\Omega)\times H_{0,\mathrm{div}}^{1}(\Omega)\to [0,\infty],\, 
        (\varphi,v)\mapsto\int_{\Omega}2\nu(\varphi)\abs{\sym{\nabla v}}^{2}\dd{x},
        \\
        &\mathcal{D}_{\mathrm{ch}}:H^{1}(\Omega)\to[0,\infty],\,
        \mu\mapsto\int_{\Omega}\abs{\nabla\mu}^{2}\dd{x},
        \\
        &\mathcal{D}_{\mathrm{sd},\gamma}:H_{\mathrm{sym,Tr}}^{1}(\Omega)\to [0,\infty],\,
        S\mapsto\int_{\Omega}\gamma\abs{\nabla S}^{2}\dd{x}.
\end{align}
Then, the sum of Stokes-type dissipation and Cahn--Hilliard dissipation and the total dissipation potential are given by
\begin{align}
    \mathcal{D}_{\mathrm{chs}}&:=\mathcal{D}_{s}(\varphi,v)+\mathcal{D}_{\mathrm{ch}}(\mu),
    \label{dissipation stokes and chemical}\\
    \mathcal{D}_{\mathrm{tot}}(v,S,\varphi,\mu)&:=\mathcal{D}_{\mathrm{chs}}(v,\varphi,\mu)+\mathcal{D}_{\mathrm{sd},\gamma}(S)+\mathcal{P}(\varphi;S).    
    \label{total dissipation}
\end{align}
Note that~\eqref{total dissipation} also involves the plastic potential $\mathcal{P}$, which is specified in more details in~\eqref{integral form of dissipation potential}.
\end{subequations}

\subsection{Reformulation of system~\eqref{two phase system}}
Now, we reformulate the Korteweg stress that appears on the right-hand side of~\eqref{fluid equation momentum} with the aid of~\eqref{second CHE}. Formally, for a smooth solution $\varphi$, by substituting~\eqref{second CHE} in~\eqref{fluid equation momentum}, we find
\begin{equation}
\begin{aligned}
	-\varepsilon\divge{\nabla\varphi\otimes\nabla\varphi}&=-\varepsilon\Delta\varphi\cdot\nabla\varphi-\varepsilon\nabla\left(\frac{\abs{\nabla\varphi}^2}{2}\right)
	\\
	&=\mu\nabla\varphi-\varepsilon^{-1}W^{\prime}(\varphi)\nabla\varphi-\varepsilon\nabla\left(\frac{\abs{\nabla\varphi}^2}{2}\right)
	\\
	&=\mu\nabla\varphi-\varepsilon^{-1}\nabla(W(\varphi))-\varepsilon\nabla\left(\frac{\abs{\nabla\varphi}^2}{2}\right).
\end{aligned}
\end{equation}
This allows us to define a new pressure term 
\begin{align}\label{new pressure term}
	g=p+\varepsilon^{-1}W(\varphi)+\varepsilon\frac{1}{2}\abs{\nabla\varphi}^{2}
\end{align}
to rewrite \eqref{fluid equation momentum} as
\begin{align}\label{momentum balbace for two phase problem alternative}
	\partial_{t} (\rho v)+\divge{(v\otimes(\rho v+J))}-\divge{(\eta(\varphi)S+2\nu(\varphi)\sym{\nabla v})}+\nabla g=\mu\nabla\varphi.
\end{align}
Hence, when testing \eqref{momentum balbace for two phase problem alternative} with divergence-free vectors, the additional pressure term also vanish. Notice that the singular potential is not convex. But thanks to the assumption that $W^{\prime\prime}$ is bounded from below by $-\kappa$ with $\kappa>0$ cf. \eqref{assumption on double well potential W}, we define
\begin{align}\label{convex double well potential}
	W_{\kappa}(\varphi):=W(\varphi)+\frac{\kappa}{2}\varphi^{2}.
\end{align}
Thanks to the bound $W^{\prime\prime}(\varphi)\geq-\kappa$, we find that $W_{\kappa}^{\prime\prime}(\varphi)=W^{\prime\prime}(\varphi)+\kappa\geq0$, for all $\varphi\in[-1,1]$ which ensures that $W_{\kappa}$ is convex. In particular, it holds that $W_{\kappa}^{\prime}(\varphi)=W^{\prime}(\varphi)+\kappa \varphi$. Using $W_{\kappa}$ in \eqref{second CHE}, this is equivalent to 
\begin{equation}\label{chemical potential with convex double well potential}
	\mu+\frac{\kappa}{\varepsilon}\varphi=\varepsilon^{-1}W_{\kappa}^{\prime}(\varphi)-\varepsilon\Delta\varphi.
\end{equation}

Now, we can consider the energy $\mathcal{E}_{\mathrm{pf},\kappa}:L^2(\Omega)\to\mathbb{R}$ with its domain given by
\begin{subequations}
\begin{align}
	\dom{\mathcal{E}_{\mathrm{pf},\kappa}}&:=\{\varphi\in H^1(\Omega):-1\leq \varphi\leq1\mbox{ a.e.}\},
	\label{domain of convex phase-field energy}\\
	\mathcal{E}_{\mathrm{pf},\kappa}(\varphi)&:=\begin{cases}
		\frac{1}{2}\int_{\Omega}\varepsilon\abs{\nabla \varphi}^{2}\dd{x}+\int_{\Omega}\varepsilon^{-1}W_{\kappa}(\varphi)\dd{x}&\mbox{ for }u\in\dom{\mathcal{E}_{\mathrm{pf},\kappa}},\\
		+\infty&\mbox{ otherwise}.
	\end{cases}
	\label{convex phase-field energy}
\end{align}
\end{subequations}
According to~\cite[][Theorem 4.3]{abelsConvergenceEquilibriumCahn2007}, the domain of the subdifferential is given by 
\begin{equation}
	\dom{\partial \mathcal{E}_{\mathrm{pf},\kappa}}=\left\{\varphi\in H^{2}(\Omega):W_{\kappa}^{\prime}(\varphi)\in L^{2}(\Omega),W_{\kappa}^{\prime\prime}(\varphi)\abs{\nabla \varphi}^{2}\in L^{1}(\Omega),\vec{n}\cdot\nabla \varphi|_{\partial\Omega}=0\right\}
\end{equation}
as well as 
\begin{equation}\label{subdifferential of phase-field energy}
    \partial \mathcal{E}_{\mathrm{pf},\kappa}(\varphi)=\left\{-\varepsilon\Delta \varphi+\varepsilon^{-1}W_{\kappa}^{\prime}(\varphi)\right\}\text{ for all }\varphi\in\dom{\partial \mathcal{E}_{\mathrm{pf},\kappa}}.
\end{equation}
In this case, since $\partial \mathcal{E}_{\mathrm{pf},\kappa}$ is single-valued, it coincides with the G\^{a}teaux derivative $\mathrm{D}\mathcal{E}_{\mathrm{pf},\kappa}(\varphi)=-\varepsilon\Delta\varphi+\varepsilon^{-1}W_{\kappa}^{\prime}(\varphi)$. Additionally, we have the following estimate
\begin{equation}
	\lVert \varphi\rVert_{H^{2}}^{2}
	+\lVert W_{\kappa}^{\prime}(\varphi)\rVert_{L^{2}}^{2}
	+\int_{\Omega}W_{\kappa}^{\prime\prime}(\varphi)\abs{\nabla \varphi}^{2} \dd{x}
	\leq C\left(\lVert \mathrm{D} \mathcal{E}_{\mathrm{pf},\kappa}(\varphi)\rVert_{L^{2}}^{2}
	+\lVert \varphi\rVert_{L^{2}}^{2}+1\right)
	\label{estimate on element in subdifferntial domain}
\end{equation}
for all $\varphi\in\dom{\mathrm{D} \mathcal{E}_{\mathrm{pf},\kappa}}$ as well as 
\begin{equation}
	\mu+\frac{\kappa}{\varepsilon}\varphi
    =-\varepsilon\Delta\varphi+\varepsilon^{-1}W_{\kappa}^{\prime}(\varphi)
    =\mathrm{D} \mathcal{E}_{\mathrm{pf},\kappa}(\varphi)
	\label{reformulation of second CH}
\end{equation}
for all $\varphi\in\dom{\mathrm{D} \mathcal{E}_{\mathrm{pf},\kappa}}$. Thus, we have the following relation between the original phase-field energy $\mathcal{E}_{\mathrm{pf}}$ from~\eqref{energies functionals} and the convexified phase-field energy $\mathcal{E}_{\mathrm{pf},\kappa}$ from~\eqref{convex phase-field energy}
\begin{equation}
	\mathcal{E}_{\mathrm{pf}}(\varphi)
	=\mathcal{E}_{\mathrm{pf},\kappa}(\varphi)-\frac{\kappa}{2\varepsilon}\lVert\varphi\rVert_{L^{2}}^{2}.
\end{equation}

Therefore, after this reformulation, system~\eqref{two phase system} can be written in the following formally equivalent form:\begin{subequations}
	\label{two phase system alternative}
	\begin{align}
		\partial_{t} (\rho v)+\divge{v\otimes(\rho v+J)}-\divge{\mathbb{T}}& =f+\mu\nabla\varphi&\mbox{ in }\Omega\times(0,T),
		\label{two phase system alternative velocity}\\
		\divge{v}&=0&\mbox{ in }\Omega\times(0,T),
		\label{two phase system alternative incompressible}\\
        \mathbb{T}&=\eta(\varphi)+2\nu(\varphi)\sym{\nabla v}-g\mathbb{I}&\mbox{ in }\Omega\times(0,T),
        \label{two phase system alternative cauchy stress tensor}\\
		\overset{\triangledown}{S}+\partial {P}(\varphi;S)-\gamma\Delta S&\ni\eta(\varphi)\sym{\nabla v}&\mbox{ in }\Omega\times(0,T),
		\label{two phase system alternative stress}\\
		\partial_{t}\varphi+v\cdot\nabla\varphi&=\Delta\mu&\mbox{ in }\Omega\times(0,T),
		\label{two phase system alternative phase field}\\
		\mu+{\kappa}{\varepsilon^{-1}}\varphi&=-\varepsilon\Delta\varphi+\varepsilon^{-1}W_{\kappa}^{\prime}(\varphi)&\mbox{ in }\Omega\times(0,T),
		\label{two phase system alternative chemical potential}\\	
		v|_{\partial\Omega}&=0&\mbox{ on }\partial\Omega\times(0,T),
        \\
        \gamma \vec{n}\cdot\nabla S|_{\partial\Omega}&=0&\mbox{ on }\partial\Omega\times(0,T),
		\label{two phase system alternative homogeneous boundary}\\
		\vec{n}\cdot\nabla\varphi|_{\partial\Omega}=\vec{n}\cdot\nabla\mu|_{\partial\Omega}&=0&\mbox{ on }\partial\Omega\times(0,T),
		\label{two phase system alternative neumann boundary}\\
		(v,S,\varphi)|_{t=0}&=(v_{0},S_{0},\varphi_{0})&\mbox{ in }\Omega.
		\label{two phase system alternative initial}
	\end{align}
\end{subequations}

\subsection{Weak solutions of system~\eqref{two phase system} with respect to~\eqref{two phase system alternative} for $\gamma>0$}
Observe that the partial subdifferential $\partial \mathcal{P}(\varphi;S)$ may be multi-valued for non-smooth potentials $\mathcal{P}$. By using the definition of the partial subdifferential, we can introduce the variational inequality
\begin{equation}\label{subdifferential inequality}
    \langle \xi,\tilde{S}-S\rangle_{L_{\mathrm{sym,Tr}}^{2}(\Omega)}\leq\mathcal{P}(\varphi;\tilde{S})-\mathcal{P}(\varphi;S),
\end{equation}
that is equivalent to all $\xi\in\partial\mathcal{P}(\varphi;S)$. 

Accordingly, a weak solution of system~\eqref{two phase system alternative} is defined by a weak formulation and an energy estimate of equation~\eqref{two phase system alternative velocity}-\eqref{two phase system alternative cauchy stress tensor}, an evolutionary variational inequality of equation~\eqref{two phase system alternative stress}, and a weak formulation and an energy estimate of equation~\eqref{two phase system alternative phase field}-\eqref{two phase system alternative chemical potential}.

\begin{defi}[Weak solutions for system~\eqref{two phase system alternative}]\label{gerneralized solution TPP}
\begin{subequations}
    Let $\gamma>0$. Let the assumptions~\ref{assumption on Omega}-\ref{TP assumption} hold true. A quadruplet $(v,S,\varphi,\mu)$ is called a weak solution of the two-phase system \eqref{two phase system alternative} if the following properties are satisfied:\\
    1. The quadruplet $(v,S,\varphi,\mu)$ has the following regularity:
	\begin{align*}
		&v\in L_{\mathrm{loc}}^{\infty}([0,T);L_{\mathrm{div}}^{2}(\Omega))\cap L_{\mathrm{loc}}^{2}([0,T);H_{0,\mathrm{div}}^{1}(\Omega)),\\
		&S\in L_{\mathrm{loc}}^{\infty}([0,T);L_{\mathrm{sym,Tr}}^{2}(\Omega))\cap L_{\mathrm{loc}}^{2}([0,T);H_{\mathrm{sym,Tr}}^{1}(\Omega)),\\
		&\varphi\in L_{\mathrm{loc}}^{\infty}([0,T);H^{1}(\Omega))\cap L_{\mathrm{loc}}^{2}([0,T);H^{2}(\Omega)),W^{\prime}(\varphi)\in L_{\mathrm{loc}}^{2}([0,T);L^{2}(\Omega)),\\
		&\mu\in L_{\mathrm{loc}}^{2}([0,T);H^{1}(\Omega))
	\end{align*}
    2. The quadruplet $(v,S,\varphi,\mu)$ satisfies the following weak formulations and energy estimates:\\
    2.1. Weak formulation of the momentum balance:
	\begin{equation}\label{weak velocity TPP}
	    \begin{aligned}
		&\int_{0}^{T}\int_{\Omega}-\rho v\cdot\partial_{t}\Phi-(\rho v\otimes v):\nabla\Phi+2\nu(\varphi)\sym{\nabla v}:\sym{\nabla \Phi}+\eta(\varphi)S:\sym{\nabla\Phi} \dd{x}\dd{t}
		\\
		-&\int_{0}^{T}\int_{\Omega}(v\otimes J):\nabla\Phi+\mu\nabla\varphi\cdot\Phi \dd{x} \dd{t}
		=\int_{\Omega}\rho_{0}v_{0}\cdot\Phi(\cdot,0) \dd{x}
		+\int_{0}^{T}\langle f,\Phi\rangle_{H^{1}}\dd{t}
	\end{aligned}
	\end{equation}
	for all $\Phi\in C_{0,\mathrm{div}}^{\infty}(\Omega\times[0,T))$, and the partial energy inequality
    \begin{equation}\label{partial energy inequality kinetic}
    \begin{aligned}
        &\mathcal{E}_{\mathrm{kin}}(\varphi(t),v(t))
        +\int_{0}^{t}\int_{\Omega}2\nu(\varphi)\abs{\sym{\nabla v}}^{2}\dd{x}\dd{s}
        +\int_{0}^{t}\int_{\Omega}\eta(\varphi)S:\nabla v \dd{x}\dd{s}
    \\
        \leq&\mathcal{E}_{\mathrm{kin}}(\varphi_{0},v_{0})
        +\int_{0}^{t}\langle f, v\rangle_{H^{1}}\dd{s}
        +\int_{0}^{t}\int_{\Omega}\mu(\nabla\varphi\cdot v)\dd{x}\dd{s},
    \end{aligned}
    \end{equation}
    for almost all $t\in(0,T)$.\\
    2.2. The evolutionary variational inequality for the stress
    \begin{align}
		&\frac{1}{2}\lVert S(t)-\tilde{S}(t)\rVert_{L^2}^{2}
		+\int_{0}^{t}\int_{\Omega} \partial_{t}\tilde{S}:(S-\tilde{S}) - v\cdot\nabla S:\tilde{S} - (S\skw{\nabla v}-\skw{\nabla v}S):\tilde{S}\dd{x} \dd{s}
    \nonumber\\
		+&\int_{0}^{t}\mathcal{P}(\varphi;S)-\mathcal{P}(\varphi;\tilde{S}) \dd{s}
		+\int_{0}^{t}\int_{\Omega}\gamma\nabla S:\nabla(S-\tilde{S}) - \eta(\varphi)\sym{\nabla v}:(S-\tilde{S})\dd{x} \dd{s} 
    \nonumber\\
		\leq&\frac{1}{2}\lVert S_{0}-\tilde{S}(0)\rVert_{L^2}^{2}
    \label{generalized solution stress energy estimate TPP}
    \end{align}
    for all $\tilde{S}\in C_{0,\mathrm{sym,Tr}}^{\infty}(\Omega\times[0,T))\cap\dom{\mathcal{P}(\varphi;\cdot)}$, and a.e. $t\in(0,T)$.\\
    2.3. Weak formulation of the phase-field evolution law:
    \begin{equation}
		\int_{0}^{T}\int_{\Omega}-\varphi\cdot\partial_{t}\zeta+(v\cdot\nabla\varphi)\zeta\dd{x} \dd{t}
		=\int_{\Omega}\varphi_{0}\cdot\zeta(\cdot,0) \dd{x}
		-\int_{0}^{T}\int_{\Omega}\nabla\mu:\nabla\zeta \dd{x} \dd{t} 
		\label{weak first CH TPP}
    \end{equation}
    for all $\zeta\in C_{0}^{\infty}([0,T);C^{1}(\bar{\Omega}))$ as well as 
    \begin{equation}
		\mu=\varepsilon^{-1}W^{\prime}(\varphi)-\varepsilon\Delta\varphi 
		\label{weak second CH TPP}	
    \end{equation}
    almost everywhere in $\Omega\times(0,T)$, and the partial energy inequality
    \begin{equation}
        \mathcal{E}_{\mathrm{pf}}(\varphi(t))
        +\int_{0}^{t}\int_{\Omega}\abs{\nabla\mu}^{2}\dd{x}\dd{s}
        +\int_{0}^{t}\int_{\Omega}\mu(\nabla \varphi\cdot v)\dd{x}\dd{s}
        \leq\mathcal{E}_{\mathrm{pf}}(\varphi_{0}),
        \label{partial energy inequality phase field}
    \end{equation}
    for almost all $t\in(0,T)$.
	\end{subequations}
\end{defi}
\begin{remark}
    Notice that the evolutionary variational inequality~\eqref{generalized solution stress energy estimate TPP}, is defined by testing~\eqref{two phase system alternative stress} with $S-\tilde{S}$, integrating over $\Omega\times(0,t)$, using~\eqref{subdifferential inequality} together with an integration by parts in space and time.
    
    Further notice that, by choosing $\tilde{S}\equiv0$ in \eqref{generalized solution stress energy estimate TPP} and summing with \eqref{partial energy inequality kinetic} and \eqref{partial energy inequality phase field}, a weak solution $(v,S,\varphi,\mu)$ satisfies the following total energy-dissipation estimate: 
    \begin{equation}
    \begin{aligned}
	 	&\mathcal{E}_{\mathrm{tot}}(v(t),S(t),\varphi(t))
	 	+\int_{0}^{t}\mathcal{D}_{\mathrm{tot}}(v,S,\varphi,\mu) 
	 	\dd{\tau}
	 	\leq \mathcal{E}_{\mathrm{tot}}(v_{0},S_{0},\varphi_{0})
            +\int_{0}^{t}\langle f, v\rangle_{H^{1}}\dd{\tau}
	 	\label{total energy estimate for generalized solution for TPP}
    \end{aligned} 
    \end{equation}
	 for almost all $t\in[0,T)$.
\end{remark}

	\section{Dissipative solutions}\label{section dissipative solutions}
In this section, we introduce the notion of dissipative solutions for $\gamma\geq0$ and investigate their relation to the weak solutions from Definition~\ref{gerneralized solution TPP} in the case $\gamma>0$. 
\subsection{General concept of dissipative solutions}
\begin{subequations}
The notion of dissipative solutions is based on a relative energy inequality. In order to better explain the concept, we follow the general approach proposed in~\cite{AgLaRo2024}. For this, we consider two reflexive Banach spaces $\mathbb V$ and $\mathbb Y$ with dual spaces $\mathbb{V}^{\prime}$ and $\mathbb{Y}^{\prime}$ such that $ \mathbb Y \subseteq \mathbb V\subseteq \mathbb{V}^{\prime}\subseteq\mathbb Y^{\prime}$ and a general evolutionary PDE on time interval $(0,T)$ of the form
\begin{equation}
\partial_ t \pmb U(t) + A(\pmb U(t)) = \pmb 0 \quad \text{ in }\mathbb Y^\prime \text{ with }\quad\pmb U(0)=\pmb U_0 \quad \text{in }\mathbb V.\,\label{introGEnEq}
\end{equation}
Here, $A\,:\, \mathbb V\to \mathbb Y^\prime$ denotes a differential operator and $\pmb U_0\in \mathbb V$ the initial datum. Let $\mathcal{E}:\mathbb{V}\to[0,\infty]$ and $\Psi:\mathbb{V}\to[0,\infty]$ be energy functional which is assumed to be twice G\^ateaux differentiable and dissipation functionals associated with~\eqref{introGEnEq}. Furthermore, we introduce a space of test functions $\mathfrak{T}$ such that $\mathrm{D}\mathcal{E}(\tU(t)),\mathcal{A}(\tU(t))\in \mathbb{Y}$ for a.e. $t\in(0,T)$ and all $\tU\in\mathfrak{T}$. For any initial value $\pmb U_0\in \mathbb V$, a sufficiently regular solution $\pmb U\in\mathbb{V}$ with the property $\mathrm{D}\mathcal{E}(\pmb U(t))\in\mathbb{Y}$ for all $t\in[0,T]$ of~\eqref{introGEnEq}  formally fulfills the energy-dissipation mechanism 
\begin{equation}
\mathcal E( \pmb U) \Big|_0^t +\int_0^t \Psi (\pmb U)   \dd{\tau} \leq  0 \label{intreoEnGen}
\end{equation}
for all $ t\in [0,T]$. This is obtained by testing~\eqref{introGEnEq} by $\mathrm{D}\mathcal{E}(\pmb U(t))\in\mathbb{Y}$, and it also implies that 
\begin{equation}\label{intro def of dissipation}
    \Psi(\pmb U):=\langle A(\pmb U),\mathrm{D}\mathcal{E}(\pmb U)\rangle_{\mathbb{Y}}.
\end{equation}

Moreover, we introduce a so-called regularity weight
\begin{equation}\label{regularity weight}
    \mathcal{K}:\mathfrak{T}\to[0,\infty]\text{ with }\mathcal{K}(0)=0,    
\end{equation}
which is to be chosen such that both sides of the relative energy inequality remain finite.

In addition, we define the system operator $\mathcal{A}$ directly following~\eqref{introGEnEq} as 
\begin{equation}\label{introSysoper}
    \begin{aligned}
        \mathcal{A}:\mathfrak{T}&\to \mathbb{V}^{\prime},\,\mathcal{A}(\tU):=\partial_{t}\tU+A(\tU).
    \end{aligned}
\end{equation}

We refer to~\cite{AgLaRo2024} for the full list of assumptions. A major assumption in~\cite{AgLaRo2024} is that the energy $\mathcal E$ is convex, which gives the positivity of the first-order Taylor expansion of the energy functional. Following the assertions in~\cite[Prop.~3.6]{AgLaRo2024}, we define the relative energy~$\mathcal{R}$ and the relative dissipation~$\mathcal{W}^{(\mathcal{K})}$ as the first-order Taylor expansion of the energy $\mathcal E$ and the term 
$ \Psi(\pmb U) - \langle A(\pmb U), \mathrm{D} \mathcal E (\tU) \rangle_{\mathbb{Y}} + \mathcal{K}(\tU) \mathcal E(\pmb U)$  given as    
\begin{equation}\label{relen}
\mathcal{R}(\pmb U| \tU) := \mathcal{E}(\pmb U) - \mathcal E(\tU) - \langle \mathrm{D}\mathcal E( \tU) , \pmb U - \tU\rangle_{{\mathbb{V}}} 
\end{equation}
as well as
\begin{equation}\label{reldis}
\begin{aligned}
    \mathcal{W}^{(\mathcal{K})}(\pmb U| \tU) &: =
   \Psi(\pmb U) - \langle A(\pmb U) , \mathrm{D}\mathcal E (\tU)\rangle_{{\mathbb{Y}}}  
   - \langle A(\tU) ,\mathrm{D}^2\mathcal E(\tU)(\pmb U - \tU) \rangle_{{\mathbb{Y}}} 
   + \mathcal{K}(\tU)\mathcal{R}(\pmb U| \tU) 
   \,,   
\end{aligned}
\end{equation}
where we use~\eqref{intro def of dissipation}. Therefore, we can formally compute
\begin{equation*}
\begin{aligned}
    \mathcal R(\pmb U | \tU) \Big|_0^t 
    + \int_0^t\Psi(\pmb U) \dd{\tau} &{}\leq 
    - \int_0^t \langle \partial_t \pmb U, \mathrm{D}\mathcal{E}(\tU)\rangle_{\mathbb{Y}} 
    + \langle  \pmb U- \tU, \mathrm{D}^2\mathcal E(\tU)\partial_t \tU\rangle_{\mathbb{Y}}  \dd{\tau} 
    \\
   &{} = \int_0^t \langle A(\pmb U), \mathrm{D}\mathcal{E}(\tU)\rangle_{\mathbb{Y}} + \langle \pmb U- \tU,\mathrm{D}^2\mathcal E(\tU)A(\tU)  \rangle_{\mathbb{Y}}  \dd{\tau} \\
   &\quad -\int_0^t
   \langle \mathrm{D}^2\mathcal E (\tU)(\pmb U-\tU),\mathcal{A}(\tU)\rangle_{\mathbb{V}}\dd{\tau}
\end{aligned}
\end{equation*}
By rearranging terms on both sides and using~\eqref{introSysoper} and~\eqref{reldis}, we infer 
\begin{equation}
    \mathcal R(\pmb U | \tU) \Big|_0^t 
    + \int_0^t \mathcal{W}^{(\mathcal{K})}(\pmb U| \tU) +\langle \mathrm{D}^2\mathcal E (\tU)(\pmb U-\tU),\mathcal{A}(\tU)\rangle_{\mathbb{V}} \dd{\tau} \leq \int_0^t \mathcal{K}(\tU) \mathcal R( \pmb U|\tU) \dd{\tau}\,.
\end{equation}
Applying Gronwall's inequality provides the relative energy inequality in this general setting as 
\begin{equation}\label{relenineq}
    \begin{aligned}
     \mathcal{R}(\pmb U(t) | \tU(t)) + \int_0^t \mathrm \exp{\left(\int_s^t\mathcal K(\tU) \dd{\tau} \right)}\left[ \mathcal{W}^{(\mathcal{K})}(\pmb U| \tU ) + \langle   \mathrm{D}^2\mathcal E (\tU)(\pmb U-\tU),\mathcal{A}(\tU) \rangle_{\mathbb{V}} \right] \dd{s}  
 \\  \leq \mathcal{R}(\pmb U_0 | \tU(0))\mathrm \exp{\left(\int_0^t\mathcal{K}(\tU)\dd{s} \right) }   \,
\end{aligned}
\end{equation}   
for a.e. $t\in (0,T)$ and smooth test functions $\tU\in\mathfrak{T}$. 
\begin{defi}[Dissipative solutions for general system~\eqref{introGEnEq}]\label{dissipative solution for general system}
    A function $\pmb U:(0,T)\to\mathbb{V}$ is called a dissipative solution for system~\eqref{introGEnEq}, if $\pmb U$ satisfies~\eqref{relenineq} for all sufficiently regular test functions $\tU\in\mathfrak{T}$ and for a.e. $t\in(0,T)$.
\end{defi}
\end{subequations} 

\subsection{Dissipative solutions for system~\eqref{two phase system alternative}}
A key ingredient for the notion of dissipative solutions in~\cite{AgLaRo2024} is the convexity of the system's energy $\mathcal{E}$. This means for our system~\eqref{two phase system alternative} that the total energy should be convex. But note that the kinetic energy $\mathcal{E}_{\mathrm{kin}}$, from~\eqref{energies functionals} as a function of $(\varphi,v)$ is not convex. In order to overcome the non-convexity in the kinetic energy~$\mathcal E_{\mathrm{kin}}$, we use transformation of variables. Expressing the energy in terms of the mass density $\rho$ and the momentum $p$ with $p=\rho v $ instead of mass density $\rho$ and velocity $v$ makes the kinetic energy convex, {i.e.,}
\begin{align}
\label{Ekin-tilde}
    \tilde{\mathcal{E}}_{\mathrm{kin}}(\rho,p):= \int_\Omega \tilde{\mathcal{E}}_{\mathrm{kin}}(\rho , p) \mathrm d x \text{ with }\tilde{\mathcal{E}}_{\mathrm{kin}}(\rho, p) :=   \begin{cases}
        \frac{p^{2}}{2\rho}&\text{ if }\rho>0, \\
        0&\mbox{ if }\rho=0 \text{ and }p = 0 ,\\
        \infty &\text{ otherwise. }
    \end{cases}
\end{align}
Accordingly, we also denote the the total energy that depends on the momentum as a variable by $\tilde{\mathcal{E}}_{\mathrm{tot},\kappa}$. 
Using the general definitions of the relative energy~\eqref{relen}, we find the following expression for the relative kinetic energy
\begin{equation}\label{relative kinetic energy}
    \begin{aligned}        
    \tilde{\mathcal{R}}_{\mathrm{kin}}(\rho,p|\tilde{\rho},\tilde{p})
    &=\frac{1}{2}\int_{\Omega}\frac{p^{2}}{\rho}-\frac{\tilde{p}^{2}}{\tilde{\rho}}-\frac{2\tilde{p}}{\tilde{\rho}}\left(p-\tilde{p}\right)+\frac{\tilde{p}^{2}}{\tilde{\rho}^{2}}\left(\rho-\tilde{\rho}\right) \dd{x}
    =\frac{1}{2}\int_{\Omega}\rho\abs{\frac{p}{\rho}-\frac{\tilde{p}}{\tilde{\rho}}}^{2}\dd{x}
    \\
    &=\int_{\Omega}\rho\frac{\abs{v-\tilde{v}}^{2}}{2} \dd{x}=:
    \mathcal{R}_{\mathrm{kin}}\left( \varphi;v|\tilde{v}\right)
    \end{aligned}
\end{equation}
with $ p = \rho v $ and  $ \tilde{p}  = \tilde{\rho}\tilde{ v} $.
One can see that
\begin{equation*}
    \mathcal{R}_{\mathrm{kin}}\left( \varphi;v|\tilde{v}\right)=\mathcal{E}_{\mathrm{kin}}(\varphi,v-\tilde{v}).
\end{equation*}
Moreover, since the elastic energy is convex, in particular quadratic, we define the relative elastic energy as:
\begin{equation}
    \mathcal{R}_{\mathrm{el}}(S|\tilde{S}):=\int_{\Omega}\frac{|S-\tilde{S}|^{2}}{2}\dd{x}=\mathcal{E}_{\mathrm{el}}(S-\tilde{S}).
\end{equation}
In addition, by \eqref{assumption on double well potential W}, the singular potential $W$ is $\kappa$-convex, i.e., $W_{\kappa}(\varphi)=W(\varphi)+\frac{\kappa}{2}\abs{\varphi}^{2}$ is convex. The gradient part of the phase-field energy is convex, in particular quadratic. According to~\eqref{subdifferential of phase-field energy}, we define the relative phase-field energy as 
\begin{equation}
\begin{aligned}
    \mathcal{R}_{\mathrm{pf},\kappa}(\varphi|\tilde{\varphi})&
    :=\mathcal{E}_{\mathrm{pf},\kappa}({\varphi})
    -\mathcal{E}_{\mathrm{pf},\kappa}(\tilde{\varphi})
    -\mathrm{D}\mathcal{E}_{\mathrm{pf},\kappa}(\tilde{\varphi})(\varphi-\tilde{\varphi})
    \\
    &=\varepsilon\int_{\Omega}\frac{\abs{\nabla\varphi-\nabla\tilde{\varphi}}^{2}}{2}\dd{x}+\varepsilon^{-1}\int_{\Omega}W_{\kappa}(\varphi)-W_{\kappa}(\tilde{\varphi})-W_{\kappa}(\tilde{\varphi})(\varphi-\tilde{\varphi})\dd{x}
\end{aligned}
\end{equation}
for all $\tilde{\varphi}\in \mathrm{dom}(\mathrm{D}\mathcal{E}_{\mathrm{pf},\kappa})$.

Altogether, we define the relative total energy as:
\begin{equation}
    \mathcal{R}\left(v,S,\varphi\big|\tilde{v},\tilde{S},\tilde{\varphi}\right):=
    \mathcal{R}_{\mathrm{kin}}\left( \varphi,v|\tilde{v}\right)
    +\mathcal{R}_{\mathrm{el}}(S|\tilde{S})
    +\mathcal{R}_{\mathrm{pf},\kappa}(\varphi|\tilde{\varphi}).
	\label{relative energy of two phase}
\end{equation}
We introduce the space of admissible test functions as
\begin{equation}\label{test function space for two phase}
	\scalebox{0.88}{$\mathfrak{T}:=\{(v,S,\varphi):
    v\in C_{0,\mathrm{div}}^{\infty}(\Omega\times[0,T)),S\in C_{0,\mathrm{sym,Tr}}^{\infty}(\Omega\times[0,T))\cap\dom{\mathcal{P}(\varphi;\cdot)},\varphi\in C_{0}^{\infty}(\Omega\times[0,T)),\abs{\varphi}<1\}.$}
\end{equation}
Notice that, in our setting, the spaces
\begin{equation}
    \begin{aligned}
        \mathbb{V}&:=L_{\mathrm{div}}^{2}(\Omega)\times L_{\mathrm{sym,Tr}}^{2}(\Omega)\times L^{2}(\Omega),\\
        \mathbb{Y}&:= H_{0,\mathrm{div}}^{1}(\Omega)\times H_{\mathrm{sym,Tr}}^{1}(\Omega)\times H^{1}(\Omega).
    \end{aligned}
\end{equation}

Now, the derivation of the relative energy inequality for system~\eqref{two phase system alternative} mostly  follows the general approach of~\eqref{relenineq}. 
However, in the definition~\eqref{introSysoper} of the system operator $\mathcal{A}_{\gamma}$ and~\eqref{reldis} of the relative dissipation $\mathcal{W}_{\gamma}^{(\mathcal{K})}$, we have to slightly deviate from the general form for the following reasons: The first difference arises from the non-smoothness of the plastic potential $\mathcal P$, which compels us to treat the multi-valued subdifferential $\partial\mathcal{P}(\varphi;S)$ separately, instead of directly including it in the system operator $\mathcal{A}_{\gamma}$ and in the relative dissipation $\mathcal{W}_{\gamma}^{(\mathcal{K})}$. More precisely, for this term we shall keep the variational inequality and add it to the relative energy inequality, see~\eqref{relative energy estimate for two phase problem} below. Another difference stems from the dependence of $\rho$ on $\varphi,$ which requires to use the momentum $p$ as a variable in the kinetic energy for convexity reasons, cf.\ \eqref{Ekin-tilde}, and for compensation, additional terms are created in the relative dissipation, see~\eqref{relative dissipation not explicit}.  
\par
In the following, we introduce $\mathcal{A}_{\gamma}$ and $\mathcal{W}_{\gamma}^{(\mathcal{K})}$ for system~\eqref{two phase system alternative} and discuss the difference to~\eqref{reldis} in more detail in Remark~\ref{why differ from general approach}.\\
\textbf{System operator $\mathcal{A}_{\gamma}$.} \begin{subequations}\label{system operator for two phase} Due to the non-smoothness of the dissipation potential $\mathcal{P}$, the multi-valued subdifferential $\partial\mathcal{P}(\varphi;S)$ will not be included in the system operator $\mathcal{A}_{\gamma}$. For the remaining terms we  follow~\eqref{introSysoper} and define the system operator 
\begin{equation}
    \mathcal{A}_{\gamma}=\left( \mathcal{A}^{(1)},\mathcal{A}_{\gamma}^{(2)},\mathcal{A}^{(3)}\right)^{\top}:\mathfrak{T}\to (H_{0,\mathrm{div}}^{1}(\Omega))^{\prime}\times(H_{\mathrm{sym,Tr}}^{1}(\Omega))^{\prime}\times(L^{2}(\Omega))^{\prime}    
\end{equation}
from~\eqref{two phase system alternative} as follows: 

\begin{equation}\label{operator A for two phase problem part one}
    \begin{aligned}
	\langle\mathcal{A}^{(1)}(\tilde{v},\tilde{S},\tilde{\varphi}),\Phi\rangle_{H_{0,\mathrm{div}}^{1}}
    :=
        &\int_{\Omega}\partial_{t}(\tilde{\rho}\tilde{v})\cdot\Phi
        -\tilde{\rho}\tilde{v}\otimes\tilde{v}:\nabla\Phi-\tilde{v}\otimes\tilde{J}:\nabla\Phi
        +\eta(\tilde{\varphi})\tilde{S}:\nabla\Phi
        \dd{x}
	\\
        &+\int_{\Omega}2\nu(\tilde{\varphi})\sym{\nabla\tilde{v}}:\nabla\Phi-\tilde{\mu}\nabla\tilde{\varphi}\cdot\Phi  \dd{x}-\langle f, \Phi\rangle_{H^{1}}
\end{aligned}
\end{equation}
for all $\Phi\in H_{0,\mathrm{div}}^{1}(\Omega)$;
\begin{equation}\label{operator A for two phase problem part two}
    \begin{aligned}
	\langle\mathcal{A}_{\gamma}^{(2)}(\tilde{v},\tilde{S},\tilde{\varphi}),\Psi\rangle_{H_{\mathrm{sym,Tr}}^{1}}:=
        &\int_{\Omega}\partial_{t}\tilde{S}:\Psi+\tilde{v}\cdot\nabla\tilde{S}:\Psi+\left( \tilde{S}\skw{\nabla\tilde{v}}-\skw{\nabla\tilde{v}}\tilde{S}\right):\Psi \dd{x}
	\\
	   &+\int_{\Omega}\gamma\nabla\tilde{S}\threedotsbin\nabla\Psi-\eta(\tilde{\varphi})\sym{\nabla\tilde{v}}:\Psi \dd{x}
\end{aligned}
\end{equation}
for all $\Psi\in H_{\mathrm{sym,Tr}}^{1}(\Omega)$;
\begin{align}
	\langle\mathcal{A}^{(3)}(\tilde{v},\tilde{S},\tilde{\varphi}),\zeta \rangle_{L^{2}}:=&\int_{\Omega}\partial_{t}\tilde{\varphi}\zeta+\tilde{v}\cdot\nabla\tilde{\varphi}\zeta-\Delta\tilde{\mu}\zeta \dd{x},
	\label{operator A for two phase problem part three}
\end{align}
for all $\zeta\in L^{2}(\Omega)$, and where 
\begin{align}
	\tilde{\mu}:=-\varepsilon\Delta \tilde{\varphi}+\varepsilon^{-1}W^{\prime}(\tilde{\varphi}).
\end{align}
\end{subequations}
\textbf{Relative dissipation.} Due to the non-smoothness of the plastic potential, we also do not include the multi-valued subdifferential $\partial\mathcal{P}(\varphi;S)$ into the relative dissipation. For the remaining terms and for a given regularity weight $\mathcal{K}$ as in \eqref{regularity weight}, we now introduce the relative dissipation following~\eqref{reldis}. However, as already mentioned, in order to ensure the convexity of the total energy, hence the positive definiteness of its second derivative, we have to use $\tilde{\mathcal{E}}_{\mathrm{tot},\kappa},$ cf.\  \eqref{Ekin-tilde} and below. To compensate this change of variables, two additional terms are created in the definition of the relative dissipation:
\begin{equation}\label{relative dissipation not explicit}
    \begin{aligned}
    \mathcal{W}_{\gamma}^{(\mathcal{K})}\left(v,S,\varphi|\tilde{v},\tilde{S},\tilde{\varphi}\right)
    :=&\mathcal{D}_{\mathrm{chs}}(v,\varphi,\mu)+\mathcal{D}_{\mathrm{sd},\gamma}(S)
    -\langle A_{\gamma}(v,S,\varphi,\mu),\mathrm{D}\tilde{\mathcal{E}}_{\mathrm{tot},\kappa}(\tilde{p},\tilde{S},\tilde{\varphi})\rangle_{\mathbb{Y}}
    \\
    &-\left\langle \mathcal{A}_{\gamma}(\tilde{v},\tilde{S},\tilde{\varphi}),\mathrm{D}^{2}\tilde{\mathcal{E}}_{\mathrm{tot},\kappa}(\tilde{p},\tilde{S},\tilde{\varphi})\begin{pmatrix}
             p  -\tilde {p} 
             \\ 
             S-\tilde {S} 
             \\ 
             \varphi - \tilde {\varphi} 
        \end{pmatrix} \right\rangle_{\mathbb{Y}}
    \\
    &+\mathcal{K}(\tilde{v},\tilde{S},\tilde{\varphi})\mathcal{R}(v,S,\varphi|\tilde{v},\tilde{S},\tilde{\varphi})\\
    &+\left\langle \mathcal{A}_\gamma (\tilde{ v},\tilde S,\tilde \varphi),\begin{pmatrix}
        \frac{\rho-\tilde\rho}{\tilde \rho }(v-\tilde v)\\ 0 \\ - \frac{\rho_2-\rho_1}{2}\tilde{v} \frac{\rho-\tilde\rho}{\tilde \rho }(v-\tilde v)
    \end{pmatrix}\right\rangle_{\!\!\!\mathbb{Y}}
    \\
    &+\int_{\Omega}(\rho-\tilde{\rho})(v-\tilde{v})\cdot\partial_{t}\tilde{v}\dd{x}\,,
    \end{aligned}
\end{equation}
where $\mathcal{A}_{\gamma}$ is defined as in~\eqref{system operator for two phase} and, using~\eqref{energies functionals}, we set 
\begin{equation*}
    \begin{aligned}
        \left\langle A_{\gamma}(v,S,\varphi,\mu),\mathrm{D}\tilde{\mathcal{E}}_{\mathrm{tot},\kappa}(\tilde{p},\tilde{S},\tilde{\varphi})\right\rangle_{\mathbb{Y}}:=
        \left\langle
        \begin{pmatrix}
            A^{(1)}(v,S,\varphi)\\
            A_{\gamma}^{(2)}(v,S,\varphi)\\
            A^{(3)}(v,\varphi)
        \end{pmatrix},
        \begin{pmatrix}
            \tilde{v}\\
            \tilde{S}\\
            \tilde{\mu}+\frac{\kappa}{\varepsilon}\tilde{\varphi}-\frac{\tilde{v}^{2}}{2}\cdot\frac{\rho_{2}-\rho_{1}}{2}
        \end{pmatrix}
        \right\rangle_{\!\!\!\mathbb{Y}}.
    \end{aligned}
\end{equation*}
In particular,
\begin{equation}
    \begin{split}
    \langle A^{(1)}(v,S,\varphi,\mu),\tilde{v}\rangle_{H_{0,\mathrm{div}}^{1}}:=&
        \int_{\Omega}
        -{\rho}{v}\otimes{v}:\nabla\tilde{v}-{v}\otimes{J}:\nabla\tilde{v}
        +\eta({\varphi}){S}:\nabla\tilde{v}
        \dd{x}
	\\
        &+\int_{\Omega}2\nu({\varphi})\sym{\nabla{v}}:\nabla\tilde{v}-{\mu}\nabla{\varphi}\cdot\tilde{v}  \dd{x}-\langle f, \tilde{v}\rangle_{H^{1}}\,,
    \label{opartor A without}
    \end{split}
\end{equation}
\begin{equation}
    \begin{split}
    \langle A_{\gamma}^{(2)}(v,S,\varphi,\mu),\tilde{S}\rangle_{H_{\mathrm{sym,Tr}}^{1}}:=&
        \int_{\Omega}
        -{S}\otimes{v}\threedotsbin\nabla\tilde{S}
        +\left( {S}\skw{\nabla{v}}-\skw{\nabla{v}}{S}\right):\tilde{S} \dd{x}
	\\
	   &+\int_{\Omega}
       \gamma\nabla{S}\threedotsbin\nabla\tilde{S}
       -\eta({\varphi})\sym{\nabla{v}}:\tilde{S} \dd{x}
    \\
    =&
    \int_{\Omega}
    -{S}\otimes{v}\threedotsbin\nabla\tilde{S}
    + 2{S}\skw{\nabla{v}}:\tilde{S} 
    +
       \gamma\nabla{S}\threedotsbin\nabla\tilde{S}
       -\eta({\varphi})\sym{\nabla{v}}:\tilde{S} \dd{x},
       \label{operator A without subdifferential}
\end{split}
\end{equation}
where we have used the fact that $\tilde{S}$ is symmetric, hence 
\begin{align*}
    \left( {S}\skw{\nabla{v}}-\skw{\nabla{v}}{S}\right):\tilde{S}=2{S}\skw{\nabla{v}}:\tilde{S},
\end{align*}
as well as
\begin{align}
    \langle A^{(3)}(v,S,\varphi,\mu),\tilde{\mu}+\frac{\kappa}{\varepsilon}\tilde{\varphi}-\frac{\rho_{2}-\rho_{1}}{2}\frac{\tilde{v}^{2}}{2}\rangle_{H^{1}}:=&
    \int_{\Omega}
        {v}\cdot\nabla{\varphi}(\tilde{\mu}+\frac{\kappa}{\varepsilon}\tilde{\varphi}-\frac{\rho_{2}-\rho_{1}}{2}\frac{\tilde{v}^{2}}{2})\dd{x}
    \nonumber\\
        &+\int_{\Omega}\nabla{\mu}:\nabla(\tilde{\mu}+\frac{\kappa}{\varepsilon}\tilde{\varphi}-\frac{\rho_{2}-\rho_{1}}{2}\frac{\tilde{v}^{2}}{2}) \dd{x}\,.
    \label{operator A without time derivative}
\end{align}
By collecting all the terms from~\eqref{relative dissipation not explicit}--\eqref{operator A without time derivative}, using the definition of the dissipation potentials from \eqref{dissipation functionals}, by adding and subtracting additional terms in order to create differences of the solution and the test function, and by applying integration by parts, we arrive at 
\begin{equation}
    \begin{aligned}    \mathcal{W}_{\gamma}^{(\mathcal{K})}\left(v,S,\varphi|\tilde{v},\tilde{S},\tilde{\varphi}\right):=
        &\int_{\Omega}\gamma\abs{\nabla S-\nabla\tilde{S}}^{2}\dd{x}
    +\int_{\Omega}\frac{1}{2}\abs{\nabla\mu-\nabla\tilde{\mu}}^{2}\dd{x}
    \\
    &+\int_{\Omega}2\nu(\varphi)\abs{\sym{\nabla v}-\sym{\nabla\tilde{v}}}^{2}
    +2( \nu(\varphi)-\nu(\tilde{\varphi}))\sym{\nabla\tilde{v}}:\left({\nabla v}-{\nabla\tilde{v}}\right)\dd{x}
    \\
        &+\int_{\Omega}\frac{1}{2}\abs{\nabla \mu}^{2}
    -\frac{1}{2}\abs{\nabla\tilde{\mu}}^{2}+ \Delta\tilde{\mu}\left(-\Delta(\varphi-\tilde{\varphi})+W^{\prime\prime}(\tilde{\varphi})(\varphi-\tilde{\varphi})\right)\dd{x}
    \\
        &+\int_{\Omega}(v-\tilde{v})\otimes(\rho v-\tilde{\rho}\tilde{v} + J-\tilde{J}):\nabla\tilde{v}
        +(\rho-\tilde{\rho})(v-\tilde{v})\cdot\partial_{t}\tilde{v}\dd{x}
    \\
        &-\int_{\Omega}(\eta(\varphi)-\eta(\tilde{\varphi}))(S-\tilde{S}):\nabla\tilde{v}
		+(\eta(\varphi)-\eta(\tilde{\varphi}))\tilde{S}:({\nabla v}-{\nabla\tilde{v}})\dd{x}
	\\
		&-\int_{\Omega}(S-\tilde{S})\otimes(v-\tilde{v})\threedotsbin\nabla\tilde{S} 
        +2 
        (S-\tilde{S})\skw{\nabla v-\nabla\tilde{v}}
        :{\tilde{S}} \dd{x}
	\\
		&-\int_{\Omega}\tilde{\mu}(\nabla\varphi-\nabla\tilde{\varphi})\cdot(v-\tilde{v})
		-\varepsilon(\nabla\varphi-\nabla\tilde{\varphi})\otimes(\nabla\varphi-\nabla\tilde{\varphi}):\nabla\tilde{v} \dd{x}
	\\
		&+\int_{\Omega}\frac{\kappa}{\varepsilon}(\nabla\mu-\nabla\tilde{\mu})\cdot(\nabla\varphi-\nabla\tilde{\varphi})
		+\frac{\kappa}{\varepsilon}(v-\tilde{v})\cdot\nabla\tilde{\varphi}(\varphi-\tilde{\varphi}) \dd{x}
	\\
		&+\mathcal{K}(\tilde{v},\tilde{S},\tilde{\varphi})\mathcal{R}(v,S,\varphi|\tilde{v},\tilde{S},\tilde{\varphi})
    \\
    =&\mathcal{D}_{\mathrm{sd},\gamma}(S-\tilde{S})
    +\mathcal{W}_{0}^{(\mathcal{K})}(v,S,\varphi|\tilde{v},\tilde{S},\tilde{\varphi}).
    \label{dissipation function like quantity for two phase}
\end{aligned}
\end{equation}

\begin{remark}\label{why differ from general approach}
    In order to see the connection of~\eqref{relative dissipation not explicit} and~\eqref{reldis}, in particular, how the two additional terms arise in \eqref{relative dissipation not explicit}, we calculate the second derivative of the adapted energy $ {\mathcal{E}}_{\text{tot},\kappa} (p , S , \varphi):= {\mathcal{E}}_{\text{kin}} (\varphi, p ) + \mathcal{E}_{\text{el}}(S) + \mathcal E_{\text{pf},\kappa}(\varphi)$ such that we observe  that
    \begin{equation*}
        \begin{aligned}
        \mathrm{D}^2{\mathcal{E}}_{\text{tot},\kappa}( \tilde{p} ,\tilde{S} , \tilde{\varphi}) 
        \begin{pmatrix}
             p  -\tilde {p} 
             \\ 
             S-\tilde {S} 
             \\ 
             \varphi - \tilde {\varphi} 
        \end{pmatrix} 
        =& \begin{pmatrix}
            \frac{1}{\tilde{\rho}} I & 0 & -\frac{\rho_2-\rho_1}{2\tilde{\rho}^{2}} \tilde {p} 
            \\
            0 & I & 0 
            \\
            -\frac{\rho_2-\rho_1}{2\tilde{\rho}^{2}} \tilde {p}  & 0 & - \Delta + W''(\tilde{\varphi}) + \frac{\kappa}{\varepsilon} {+\frac{|\tilde p|^2}{\tilde{\rho}^3}\rho'(\tilde{\varphi})^2}
        \end{pmatrix} \begin{pmatrix}
             p  - \tilde{p}  
             \\ 
             S-\tilde{S} 
             \\ 
             \varphi - \tilde{\varphi} 
        \end{pmatrix} 
        \\
        =&\begin{pmatrix}
            \frac{\rho}{\tilde{\rho}}(v-\tilde{v})
             \\ 
             S-\tilde{S} 
             \\ 
            (- \Delta + W''(\tilde{\varphi}) + \frac{\kappa}{\varepsilon} ) (\varphi - \tilde{\varphi})
            -\frac{\rho_{2}-\rho_{1}}{2}\frac{\rho}{\tilde{\rho}}{\tilde{v}\cdot(v-\tilde{v})}
        \end{pmatrix}. 
    \end{aligned}
    \end{equation*}
    By following the abstract approach of~\eqref{relenineq} and by  adding and subtracting the term
    \begin{equation*}
        \left \langle \mathcal{A}_\gamma (\tilde{ v},\tilde S,\tilde \varphi),\begin{pmatrix}
        \frac{\rho-\tilde\rho}{\tilde \rho }(v-\tilde v)\\ 0 \\ - \frac{\rho_2-\rho_1}{2}\tilde{v} \frac{\rho-\tilde\rho}{\tilde \rho }(v-\tilde v)
    \end{pmatrix}\right \rangle_{\!\!\!\mathbb{Y}},
    \end{equation*}
    we arrive at the formulation
    \begin{equation}
        \left\langle \mathcal{A}_{\gamma}(\tilde{v},\tilde{S},\tilde{\varphi}),
			\begin{pmatrix}
				v-\tilde{v}\\
				S-\tilde{S}\\
				-\Delta(\varphi-\tilde{\varphi})+W^{\prime\prime}(\tilde{\varphi})(\varphi-\tilde{\varphi})+\frac{\kappa}{\varepsilon}(\varphi-\tilde{\varphi})-\frac{\rho_{2}-\rho_{1}}{2}(v-\tilde{v})\cdot\tilde{v}
			\end{pmatrix}
		\right\rangle_{\!\!\!\mathbb{Y}} .
    \end{equation}
    Notice that this substitution gives an extra term
    \begin{align*}
    &\int_{\Omega}\partial_{t}(\tilde{\rho}\tilde{v})
    (-\frac{\rho}{\tilde{\rho}}(v-\tilde{v})+(v-\tilde{v}))+\partial_{t}\tilde{\varphi}(\frac{\rho_{2}-\rho_{1}}{2}\frac{\rho}{\tilde{\rho}}\tilde{v}(v-\tilde{v})-\frac{\rho_{2}-\rho_{1}}{2}\tilde{v}(v-\tilde{v}))\dd{x}
    \\
    =&\int_{\Omega}-(\rho-\tilde{\rho})(v-\tilde{v})\partial_{t}\tilde{v}\dd{x},
\end{align*}
which also contributes to $\mathcal{W}_{\gamma}^{(\mathcal{K})}$ in \eqref{dissipation function like quantity for two phase} and thus, to the definition of dissipative solutions.

Following here the original definition of dissipative solutions according to Lions~\cite[Sec.~4.4]{lionsMathematicalTopicsFluid1996}, the solution fulfills the relative energy inequality for all smooth enough test functions. 
There are other solvability concepts that are coined dissipative in the literature. For instance certain measure-valued solutions (e.g.,~\cite{basaric}) or essentially weak solutions~\cite{feireisl2014relative}. 
\end{remark}

Now, we can introduce the dissipative solution for system~\eqref{two phase system alternative} according to Definition~\ref{dissipative solution for general system}. 

\begin{defi}[Dissipative solution for system~\eqref{two phase system alternative}]\label{dissipative solution for two phase problem}
    Let the assumptions~\ref{assumption on Omega}-\ref{TP assumption} hold true. Let $\mathcal{K}$ be a regularity weight satisfying~\eqref{regularity weight}. A quadruplet $(v,S,\varphi,\mu)$ is called a dissipative solution of type $\mathcal{K}$ of the two-phase system \eqref{two phase system alternative} if the following properties are satisfied:\\
    \begin{subequations}
    1. The quadruplet $(v,S,\varphi,\mu)$ has the following regularity:
    \begin{equation}
    \begin{aligned}    
		v&\in L_{\mathrm{loc}}^{\infty}([0,T),L_{\mathrm{div}}^{2}(\Omega))\cap L_{\mathrm{loc}}^{2}([0,T),H_{0,\mathrm{div}}^{1}(\Omega)),\\
		S&\in L_{\mathrm{loc}}^{\infty}([0,T),L_{\mathrm{sym,Tr}}^{2}(\Omega)),\\
		\varphi&\in L_{\mathrm{loc}}^{\infty}([0,T);H^{1}(\Omega))\cap L_{\mathrm{loc}}^{2}([0,T);H^{2}(\Omega)), W^{\prime}(\varphi)\in L_{\mathrm{loc}}^{2}([0,T);L^{2}(\Omega)),\\
		\mu&\in L_{\mathrm{loc}}^{2}([0,T);H^{1}(\Omega)).
    \end{aligned}
    \end{equation}
    2. With the relative energy $\mathcal{R}$ from~\eqref{relative energy of two phase}, the system operator $\mathcal{A}_{\gamma}$ from~\eqref{system operator for two phase}, and the relative dissipation $\mathcal{W}_{\gamma}^{(\mathcal{K})}$ from~\eqref{dissipation function like quantity for two phase}, the quadruplet $(v,S,\varphi,\mu)$ satisfies the following relative energy-dissipation estimate:
    \begin{equation}\label{relative energy estimate for two phase problem}
    \begin{aligned}
			&\mathcal{R}(v(t),S(t),\varphi(t)|\tilde{v}(t),\tilde{S}(t),\tilde{\varphi}(t))
	\\
			+\int_{0}^{t}\Big(& \left\langle \mathcal{A}_{\gamma}(\tilde{v},\tilde{S},\tilde{\varphi}),
			\begin{pmatrix}
				v-\tilde{v}\\
				S-\tilde{S}\\
				-\Delta(\varphi-\tilde{\varphi})+W^{\prime\prime}(\tilde{\varphi})(\varphi-\tilde{\varphi})+\frac{\kappa}{\varepsilon}(\varphi-\tilde{\varphi})-\frac{\rho_{2}-\rho_{1}}{2}(v-\tilde{v})\cdot\tilde{v}
			\end{pmatrix}
			\right\rangle_{\!\!\!\mathbb Y} 
		\\
			&+\mathcal{P}(\varphi;S)-\mathcal{P}(\varphi;\tilde{S})+\mathcal{W}_{\gamma}^{(\mathcal{K})}(v,S,\varphi|\tilde{v},\tilde{S},\tilde{\varphi})\Big)
			\exp\left(\int_{s}^{t} \mathcal{K}(\tilde{v},\tilde{S},\tilde{\varphi})\dd{\tau} \right) \dd{s}
		\\
			\leq &\mathcal{R}(v_{0},S_{0},\varphi_{0}|\tilde{v}(0),\tilde{S}(0),\tilde{\varphi}(0)) \exp\left(\int_{0}^{t} \mathcal{K}(\tilde{v},\tilde{S},\tilde{\varphi}) \dd{s} \right)
    \end{aligned}
    \end{equation}
    for all $(\tilde{v},\tilde{S},\tilde{\varphi})\in\mathfrak{T}$ and for a.e. $t\in(0,T)$.
    \end{subequations}
\end{defi}

   


\begin{remark}
    One can still obtain an inequality for the relative energy even without the regularity weight term, but this term turns out to be crucial when carrying out the limit passage $\gamma\to0$ for the relative dissipation. Recall that, when we pass to the limit $\gamma\to0$, we will lose the control on the term $\nabla S$ which is essential for the limit passage of the nonlinear terms $S\skw{\nabla v}-\skw{\nabla v}S$. However, with the help of the chosen regularity weights, $\mathcal{W}_{\gamma}^{(\mathcal{K})}$ can be convex and even continuous and therefore weakly lower semicontinuous. This will allow us to perform the limit passage.

    For notational simplicity, we assume without loss of generality that $\varepsilon=1$.
\end{remark}

\subsection{Properties of the dissipative solutions}
We first provide a lemma that will be useful for passing to the limit in energy inequalities.
\begin{lem}[{\cite[][Lemma 2.2]{eiterWeakstrongUniquenessEnergyvariational2022b}}]\label{result related to energy estimate}
	Let $g_0\in\mathbb{R}$. Let $f\in L^{1}(0,T)$ and $g\in L^{\infty}(0,T)$. Then the following two inequalities are equivalent:
	\begin{equation}\label{result related to energy estimate 1}
		-\int_{0}^{T}\phi^{\prime}(t)g(t)\dd{t}+\int_{0}^{T}\phi(t)f(t)\dd{t}\leq g_0,\mbox{ for all }\phi\in\tilde{C}([0,T]),
	\end{equation}
	where 
    \begin{equation}\label{tilde C space}
        \tilde{C}([0,T]):=\left\{\phi\in C^{1}([0,T]):\phi\geq0\mbox{ on }[0,T],\phi^{\prime}\leq0,\phi(0)=1,\phi(T)=0\right\},    
    \end{equation}
    and
	\begin{equation}\label{result related to energy estimate 2}
		g(t)+\int_{0}^{t}f(s)\dd{s}\leq g_0\mbox{ for a.e. }t\in(0,T).
	\end{equation} 
\end{lem}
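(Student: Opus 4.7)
The plan is to prove each of the two implications separately. The direction \eqref{result related to energy estimate 2}$\Rightarrow$\eqref{result related to energy estimate 1} follows by a direct integration (multiplication by $-\phi'\geq 0$ and Fubini), while the converse \eqref{result related to energy estimate 1}$\Rightarrow$\eqref{result related to energy estimate 2} rests on an approximation of the characteristic function $\chi_{[0,t]}$ by admissible test functions in $\tilde{C}([0,T])$, combined with the Lebesgue differentiation theorem.

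\textbf{From \eqref{result related to energy estimate 2} to \eqref{result related to energy estimate 1}.} Fix $\phi\in\tilde{C}([0,T])$ and set $G(t):=g(t)+\int_{0}^{t}f(s)\dd{s}$. By assumption $G(t)\leq g_{0}$ for a.e.\ $t\in(0,T)$, and since $-\phi'(t)\geq 0$, integration over $(0,T)$ yields
\begin{equation*}
\int_{0}^{T}(-\phi'(t))G(t)\dd{t}\leq g_{0}\int_{0}^{T}(-\phi'(t))\dd{t}=g_{0}\bigl(\phi(0)-\phi(T)\bigr)=g_{0}.
\end{equation*}
By Fubini's theorem, the double integral $\int_{0}^{T}(-\phi'(t))\int_{0}^{t}f(s)\dd{s}\dd{t}$ can be rewritten as $\int_{0}^{T}f(s)\bigl(\phi(s)-\phi(T)\bigr)\dd{s}=\int_{0}^{T}f(s)\phi(s)\dd{s}$, so the inequality above is precisely~\eqref{result related to energy estimate 1}.

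\textbf{From \eqref{result related to energy estimate 1} to \eqref{result related to energy estimate 2}.} For a fixed $t\in(0,T)$ and small $\varepsilon>0$ with $t+\varepsilon<T$, I would construct an explicit approximation $\phi_{\varepsilon}\in\tilde{C}([0,T])$ of $\chi_{[0,t]}$, e.g.,
\begin{equation*}
\phi_{\varepsilon}(s):=
\begin{cases}
1,&s\in[0,t],\\
\tfrac{1}{2}\bigl(1+\cos(\pi(s-t)/\varepsilon)\bigr),&s\in[t,t+\varepsilon],\\
0,&s\in[t+\varepsilon,T],
\end{cases}
\end{equation*}
which is $C^{1}$, non-negative, non-increasing, and satisfies $\phi_{\varepsilon}(0)=1$, $\phi_{\varepsilon}(T)=0$. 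The density $-\phi_{\varepsilon}'$ is non-negative, supported in $[t,t+\varepsilon]$, and integrates to $1$. Plugging $\phi_{\varepsilon}$ into~\eqref{result related to energy estimate 1} and passing to the limit $\varepsilon\to 0$, the term $\int_{0}^{T}\phi_{\varepsilon}(s)f(s)\dd{s}$ converges to $\int_{0}^{t}f(s)\dd{s}$ by dominated convergence (as $\phi_{\varepsilon}\to\chi_{[0,t]}$ pointwise with $|\phi_{\varepsilon}|\leq 1$ and $f\in L^{1}(0,T)$), while the weighted average $-\int_{0}^{T}\phi_{\varepsilon}'(s)g(s)\dd{s}=\int_{t}^{t+\varepsilon}(-\phi_{\varepsilon}'(s))g(s)\dd{s}$ converges to $g(t)$ for every Lebesgue point $t$ of $g$.

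\textbf{Main obstacle.} The only delicate step is the convergence of the weighted average of $g$ to $g(t)$: since $\|\phi_{\varepsilon}'\|_{L^{\infty}}\sim 1/\varepsilon$ blows up, a crude continuity argument is not available. Decomposing
\begin{equation*}
\int_{t}^{t+\varepsilon}(-\phi_{\varepsilon}'(s))g(s)\dd{s}
=\int_{t}^{t+\varepsilon}(-\phi_{\varepsilon}'(s))\bigl(g(s)-g(t)\bigr)\dd{s}+g(t),
\end{equation*}
the first integral is bounded by $C\varepsilon^{-1}\int_{t}^{t+\varepsilon}|g(s)-g(t)|\dd{s}$, which vanishes at every right-Lebesgue point of $g$; since $g\in L^{\infty}(0,T)\subseteq L_{\mathrm{loc}}^{1}(0,T)$, this covers a.e.\ $t\in(0,T)$. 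Intersecting this full-measure set with the one on which the passage to the limit is valid yields the pointwise a.e.\ inequality~\eqref{result related to energy estimate 2}, completing the proof.
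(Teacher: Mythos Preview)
Your proof is correct and essentially fills in the standard argument that the paper omits by simply citing \cite[Lemma~2.2]{eiterWeakstrongUniquenessEnergyvariational2022b}. Both directions are handled exactly as one would expect: Fubini for the easy direction, and approximation of $\chi_{[0,t]}$ by admissible $\phi_\varepsilon$ combined with the Lebesgue differentiation theorem for the converse.
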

\begin{proof}
	See \cite[][Lemma 2.2]{eiterWeakstrongUniquenessEnergyvariational2022b}. Notice that this proof remains valid for $g$ negative.
\end{proof}

We now investigate how dissipative solutions from Definition \ref{dissipative solution for two phase problem} and weak solutions from Definition \ref{gerneralized solution TPP} are connected. The first result shows that a weak solution is also a dissipative solution for arbitrary regularity weight $\mathcal{K}$, given that $\gamma>0$.

\begin{prop}\label{generalized solution is dissipative solution for TPP}
	For $\gamma>0$, let $(v,S,\varphi,\mu)$ be a weak solution in the sense of Definition \ref{gerneralized solution TPP} and let $\mathcal{K}$ satisfy \eqref{regularity weight}. Then $(v,S,\varphi,\mu)$ satisfies the relative energy estimate \eqref{relative energy estimate for two phase problem} for a.e. $t\in(0,T)$, any given regularity weight $\mathcal{K}$ and all $(\tilde{v},\tilde{S},\tilde{\varphi})\in \mathfrak{T}$. Hence, $(v,S,\varphi,\mu)$ is also a dissipative solution.
\end{prop}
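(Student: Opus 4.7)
The plan is to derive the relative energy–dissipation estimate \eqref{relative energy estimate for two phase problem} from the partial energy inequalities \eqref{partial energy inequality kinetic}, \eqref{partial energy inequality phase field}, and the evolutionary variational inequality \eqref{generalized solution stress energy estimate TPP} associated to the weak solution, combined with the equations (in their distributional form) tested against quantities built from the smooth test functions $(\tilde v,\tilde S,\tilde\varphi)\in\mathfrak{T}$. First, I would recover the absolute energy–dissipation estimate \eqref{total energy estimate for generalized solution for TPP} by summing \eqref{partial energy inequality kinetic}, \eqref{partial energy inequality phase field} and \eqref{generalized solution stress energy estimate TPP} with the choice $\tilde S\equiv 0$. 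This provides the bound for $\mathcal{E}_{\mathrm{tot}}(v(t),S(t),\varphi(t))$ and the full dissipation $\mathcal{D}_{\mathrm{chs}}+\mathcal{D}_{\mathrm{sd},\gamma}$ on the left-hand side.

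Next, I would expand the relative energy
\[
\mathcal{R}(v,S,\varphi\,|\,\tilde v,\tilde S,\tilde\varphi)=\mathcal{E}_{\mathrm{tot},\kappa}(v,S,\varphi)-\mathcal{E}_{\mathrm{tot},\kappa}(\tilde v,\tilde S,\tilde\varphi)-\langle\mathrm{D}\mathcal{E}_{\mathrm{tot},\kappa}(\tilde v,\tilde S,\tilde\varphi),\ldots\rangle
\]
using the definitions from \eqref{relative kinetic energy} and below, and compute its time evolution. The mixed terms such as $\int_\Omega \rho v\cdot\tilde v\,\mathrm{d}x$, $\int_\Omega S:\tilde S\,\mathrm{d}x$ and $\langle\mathrm{D}\mathcal{E}_{\mathrm{pf},\kappa}(\tilde\varphi),\varphi-\tilde\varphi\rangle$ are to be differentiated by applying the weak formulations \eqref{weak velocity TPP}, \eqref{weak first CH TPP}–\eqref{weak second CH TPP} with test functions $\tilde v$, resp.\ $\tilde\mu+\tfrac{\kappa}{\varepsilon}\tilde\varphi-\tfrac{\rho_2-\rho_1}{4}|\tilde v|^2$, while the stress contribution is generated from the variational inequality \eqref{generalized solution stress energy estimate TPP} tested with $\tilde S$. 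In parallel, the time derivatives of the quadratic terms in $\tilde v,\tilde S,\tilde\varphi$ are expressed through the system operator $\mathcal{A}_\gamma$ applied to $(\tilde v,\tilde S,\tilde\varphi)$, which is legitimate by the smoothness of the test functions.

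Collecting these identities and inserting the definition \eqref{system operator for two phase} of $\mathcal{A}_\gamma$, terms reorganize—after adding and subtracting the auxiliary term identified in Remark~\ref{why differ from general approach}—into precisely the combination appearing under the integral in \eqref{relative energy estimate for two phase problem}: the duality pairing with $\mathcal{A}_\gamma(\tilde v,\tilde S,\tilde\varphi)$, the difference $\mathcal{P}(\varphi;S)-\mathcal{P}(\varphi;\tilde S)$ (this is the point where the non-smooth potential enters, exactly as kept on the left-hand side of \eqref{generalized solution stress energy estimate TPP}), and the terms constituting $\mathcal{W}_\gamma^{(0)}$ as rewritten in \eqref{dissipation function like quantity for two phase}. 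The dissipation $\mathcal{D}_{\mathrm{chs}}(v,\varphi,\mu)+\mathcal{D}_{\mathrm{sd},\gamma}(S)$ obtained from the energy estimate combines with the tested terms to yield the squared differences $|\nabla S-\nabla\tilde S|^2$, $|\nabla\mu-\nabla\tilde\mu|^2$, and $|\mathrm{sym}(\nabla v-\nabla\tilde v)|^2$ after completing the squares.

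Finally, since $\mathcal{K}(\tilde v,\tilde S,\tilde\varphi)\mathcal{R}(v,S,\varphi|\tilde v,\tilde S,\tilde\varphi)$ is a non-negative term, I would add $\mathcal{K}\,\mathcal{R}$ to both sides of the resulting inequality to match the definition \eqref{dissipation function like quantity for two phase} of $\mathcal{W}_\gamma^{(\mathcal{K})}$, and then apply Gronwall's inequality with the weight $\mathcal{K}(\tilde v,\tilde S,\tilde\varphi)$ to obtain the exponential factors in \eqref{relative energy estimate for two phase problem}. To pass from a test-function-in-time formulation to the pointwise-in-$t$ formulation, I would invoke Lemma~\ref{result related to energy estimate}. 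The main obstacle lies in the careful bookkeeping of all the non-standard terms arising from the $\varphi$-dependence of $\rho$: one must ensure the extra contributions $\int_\Omega(\rho-\tilde\rho)(v-\tilde v)\cdot\partial_t\tilde v\,\mathrm{d}x$ and the modified pairing with $-\tfrac{\rho_2-\rho_1}{2}(v-\tilde v)\cdot\tilde v$ in the phase-field slot appear with the correct sign, consistent with the derivation in Remark~\ref{why differ from general approach}, which is what makes the resulting $\mathcal{W}_\gamma^{(\mathcal{K})}$ consistent with the definition on a test-function level.
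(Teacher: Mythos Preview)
Your outline is correct and follows essentially the same route as the paper: sum the three partial inequalities \eqref{partial energy inequality kinetic}, \eqref{generalized solution stress energy estimate TPP}, \eqref{partial energy inequality phase field}, test the weak formulations \eqref{weak velocity TPP} and \eqref{weak first CH TPP}--\eqref{weak second CH TPP} with quantities built from $(\tilde v,\tilde S,\tilde\varphi)$, collect everything into the form \eqref{relative energy disssipation estimate not yet finished}, and absorb the regularity weight via Gronwall. The only stylistic difference is that the paper works from the start in the weak-in-time form with an auxiliary $\phi\in\tilde C([0,t])$ (cf.\ Lemma~\ref{result related to energy estimate}) and realizes the Gronwall step by the substitution $\phi(t)=\psi(t)\exp\bigl(-\int_0^t\mathcal{K}(\tilde v,\tilde S,\tilde\varphi)\,\mathrm{d}\tau\bigr)$ rather than ``adding $\mathcal{K}\mathcal{R}$ to both sides''; this has the advantage of avoiding any pointwise time differentiation of $\mathcal{R}$, which at the regularity level of weak solutions is not directly available.
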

\begin{proof}
	Let $(\tilde{v},\tilde{S},\tilde{\varphi})\in \mathfrak{T}$ and for a.e. $t\in(0,T)$, 
    let $\phi\in \tilde{C}([0,t])$ cf.~\eqref{tilde C space}. Since $(v,S,\varphi,\mu)$ is a weak solution in the sense of Definition \ref{gerneralized solution TPP}, it satisfies the weak formulation \eqref{weak velocity TPP}. Besides, since $\tilde{v}\in C_{0,\mathrm{div}}^{\infty}(\Omega\times[0,T))$ is admissible test function for \eqref{weak velocity TPP}. Therefore, we choose the test functions in the weak formulation to be $-\phi\tilde{v}$, in order to obtain that
	\begin{equation}\label{weak formulation tested by velocity}
	    \begin{aligned}
		-&\int_{0}^{t}\phi^{\prime} \int_{\Omega}-\rho v\cdot \tilde{v} \dd{x} \dd{s}
		\\
		+&\int_{0}^{t}\phi \int_{\Omega}\rho v\cdot\partial_{t}\tilde{v}+(\rho v\otimes v):\nabla\tilde{v}-2\nu(\varphi)\sym{\nabla v}:\nabla\tilde{v} \dd{x}\dd{s}
		\\
		-&\int_{0}^{t}\phi\int_{\Omega}\eta(\varphi)S:\nabla\tilde{v}-(v\otimes J):\nabla\tilde{v}-\mu\nabla\varphi\cdot\tilde{v} \dd{x} \dd{s}
        +\int_{0}^{t}\phi\langle f, \tilde{v}\rangle_{H^{1}}\dd{s}
		=\int_{\Omega}-\rho_{0}v_{0}\cdot\tilde{v}(0) \dd{x}.
	\end{aligned}
	\end{equation}
	Furthermore, with the help of Lemma \ref{result related to energy estimate}, the partial energy inequalities \eqref{partial energy inequality kinetic} and \eqref{partial energy inequality phase field} can be equivalently written as
    \begin{equation}\label{weak partial energy inequality kinetic}
        \begin{aligned}
        -&\int_{0}^{t}\phi^{\prime}\int_{\Omega}\frac{\rho(t)}{2}\abs{v(t)}^{2}\dd{x}\dd{s}
        +\int_{0}^{t}\phi\int_{\Omega}2\nu(\varphi)\abs{\sym{\nabla v}}^{2}
        +\eta(\varphi)S:\nabla v 
        -\mu(\nabla\varphi\cdot v) \dd{x}\dd{s}
        \\
        \leq&\int_{\Omega}\frac{\rho_{0}}{2}\abs{v_{0}}^{2} \dd{x}
        +\int_{0}^{t}\phi\langle f, v\rangle_{H^{1}}\dd{s},
    \end{aligned}
    \end{equation}
    as well as 
     \begin{equation}\label{weak partial energy inequality phase field}
         \begin{aligned}
        -&\int_{0}^{t}\phi^{\prime}\int_{\Omega}\frac{\abs{\nabla\varphi(t)}^{2}}{2}+W(\varphi(t))\dd{x}\dd{s}
        +\int_{0}^{t}\phi\int_{\Omega}\abs{\nabla\mu}^{2}
        +\mu(\nabla \varphi\cdot v)\dd{x}\dd{s}
        \\
        \leq&\int_{\Omega}\frac{\abs{\nabla\varphi_{0}}^{2}}{2}+W(\varphi_{0})\dd{x},
    \end{aligned}
    \end{equation}
    Besides, by Lemma \ref{result related to energy estimate}, the evolutionary variational inequality for the stress \eqref{generalized solution stress energy estimate TPP} can be also transformed into the following
    \begin{equation}\label{weak relative energy inequality stress}
        \begin{aligned}
		-&\int_{0}^{t}\phi^{\prime}\int_{\Omega}\frac{1}{2}\abs{ S(t)-\tilde{S}(t)}^{2} \dd{x}\dd{s}
        \\
		&+\int_{0}^{t}\phi\int_{\Omega} \partial_{t}\tilde{S}:(S-\tilde{S}) - v\cdot\nabla S:\tilde{S} - (S\skw{\nabla v}-\skw{\nabla v}S):\tilde{S}\dd{x} \dd{s}
		\\
		&+\int_{0}^{t}\phi(\mathcal{P}(\varphi;S)-\mathcal{P}(\varphi;\tilde{S})) \dd{s}
		+\int_{0}^{t}\phi\int_{\Omega}\gamma\nabla S\threedotsbin\nabla(S-\tilde{S}) - \eta(\varphi)\sym{\nabla v}:(S-\tilde{S})\dd{x} \dd{s} 
		\\
		\leq&\int_{\Omega}\frac{1}{2} \abs{S_{0}-\tilde{S}(0)}^{2}\dd{x}
	\end{aligned}
    \end{equation}
	Moreover, by integration-by-parts in time and space, we calculate the terms in the relative phase-field energy as follows:
    \begin{equation}\label{weak formualtion tested phase field energy}
        \begin{aligned}
		-&\int_{0}^{t}\phi^{\prime} \int_{\Omega}-\nabla\varphi:\nabla\tilde{\varphi}-W^{\prime}(\tilde{\varphi})(\varphi-\tilde{\varphi})-2W(\tilde{\varphi}) \dd{x} \dd{s}
		\\
		=&\int_{\Omega}-\nabla\varphi_{0}:\nabla\tilde{\varphi}(0)-W^{\prime}(\tilde{\varphi}(0))(\varphi_{0}-\tilde{\varphi}(0))-2W(\tilde{\varphi}(0)) \dd{x}
		\\
		&-\int_{0}^{t}\phi \int_{\Omega}
        \nabla\partial_{t}\tilde{\varphi}:\nabla\varphi
        -\partial_{t}\varphi\Delta\tilde{\varphi}
        +W^{\prime}(\tilde{\varphi})(\partial_{t}\varphi-\partial_{t}\tilde{\varphi})
        +W^{\prime\prime}(\tilde{\varphi})\partial_{t}\tilde{\varphi}(\varphi-\tilde{\varphi})
        +2W^{\prime}(\tilde{\varphi})\partial_{t}\tilde{\varphi}
        \dd{x} \dd{s},
	\end{aligned}
    \end{equation}
	where we used that $\phi(0)=1$ and $\phi(t)=0$ for $\phi\in\tilde{C}([0,t])$ and the Neumann boundary condition \eqref{two phase system alternative neumann boundary}.
	Besides, by testing \eqref{weak first CH TPP} with $\phi\tilde{\mu}$ and an integration by parts in time, we obtain
	\begin{equation}
		\int_{0}^{t}\phi \int_{\Omega}\partial_{t}\varphi\tilde{\mu}+v\cdot\nabla\varphi\tilde{\mu}+\nabla\mu:\nabla\tilde{\mu} \dd{x} \dd{s}=0.
		\label{weak formulation tested by mu}
	\end{equation}
	By testing~\eqref{weak second CH TPP} with $-\phi\partial_{t}\tilde{\varphi}$ and integrating in time and space, we derive
	\begin{equation}
		-\int_{0}^{t}\phi \int_{\Omega}\mu\partial_{t}\tilde{\varphi}-W^{\prime}(\varphi)\partial_{t}\tilde{\varphi}+\Delta\varphi\partial_{t}\tilde{\varphi} \dd{x} \dd{s} =0.
		\label{weak formualtion tested by time derivative varphi}
	\end{equation}
	Summing~\eqref{weak formulation tested by mu} and~\eqref{weak formualtion tested by time derivative varphi} implies 
	\begin{equation}
	    \begin{aligned}
		-&\int_{0}^{t}\phi \int_{\Omega}\nabla\mu:\nabla\tilde{\mu} \dd{x} \dd{s}
		\\
		=&\int_{0}^{t}\phi \int_{\Omega}\partial_{t}\varphi\tilde{\mu}+v\cdot\nabla\varphi\tilde{\mu}-\mu\partial_{t}\tilde{\varphi}+W^{\prime}(\varphi)\partial_{t}\tilde{\varphi}+\nabla\varphi\cdot\nabla\partial_{t}\tilde{\varphi} \dd{x} \dd{s}.
		\label{weak formulation nabla mu}
	\end{aligned}
	\end{equation}
	In addition, we can calculate the contribution of non-convexity of the singular potential, i.e.
	\begin{equation}\label{weak formulation nonconvexity not yet finished}
	    \begin{aligned}
		-&\kappa\int_{0}^{t}\phi^{\prime} \int_{\Omega}\frac{\abs{\varphi-\tilde{\varphi}}^{2}}{2} \dd{x} \dd{s}
		\\
		=&\kappa\int_{\Omega}\frac{\abs{\varphi_{0}-\tilde{\varphi}(0)}^{2}}{2} \dd{x}
		+\kappa\int_{0}^{t}\phi \int_{\Omega}(\varphi-\tilde{\varphi})(\partial_{t}\varphi-\partial_{t}\tilde{\varphi}) \dd{x} \dd{s}.
	\end{aligned}
	\end{equation}
	Notice that, with the help of~\eqref{weak first CH TPP}, the second term on the right-hand side of~\eqref{weak formulation nonconvexity not yet finished} can be written as
	\begin{equation*}
	    \begin{aligned}
		&\int_{\Omega}-\kappa(\varphi-\tilde{\varphi})(\partial_{t}\varphi-\partial_{t}\tilde{\varphi}) \dd{x}
		\\
		=&\int_{\Omega}\kappa(\nabla\mu-\nabla\tilde{\mu})\cdot(\nabla\varphi-\nabla\tilde{\varphi})\dd{x}
		\\
		&+\int_{\Omega}\kappa\left( v\cdot(\nabla\varphi-\nabla\tilde{\varphi})+(v-\tilde{v})\cdot\nabla\tilde{\varphi}\right)(\varphi-\tilde{\varphi})\dd{x}
		\\
		&+\langle \mathcal{A}^{(3)}(\tilde{v},\tilde{S},\tilde{\varphi}),\kappa(\varphi-\tilde{\varphi})\rangle
	\end{aligned}
	\end{equation*}
	Also, since $v$ is a divergence-free function, thanks to integration by parts, we know
	\begin{equation*}
		\int_{\Omega}v\cdot(\nabla\varphi-\nabla\tilde{\varphi})(\varphi-\tilde{\varphi})\dd{x}
		=0.
	\end{equation*}
	Inserting these two equalities into~\eqref{weak formulation nonconvexity not yet finished} yields
	\begin{equation}\label{weak formulation of non convexity}
	    \begin{aligned}
		-&\kappa\int_{0}^{t}\phi^{\prime} \int_{\Omega}\frac{\abs{\varphi-\tilde{\varphi}}^{2}}{2} \dd{x} \dd{s}
		+\int_{0}^{t}\phi\int_{\Omega}\kappa(\nabla\mu-\nabla\tilde{\mu})\cdot(\nabla\varphi-\nabla\tilde{\varphi})+\kappa(v-\tilde{v})\cdot\nabla\tilde{\varphi}(\varphi-\tilde{\varphi})\dd{x} \dd{s}
		\\
		+&\int_{0}^{t}\phi\langle \mathcal{A}^{(3)}(\tilde{v},\tilde{S},\tilde{\varphi}),\kappa(\varphi-\tilde{\varphi})\rangle\dd{s}
		=\kappa\int_{\Omega}\frac{\abs{\varphi_{0}-\tilde{\varphi}(0)}^{2}}{2} \dd{x}.
	\end{aligned}
	\end{equation}
	Now, observe that $v-\tilde{v}$ is an admissible test function for \eqref{operator A for two phase problem part one}, $S-\tilde{S}$ is admissible for \eqref{operator A for two phase problem part two} and $-\Delta(\varphi-\tilde{\varphi})+W^{\prime\prime}(\tilde{\varphi})(\varphi-\tilde{\varphi})$ is admissible for~\eqref{operator A for two phase problem part three}. Moreover, notice that $-\frac{\rho_{2}-\rho_{1}}{2}\tilde{v}\cdot(v-\tilde{v})$ is also admissible for \eqref{operator A for two phase problem part three}. Hence, choosing them as the test functions for each operator and summing up~\eqref{weak formulation tested by velocity}-\eqref{weak formualtion tested phase field energy}, \eqref{weak formulation nabla mu}, and \eqref{weak formulation of non convexity} yields that
	\begin{equation}\label{relative energy disssipation estimate not yet finished}
	    \begin{aligned}
		-&\int_{0}^{t}\phi^{\prime} \mathcal{R}(v(t),S(t),\varphi(t)|\tilde{v}(t),\tilde{S}(t),\tilde{\varphi}(t)) \dd{s}
		\\
		&+\int_{0}^{t}\phi \left\langle \mathcal{A}_{\gamma}(\tilde{v},\tilde{S},\tilde{\varphi}),\begin{pmatrix}
			v-\tilde{v}\\
			S-\tilde{S}\\
			-\Delta(\varphi-\tilde{\varphi})+W^{\prime\prime}(\tilde{\varphi})(\varphi-\tilde{\varphi})+\kappa(\varphi-\tilde{\varphi})-\frac{\rho_{2}-\rho_{1}}{2}\tilde{v}\cdot(v-\tilde{v})
		\end{pmatrix}
		\right\rangle_{\!\!\!\mathbb{Y}} \dd{s}
		\\
		&+\int_{0}^{t}\phi\left(\mathcal{P}(\varphi;S)-\mathcal{P}(\varphi;\tilde{S})\right) \dd{s}
		+\int_{0}^{t}\phi \widetilde{\mathcal{W}}(v,S,\varphi|\tilde{v},\tilde{S},\tilde{\varphi}) \dd{s}
		\\
		\leq&\mathcal{R}(v_{0},S_{0},\varphi_{0}|\tilde{v}(0),\tilde{S}(0),\tilde{\varphi}(0)),
	\end{aligned}
	\end{equation}
	where $\widetilde{\mathcal{W}}$ is given by
    \begin{equation}
    \begin{aligned}
        \widetilde{\mathcal{W}}(v,S,\varphi|\tilde{v},\tilde{S},\tilde{\varphi})=\mathcal{W}_{\gamma}^{(\mathcal{K})}\left(v,S,\varphi|\tilde{v},\tilde{S},\tilde{\varphi}\right)-\mathcal{K}(\tilde{v},\tilde{S},\tilde{\varphi})\mathcal{R}(v,S,\varphi|\tilde{v},\tilde{S},\tilde{\varphi}).
    \end{aligned}
    \end{equation}
    Finally, by choosing $\phi(t)=\psi(t)\exp\left(-\int_{0}^{t}\mathcal{K}(\tilde{v},\tilde{S},\tilde{\varphi})\dd{\tau}\right)$ for $\psi\in\tilde{C}([0,t])$, we arrive at the relative energy estimate \eqref{relative energy estimate for two phase problem}.
\end{proof}

The next two results state that the velocity component and the phase-field variable of a dissipative solution of type $\mathcal{K}$ are also a weak solution of the momentum balance~\eqref{fluid equation momentum} and a weak solution of the evolution law~\eqref{first CHE}, respectively, under certain assumptions on $\mathcal{K}$.

\begin{prop}\label{energy-variational solution satisfies the weak formulation of Cahn-Hilliard equation}
    Suppose that the regularity weight $\mathcal{K}$ satisfies $\mathcal{K}(0,0,\tilde{\varphi})=0$ for all $\tilde{\varphi}\in C_{0}^{\infty}(\Omega\times[0,T))$ such that $\tilde{\varphi}\in(-1,1)$. Moreover, assume that $\partial_{t}\varphi\in L^{2}(0,T;H^{-1}(\Omega))$. Then a dissipative solution of type $\mathcal{K}$ is a weak solution of the phase-field evolutionary law, i.e. 
    \eqref{weak first CH TPP}
    is satisfied for all $\zeta$ such that $\zeta\in L^{2}([0,T);H^{1}(\Omega))$, $\partial_{t}\zeta\in L^{2}([0,T);L^{2}(\Omega))$ and $\zeta
    (T)=0$.
\end{prop}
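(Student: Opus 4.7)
The plan is to extract the weak form \eqref{weak first CH TPP} from the relative energy-dissipation estimate \eqref{relative energy estimate for two phase problem} by restricting the test triple to $(0,0,\tilde\varphi)$ and performing a variation in $\tilde\varphi$ around a smooth approximation of the solution.

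First I would specialize \eqref{relative energy estimate for two phase problem} to $\tilde v=0$ and $\tilde S=0$. The hypothesis $\mathcal K(0,0,\tilde\varphi)=0$ trivializes the exponential weights, and inspection of \eqref{operator A for two phase problem part one}--\eqref{operator A for two phase problem part three} together with \eqref{dissipation function like quantity for two phase} shows that every contribution multiplied by $\tilde v$, $\tilde S$ or their derivatives vanishes. The resulting inequality depends on $\tilde\varphi$ only through $\tilde\varphi$ itself, $\partial_t\tilde\varphi$, $\tilde\mu:=-\Delta\tilde\varphi+W'(\tilde\varphi)$, and the relative phase-field energy $\mathcal R_{\mathrm{pf},\kappa}(\varphi|\tilde\varphi)$ evaluated at $s=0$ and $s=t$.

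Next, I would approximate $\varphi$ by mollification: pick $\tilde\varphi_\ast\in C_0^\infty(\Omega\times[0,T))$ with $|\tilde\varphi_\ast|<1$ strictly and $\tilde\varphi_\ast\to\varphi$ in $L^2_{\mathrm{loc}}(0,T;H^2(\Omega))$, where the added assumption $\partial_t\varphi\in L^2(0,T;H^{-1}(\Omega))$ moreover yields $\partial_t\tilde\varphi_\ast\rightharpoonup\partial_t\varphi$ in $L^2_{\mathrm{loc}}(0,T;H^{-1}(\Omega))$. For a smooth test function $\zeta$ and $\lambda\in\mathbb R$ with $|\lambda|$ small enough to preserve $|\tilde\varphi_\ast+\lambda\zeta|<1$, insert $\tilde\varphi_\ast+\lambda\zeta$ into the inequality from the first step. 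The result takes the form $A(\tilde\varphi_\ast)+\lambda B(\tilde\varphi_\ast,\zeta)+\lambda^2 C(\tilde\varphi_\ast,\zeta)\le 0$, with $C\ge 0$ because the quadratic-in-$\lambda$ part originates from the convex functionals $\mathcal R_{\mathrm{pf},\kappa}$ and from the dissipation $\tfrac12|\nabla\mu-\nabla\tilde\mu|^2$. Requiring the inequality on a symmetric interval $\lambda\in[-\epsilon_0,\epsilon_0]$ yields $|B|\le -A/\epsilon_0-C\epsilon_0$; optimizing in $\epsilon_0$ forces $B=0$ provided $\epsilon_0$ may be chosen as large as $\sqrt{-A/C}$.

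In the limit $\tilde\varphi_\ast\to\varphi$, by dominated convergence (using the $L^\infty$-bound $|\tilde\varphi_\ast|<1$ to control $W''(\tilde\varphi_\ast)$, $\eta(\tilde\varphi_\ast)$, and the Sobolev convergences above), the residual $-A\ge 0$ converges to the gap of the partial kinetic--elastic energy-dissipation inequality for $(v,S)$. Combining this limit with the total energy-dissipation estimate obtained from \eqref{relative energy estimate for two phase problem} at $\tilde U=0$, one matches $-A$ against the Cahn--Hilliard contribution contained in $C$, so that the optimization indeed gives $B(\varphi,\zeta)=0$. Computing $B(\varphi,\zeta)$ explicitly via integration by parts in space (using the Neumann condition \eqref{two phase system alternative neumann boundary}) and in time (using $\zeta(T)=0$ and the duality pairing $\langle\partial_t\varphi,\zeta\rangle$ permitted by $\partial_t\varphi\in L^2(0,T;H^{-1}(\Omega))$) produces exactly \eqref{weak first CH TPP}. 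A final density argument extends the class of $\zeta$ to that stated in the proposition. The main obstacle is ensuring $B(\varphi,\zeta)=0$ given that $A$ is not a priori zero in the limit, which requires the delicate compensation between $A$ and $C$ described above; a secondary technical point is tracking $W''(\tilde\varphi_\ast)$ and $\Delta\tilde\mu_\ast$ through the mollification limit, which is enabled by the uniform bound $|\tilde\varphi_\ast|<1$ and the $L^2_{\mathrm{loc}}(0,T;H^2(\Omega))$ regularity of $\varphi$.
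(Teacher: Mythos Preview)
Your strategy diverges from the paper's and contains a genuine gap at the ``compensation'' step. The paper does not perturb around an approximation of $\varphi$. Instead, after setting $\tilde v=0$, $\tilde S=0$, it uses the hypothesis $\partial_t\varphi\in L^2(0,T;H^{-1})$ to integrate by parts in time and simplify the resulting inequality so that the dependence on $\tilde\varphi$ enters \emph{only linearly} through the single quantity $\mathrm{D}\mathcal E_{\mathrm{pf},\kappa}(\tilde\varphi)=\tilde\mu+\kappa\tilde\varphi$. Once this is seen, one chooses $\tilde\varphi_\alpha$ with $\mathrm{D}\mathcal E_{\mathrm{pf},\kappa}(\tilde\varphi_\alpha)=\alpha\,\mathrm{D}\mathcal E_{\mathrm{pf},\kappa}(\tilde\varphi)$ (surjectivity of the subdifferential), divides by $\alpha$, and sends $\alpha\to\infty$; all $\tilde\varphi$-independent terms disappear and only the linear form remains as an inequality. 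Replacing $\mathrm{D}\mathcal E_{\mathrm{pf},\kappa}(\tilde\varphi)$ by its negative (again surjectivity) turns the inequality into an equality, which is \eqref{weak first CH TPP}.

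Your argument, by contrast, varies $\tilde\varphi$ itself, so the expansion in $\lambda$ is genuinely quadratic and you must control the constant term $A$. The claim that ``$-A$ converges to the gap of the partial kinetic--elastic energy-dissipation inequality'' and can be ``matched against the Cahn--Hilliard contribution in $C$'' is where the proof breaks down: a dissipative solution satisfies only the \emph{total} relative energy inequality, not separate partial inequalities, so there is no a priori kinetic--elastic gap to invoke, and no mechanism is given by which $-A/C$ would be bounded by the square of the admissible range for $\epsilon_0$. That range is further limited by the constraint $|\tilde\varphi_\ast+\lambda\zeta|<1$, which shrinks as $\tilde\varphi_\ast$ approaches $\varphi$ near $\pm1$. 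In short, the optimization step ``forces $B=0$'' only if $A=0$, which you have not established; the paper avoids this entirely by exploiting the hidden linearity in $\tilde\mu+\kappa\tilde\varphi$ and a scaling rather than a perturbative limit.
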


\begin{proof}
    From the assumption on $\mathcal{K}$, we have that $\exp(\int_{0}^{t}\mathcal{K}(\tilde{v},0,0)\dd{\tau})=e^{0}=1$ for all $\tilde{v}\in C_{0,\mathrm{div}}^{\infty}(\Omega\times[0,T))$. Setting $\tilde{v}\equiv0$ and $\tilde{S}\equiv0$ in \eqref{relative energy estimate for two phase problem} and with the fact that $\mathcal{P}(\varphi;0)\equiv0$, we obtain    
    \begin{equation}\label{relative energy disssipation estimate with zero velocity and stress}
        \begin{aligned}
			&\mathcal{R}(v(t),S(t),\varphi(t)|0,0,\tilde{\varphi}(t))
			+\int_{0}^{t}\mathcal{P}(\varphi;S)+\mathcal{W}_{\gamma}^{(\mathcal{K})}(v,S,\varphi|0,0,\tilde{\varphi})\dd{s}
                \\
			&+\int_{0}^{t} \left\langle \mathcal{A}_{\gamma}(0,0,\tilde{\varphi}),
			\begin{pmatrix}
				v\\
				S\\
				-\Delta(\varphi-\tilde{\varphi})+W^{\prime\prime}(\tilde{\varphi})(\varphi-\tilde{\varphi})+\kappa(\varphi-\tilde{\varphi})
			\end{pmatrix}
			\right\rangle_{\!\!\!\mathbb{Y}} 
			 \dd{s}
			\\
			\leq &\mathcal{R}(v_{0},S_{0},\varphi_{0}|0,0,\tilde{\varphi}(0)) 
    \end{aligned}
    \end{equation}
    for all $\tilde{\varphi}\in C_{0}^{\infty}(\Omega\times[0,T))$ with $\tilde{\varphi}\in(-1,1)$ and a.e. $t\in(0,T)$. In view of \eqref{operator A for two phase problem part one}-\eqref{operator A for two phase problem part three}, we have
    \begin{subequations}
        \begin{equation}
            \langle\mathcal{A}^{(1)}(0,0,\tilde{\varphi}),v\rangle_{H_{0,\mathrm{div}}^{1}}=
            -\langle f, v\rangle_{H^{1}}
            -\int_{\Omega}\tilde{\mu}\nabla\tilde{\varphi}\cdot v \dd{x},
            \label{operator A part one with zero velocity and stress}
        \end{equation}
        and
        \begin{equation}
            \langle\mathcal{A}_{\gamma}^{(2)}(0,0,\tilde{\varphi}),S\rangle_{H_{\mathrm{sym,Tr}}^{1}}=0,
            \label{operator A part two with zero velocity and stress}
        \end{equation}
        and
        \begin{equation}\label{operator A part three with zero velocity and stress}
            \begin{aligned}
            &\langle\mathcal{A}^{(3)}(0,0,\tilde{\varphi}),-\Delta(\varphi-\tilde{\varphi})+W^{\prime\prime}(\tilde{\varphi})(\varphi-\tilde{\varphi})+\kappa(\varphi-\tilde{\varphi}) \rangle_{L^{2}}
            \\
            =&\int_{\Omega}\partial_{t}\tilde{\varphi}\left(-\Delta(\varphi-\tilde{\varphi})+W^{\prime\prime}(\tilde{\varphi})(\varphi-\tilde{\varphi})+\kappa(\varphi-\tilde{\varphi})\right)\dd{x}
            \\
            &-\int_{\Omega}\Delta\tilde{\mu}\left(-\Delta(\varphi-\tilde{\varphi})+W^{\prime\prime}(\tilde{\varphi})(\varphi-\tilde{\varphi})+\kappa(\varphi-\tilde{\varphi})\right) \dd{x},
        \end{aligned}
        \end{equation}
    \end{subequations}
    Moreover, by \eqref{dissipation function like quantity for two phase}, it is
    \begin{equation}\label{dissipation function like quantity with zero velocity and stress}
        \begin{aligned}
        \mathcal{W}_{\gamma}^{(\mathcal{K})}(v,S,\varphi|0,0,\tilde{\varphi})
		=&\int_{\Omega}
		2\nu(\varphi)\abs{\sym{\nabla v}}^{2}
        +\gamma\abs{\nabla S}^{2}
        +\abs{\nabla\mu}^{2}\dd{x}
		\\
		&-\int_{\Omega}\nabla\mu\cdot\nabla\tilde{\mu}
        +\tilde{\mu}(\nabla\varphi-\nabla\tilde{\varphi})\cdot v\dd{x}
		\\
		&+\int_{\Omega}\Delta\tilde{\mu}(-\Delta(\varphi-\tilde{\varphi})+W^{\prime\prime}(\tilde{\varphi})(\varphi-\tilde{\varphi})) \dd{x}
		\\
		&+\int_{\Omega}\kappa(\nabla\mu-\nabla\tilde{\mu})\cdot(\nabla\varphi-\nabla\tilde{\varphi})
		+\kappa v\cdot\nabla\tilde{\varphi}(\varphi-\tilde{\varphi}) \dd{x}.
    \end{aligned}
    \end{equation}
    Inserting~\eqref{operator A part one with zero velocity and stress}-\eqref{operator A part three with zero velocity and stress} and~\eqref{dissipation function like quantity with zero velocity and stress} into~\eqref{relative energy disssipation estimate with zero velocity and stress} and applying Lemma~\ref{result related to energy estimate} results in the estimate
    \begin{equation}
        \begin{aligned}
        -&\int_{0}^{T}\phi^{\prime}\int_{\Omega}\rho\frac{\abs{v}^{2}}{2}+\frac{\abs{S}^{2}}{2}+\frac{\abs{\nabla\varphi-\nabla\tilde{\varphi}}^{2}}{2}+W(\varphi)-W(\tilde{\varphi})-W^{\prime}(\tilde{\varphi})(\varphi-\tilde{\varphi})+\frac{\kappa}{2}\abs{\varphi-\tilde{\varphi}}^{2} \dd{x}\dd{t}
        \\
        &+\int_{0}^{T}\phi\int_{\Omega}
		2\nu(\varphi)\abs{\sym{\nabla v}}^{2}
        +\gamma\abs{\nabla S}^{2}
        +\abs{\nabla\mu}^{2}\dd{x}\dd{t}
        -\int_{0}^{T}\phi\langle f,v\rangle_{H^{1}}\dd{t}
        +\int_{0}^{T}\phi\mathcal{P}(\varphi;S)\dd{t}
        \\
        &+\int_{0}^{T}\phi\int_{\Omega}\partial_{t}\tilde{\varphi}\left(-\Delta(\varphi-\tilde{\varphi})+W^{\prime\prime}(\tilde{\varphi})(\varphi-\tilde{\varphi})+\kappa(\varphi-\tilde{\varphi})\right)\dd{x}\dd{t}
        \\
        &-\int_{0}^{T}\phi\int_{\Omega}\nabla\mu\cdot\nabla\tilde{\mu}
        +\tilde{\mu}\nabla\varphi\cdot v 
        -\kappa\nabla\mu\cdot(\nabla\varphi-\nabla\tilde{\varphi})
		-\kappa v\cdot\nabla\varphi(\varphi-\tilde{\varphi})\dd{x}\dd{t}
        \\
        \leq&\int_{\Omega}\frac{\abs{\nabla\varphi_{0}-\nabla\tilde{\varphi}(0)}^{2}}{2}+W(\varphi_{0})-W(\tilde{\varphi}(0))-W^{\prime}(\tilde{\varphi}(0))(\varphi_{0}-\tilde{\varphi}(0))+\frac{\kappa}{2}\abs{\varphi_{0}-\tilde{\varphi}(0)}^{2} \dd{x}
        \\
        &+\int_{\Omega}\rho_{0}\frac{\abs{v_{0}}^{2}}{2}+\frac{\abs{S_{0}}^{2}}{2}\dd{x},
    \end{aligned}
    \end{equation}
    for all $\phi\in\tilde{C}([0,T])$. 
    Notice that, by integration by parts in time, rearranging the terms in the above inequality and exploiting cancellations in terms that depend on $\tilde{\mu}+\kappa\tilde{\varphi}$, we then get
    \begin{equation}\label{simplied relative energy estimate when velocity and stress are zero}
        \begin{aligned}
        -&\int_{0}^{T}\phi^{\prime}\int_{\Omega}\rho\frac{\abs{v}^{2}}{2}
        +\frac{\abs{S}^{2}}{2}\dd{x}\dd{t}
        -\int_{\Omega}\rho_{0}\frac{\abs{v_{0}}^{2}}{2}
        +\frac{\abs{S_{0}}^{2}}{2}\dd{x}
        \\
        +&\int_{0}^{T}\phi\int_{\Omega}
		2\nu(\varphi)\abs{\sym{\nabla v}}^{2}
        +\gamma\abs{\nabla S}^{2}\dd{x}\dd{t}
        -\int_{0}^{T}\phi\langle f, v\rangle_{H^{1}}\dd{t}
        +\int_{0}^{T}\phi\mathcal{P}(\varphi;S)\dd{t}
        \\
        +&\int_{0}^{T}\phi\langle \partial_{t}\varphi,\mu+\kappa\varphi\rangle_{H^{1}}\dd{t}
        +\int_{0}^{T}\phi\int_{\Omega}\kappa v\cdot\nabla\varphi\varphi
        +\nabla\mu(\nabla\mu+\kappa\nabla\varphi)\dd{x}\dd{t}
        \\
        -&\int_{0}^{T}\phi\langle\partial_{t}\varphi,\tilde{\mu}+\kappa\tilde{\varphi}\rangle_{H^{1}}\dd{t}
        -\int_{0}^{T}\phi\int_{\Omega}v\cdot\nabla\varphi(\tilde{\mu}+\kappa\tilde{\varphi}) 
        +\nabla\mu\cdot(\nabla\tilde{\mu}+\kappa\nabla\tilde{\varphi})
		\dd{x}\dd{t}
        \leq0.
    \end{aligned}
    \end{equation}
    First, notice that in \eqref{simplied relative energy estimate when velocity and stress are zero}, the regularity of $\tilde{\varphi}$ can be reduced such that $\mathrm{D}\mathcal{E}_{\mathrm{pf},\kappa}(\tilde{\varphi})=\tilde{\mu}+\kappa\tilde{\varphi}\in L^{2}([0,T);H^{1}(\Omega))$. Hence, choose $\tilde{\varphi}_{\alpha}$ such that 
    $\mathrm{D}\mathcal{E}_{\mathrm{pf},\kappa}(\tilde{\varphi}_{\alpha})=\alpha \mathrm{D}\mathcal{E}_{\mathrm{pf},\kappa}(\tilde{\varphi})\in L^{2}([0,T);H^{1}(\Omega))$ with $\alpha>0$ (where the existence of such $\tilde{\varphi}_{\alpha}$ can be guaranteed by the surjectivity of the subdifferential of a proper, convex, lower semicontinuous and coercive functional, see~\cite[][Chapter 2.2]{zbMATH05948485} for details) and multiply both sides of~\eqref{simplied relative energy estimate when velocity and stress are zero} by $\frac{1}{\alpha}$. This gives
    \begin{equation}
        \begin{aligned}
        -&\frac{1}{\alpha}\int_{0}^{T}\phi^{\prime}\int_{\Omega}
        \rho\frac{\abs{v}^{2}}{2}
        +\frac{\abs{S}^{2}}{2} \dd{x}\dd{t}
        -\int_{\Omega}\rho_{0}\frac{\abs{v_{0}}^{2}}{2}
        +\frac{\abs{S_{0}}^{2}}{2}\dd{x}
        \\
        +&\frac{1}{\alpha}\int_{0}^{T}\phi\int_{\Omega}
		2\nu(\varphi)\abs{\sym{\nabla v}}^{2}
        +\gamma\abs{\nabla S}^{2}\dd{x}\dd{t}
        -\int_{0}^{T}\phi\langle f, v\rangle_{H^{1}}\dd{t}
        +\int_{0}^{T}\phi\mathcal{P}(\varphi;S)\dd{t}
        \\
        +&\frac{1}{\alpha}\int_{0}^{T}\phi\langle \partial_{t}\varphi,\mu+\kappa\varphi\rangle\dd{t}
        +\int_{0}^{T}\phi\int_{\Omega}\kappa v\cdot\nabla\varphi\varphi
        +\nabla\mu(\nabla\mu+\kappa\nabla\varphi)\dd{x}
        \\
        -&\int_{0}^{T}\phi\langle\partial_{t}\varphi,\tilde{\mu}+\kappa\tilde{\varphi}\rangle_{H^{1}}\dd{t}
        -\int_{0}^{T}\phi\int_{\Omega}v\cdot\nabla\varphi(\tilde{\mu}+\kappa\tilde{\varphi}) 
        +\nabla\mu\cdot(\nabla\tilde{\mu}+\kappa\nabla\tilde{\varphi})
		\dd{x}\dd{t}
        \leq0.
     \end{aligned}
    \end{equation}
    Letting $\alpha\to\infty$ gives
    \begin{equation}
        -\int_{0}^{T}\phi\langle\partial_{t}\varphi,\tilde{\mu}+\kappa\tilde{\varphi}\rangle_{H^{1}}\dd{t}
        -\int_{0}^{T}\phi\int_{\Omega}v\cdot\nabla\varphi(\tilde{\mu}+\kappa\varphi) 
        +\nabla\mu\cdot(\nabla\tilde{\mu}+\kappa\nabla\varphi)
		\dd{x}\dd{t}
        \leq0
    \end{equation}
    for all $\phi\in\tilde{C}([0,T])$. By Lemma~\ref{result related to energy estimate}, we conclude
    \begin{equation}
        \int_{0}^{T}\langle\partial_{t}\varphi,\tilde{\mu}+\kappa\tilde{\varphi}\rangle_{H^{1}}\dd{t}
        +\int_{0}^{T}\int_{\Omega}v\cdot\nabla\varphi(\tilde{\mu}+\kappa\varphi) 
        +\nabla\mu\cdot(\nabla\tilde{\mu}+\kappa\nabla\varphi)
		\dd{x}\dd{t}
        \geq0.
    \end{equation}
    This inequality is linear with respect to $\tilde{\mu}+\kappa\tilde{\varphi}$. Thus, choosing $\tilde{\varphi}_{-}$ such that $\mathrm{D}\mathcal{E}_{\mathrm{pf},\kappa}(\tilde{\varphi}_{-})=-\mathrm{D}\mathcal{E}_{\mathrm{pf},\kappa}(\tilde{\varphi})$ and integrating by parts in time yields the desired equality.
\end{proof}

\begin{prop}\label{energy-variational solution satisfies the weak formulation of momentum balance}
     Suppose that the regularity weight $\mathcal{K}$ satisfies $\mathcal{K}(\tilde{v},0,0)=0$ for all $\tilde{v}\in C_{0,\mathrm{div}}^{\infty}(\Omega\times[0,T))$. Moreover, assume that $\partial_{t}\varphi\in L^{2}([0,T);H^{-1}(\Omega
     ))$ and that the weak formulation
    \eqref{weak first CH TPP}
     holds true  for all $\zeta$ such that $\zeta\in L^{2}([0,T);H^{1}(\Omega))$, $\partial_{t}\zeta
     \in L^{2}([0,T);L^{2}(\Omega))$ and $\zeta(T)=0$. Then a dissipative solution of type $\mathcal{K}$ is a weak solution of the momentum balance, i.e. \eqref{weak velocity TPP} is satisfied for all $\Phi\in C_{0,\mathrm{div}}^{\infty}(\Omega\times[0,T))$.
\end{prop}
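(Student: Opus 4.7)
The proof strategy parallels the proof of Proposition~\ref{energy-variational solution satisfies the weak formulation of Cahn-Hilliard equation}, with the roles of the phase-field and the momentum equations interchanged. The plan is to choose $\tilde{S}\equiv 0$ and $\tilde{\varphi}\equiv 0$ in the relative energy-dissipation estimate \eqref{relative energy estimate for two phase problem}. The hypothesis $\mathcal{K}(\tilde{v},0,0)=0$ collapses the exponential weight to $1$, and $\mathcal{P}(\varphi;0)=0$ removes the plastic potential contribution. With $\tilde{\rho}=(\rho_1+\rho_2)/2$ constant and $\tilde{\mu}=0$, the operator component $\mathcal{A}_\gamma^{(2)}(\tilde{v},0,0)$ vanishes identically and $\mathcal{A}_\gamma^{(1)}(\tilde{v},0,0)$ together with $\mathcal{A}^{(3)}(\tilde{v},0,0)$ simplify considerably.

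Next, I would expand $\mathcal{R}(v,S,\varphi|\tilde{v},0,0)$ and $\mathcal{W}_\gamma^{(\mathcal{K})}(v,S,\varphi|\tilde{v},0,0)$ using \eqref{relative energy of two phase} and \eqref{dissipation function like quantity for two phase} and sort all contributions according to their dependence on $\tilde{v}$, i.e.\ separate terms into pieces constant, linear, and quadratic in $\tilde{v}$. The constant part corresponds exactly to the choice $\tilde{v}\equiv 0$, which reproduces the total energy-dissipation estimate. The assumed weak formulation \eqref{weak first CH TPP} of the phase-field evolution is then used to rewrite those contributions that couple $\mu$, $\nabla\varphi$ and $\tilde{v}$; in particular, terms such as $\mu\nabla\varphi\cdot\tilde{v}$ arising from \eqref{opartor A without}, together with the $\kappa$-correction terms in \eqref{dissipation function like quantity for two phase}, are reorganized by integration by parts in time and space so as to reassemble into the form appearing on the left-hand side of \eqref{weak velocity TPP}. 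This step is essentially the reverse of the manipulations performed in the proof of Proposition~\ref{generalized solution is dissipative solution for TPP}.

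To extract the linear-in-$\tilde{v}$ inequality, I would apply the scaling $\tilde{v}\mapsto\alpha\tilde{v}$ for $\alpha>0$ (which is admissible in $\mathfrak T$ since $C_{0,\mathrm{div}}^\infty(\Omega\times[0,T))$ is a linear space). The resulting quantity is a polynomial of degree at most two in $\alpha$ of the form $\alpha L(\tilde{v})+\alpha^2 Q(\tilde{v})\le 0$ after subtracting the $\alpha=0$ estimate, where $L$ is linear and $Q$ quadratic in $\tilde{v}$. Dividing by $\alpha$ and sending $\alpha\to 0^+$ eliminates the quadratic parts (stemming from $\rho|v-\alpha\tilde{v}|^2/2$, the viscous term $2\nu(\varphi)|\sym{\nabla v}-\alpha\sym{\nabla\tilde{v}}|^2$, and the convective contribution in $\mathcal{A}^{(1)}$) and leaves $L(\tilde{v})\le 0$. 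Replacing $\tilde{v}$ by $-\tilde{v}$ upgrades this to the equality $L(\tilde{v})=0$, which by the bookkeeping above is precisely \eqref{weak velocity TPP}. The test-in-time function $\phi\in\tilde{C}([0,T])$ together with Lemma~\ref{result related to energy estimate} is used once more to convert between the integrated and the pointwise-in-time formulations, and the initial-condition contribution $\int_\Omega\rho_0 v_0\cdot\tilde{v}(0)\,\mathrm{d}x$ arises as the linearization of $\mathcal{R}(v_0,S_0,\varphi_0|\tilde{v}(0),0,0)$ in $\tilde{v}(0)$.

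The main obstacle I expect is the careful bookkeeping required to verify that the linear functional $L(\tilde{v})$ extracted from the relative energy-dissipation estimate coincides term by term with the left-hand side of \eqref{weak velocity TPP}. In particular, one has to check that, after invoking \eqref{weak first CH TPP} with an appropriate test function built from $\mu$, $\varphi$ and $\tilde{v}$, the mixed Korteweg-type contributions and the $\kappa$-corrections produced by the convexification of the phase-field energy cancel in exactly the way needed to recover the Navier--Stokes-type weak formulation. Once this is confirmed, combining the linear-in-$\tilde{v}$ equality with the admissibility of $\tilde{v}\in C_{0,\mathrm{div}}^\infty(\Omega\times[0,T))$ yields \eqref{weak velocity TPP}, concluding the proof.
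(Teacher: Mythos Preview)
Your overall strategy matches the paper's: the paper simply refers to the argument of Proposition~\ref{energy-variational solution satisfies the weak formulation of Cahn-Hilliard equation} (and to \cite[Proposition~4.3]{eiterWeakstrongUniquenessEnergyvariational2022b}), i.e.\ set $\tilde{S}\equiv 0$, $\tilde{\varphi}\equiv 0$, exploit $\mathcal{K}(\tilde{v},0,0)=0$ and $\mathcal{P}(\varphi;0)=0$, use the assumed weak Cahn--Hilliard identity \eqref{weak first CH TPP} to simplify, scale $\tilde{v}\mapsto\alpha\tilde{v}$, extract the linear part, and then replace $\tilde{v}$ by $-\tilde{v}$ to obtain the equality \eqref{weak velocity TPP}.

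There is, however, a genuine gap in your scaling step. You write that ``after subtracting the $\alpha=0$ estimate'' one obtains $\alpha L(\tilde{v})+\alpha^{2}Q(\tilde{v})\le 0$, and then divide by $\alpha$ and send $\alpha\to 0^{+}$. This does not work: both the $\alpha$-inequality and the $\alpha=0$ inequality point in the same direction, so subtraction is not legitimate. Concretely, writing the relative energy-dissipation estimate as $F(\alpha)\le 0$, you only know $F(0)\le 0$ (this is precisely the total energy inequality, which is generically strict for Leray--Hopf type solutions); hence $F(\alpha)-F(0)\le -F(0)$ with $-F(0)\ge 0$, and dividing by $\alpha$ and sending $\alpha\to 0^{+}$ yields nothing. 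Equivalently, dividing $F(\alpha)\le 0$ directly by $\alpha$ and letting $\alpha\to 0^{+}$ is useless because the constant part $F(0)/\alpha\to -\infty$ swamps the linear contribution.

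The correct limit is $\alpha\to\infty$, exactly as in the proof of Proposition~\ref{energy-variational solution satisfies the weak formulation of Cahn-Hilliard equation}. The point of the ``bookkeeping'' step you describe---reversing the manipulations of Proposition~\ref{generalized solution is dissipative solution for TPP} and invoking \eqref{weak first CH TPP}---is precisely to reduce the specialized inequality to one that is \emph{affine} in $\tilde{v}$ (constant plus linear), so that after dividing by $\alpha$ the constant part is $O(1/\alpha)\to 0$ and only the linear functional $L(\tilde{v})\le 0$ survives as $\alpha\to\infty$. Once you make this adjustment, the remainder of your argument goes through.
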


\begin{proof}
    This proposition can be shown by following the idea of the proof of Proposition~\ref{energy-variational solution satisfies the weak formulation of Cahn-Hilliard equation}. See also~\cite[][Proposition 4.3]{eiterWeakstrongUniquenessEnergyvariational2022b}.
\end{proof}

Next, we establish that dissipative solutions are transitive with respect to the regularity weights, given that the regularity weights satisfy a certain monotonicity.

\begin{prop}\label{monotonicity of energy variational solution}
	Let $(v,S,\varphi,\mu)$ be a dissipative solution of type $\mathcal{K}$. Let $\mathcal{K}$, $\mathcal{L}$ be regularity weights with the property $\mathcal{K}(\tilde{v},\tilde{S},\tilde{\varphi})\leq\mathcal{L}(\tilde{v},\tilde{S},\tilde{\varphi})$ for all $(\tilde{v},\tilde{S},\tilde{\varphi})\in \mathfrak{T}$ and a.e. $t\in(0,T)$. Then $(v,S,\varphi,\mu)$ is also a dissipative solution of type $\mathcal{L}$.
\end{prop}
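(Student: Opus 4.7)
The plan is to use Lemma~\ref{result related to energy estimate} as a bridge between the pointwise (a.e.\ $t$) form of \eqref{relative energy estimate for two phase problem} and its equivalent test-function form, and then to perform a well-chosen substitution of the scalar test function that converts $\mathcal{K}$-weights into $\mathcal{L}$-weights.

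First, I would isolate the weight dependence by writing $\mathcal{W}_\gamma^{(\mathcal{K})} = \widetilde{\mathcal{W}}_\gamma + \mathcal{K}(\tilde v,\tilde S,\tilde\varphi)\mathcal{R}$, cf.~\eqref{dissipation function like quantity for two phase}; all remaining terms inside the time integral of \eqref{relative energy estimate for two phase problem}, namely the system-operator contribution $\langle\mathcal{A}_\gamma(\tilde v,\tilde S,\tilde\varphi),\cdot\rangle$ and $\mathcal{P}(\varphi;S)-\mathcal{P}(\varphi;\tilde S)$, are already independent of the regularity weight. Dividing \eqref{relative energy estimate for two phase problem} by $\exp(\int_0^t\mathcal{K}\,\dd\tau)$ and applying Lemma~\ref{result related to energy estimate}, the $\mathcal{K}$-type property becomes equivalent to a variational inequality valid for every $\psi\in\tilde C([0,T])$, in which $\mathcal{R}(t)$ and the integrand each carry the multiplicative factor $\exp(-\int_0^t\mathcal{K}\,\dd\tau)$.

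The decisive step is the substitution $\psi\mapsto\psi\gamma$ with $\gamma(t):=\exp(-\int_0^t(\mathcal{L}-\mathcal{K})\,\dd\tau)$. The pointwise hypothesis $0\le\mathcal{K}\le\mathcal{L}$ guarantees $\gamma(0)=1$, $\gamma\ge 0$, and $\gamma'=-(\mathcal{L}-\mathcal{K})\gamma\le 0$, so that $\psi\gamma$ again belongs to $\tilde C([0,T])$ whenever $\psi$ does. Expanding $(\psi\gamma)'=\psi'\gamma+\psi\gamma'$ and using the identity $\gamma(t)\exp(-\int_0^t\mathcal{K}\,\dd\tau)=\exp(-\int_0^t\mathcal{L}\,\dd\tau)$, the contribution $-\psi\gamma'\mathcal{R}\exp(-\int_0^t\mathcal{K}\,\dd\tau)=\psi(\mathcal{L}-\mathcal{K})\mathcal{R}\exp(-\int_0^t\mathcal{L}\,\dd\tau)$ generated by the product rule combines exactly with the $\psi\mathcal{K}\mathcal{R}\exp(-\int_0^t\mathcal{L}\,\dd\tau)$-contribution to produce $\psi\mathcal{L}\mathcal{R}\exp(-\int_0^t\mathcal{L}\,\dd\tau)$. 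The resulting inequality is the $\mathcal{L}$-test-function form; a second application of Lemma~\ref{result related to energy estimate} in the reverse direction, followed by multiplication by $\exp(\int_0^t\mathcal{L}\,\dd\tau)$, gives precisely \eqref{relative energy estimate for two phase problem} with $\mathcal{K}$ replaced by $\mathcal{L}$, proving the claim.

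The only delicate point is the admissibility $\psi\gamma\in\tilde C([0,T])$, which is where the monotonicity $\mathcal{K}\le\mathcal{L}$ enters essentially; if $\mathcal{K}(\tilde v(\cdot),\tilde S(\cdot),\tilde\varphi(\cdot))$ fails to be $C^1$ in $t$, one first approximates $\gamma$ by $C^1$ functions sharing the sign and monotonicity properties and passes to the limit, which is routine. No substantial obstacle appears, because the argument is purely algebraic once the decomposition $\mathcal{W}_\gamma^{(\mathcal{K})}=\widetilde{\mathcal{W}}_\gamma+\mathcal{K}\mathcal{R}$ has been isolated.
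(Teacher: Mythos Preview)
Your approach is correct and essentially identical to the paper's: both convert \eqref{relative energy estimate for two phase problem} into its test-function form via Lemma~\ref{result related to energy estimate}, substitute the test function $\phi(t)\exp\bigl(-\int_0^t(\mathcal{L}-\mathcal{K})\,\dd\tau\bigr)$ (whose admissibility in $\tilde C([0,T])$ uses $\mathcal{K}\le\mathcal{L}$), and then apply the lemma once more in the reverse direction. Your explicit decomposition $\mathcal{W}_\gamma^{(\mathcal{K})}=\widetilde{\mathcal{W}}_\gamma+\mathcal{K}\mathcal{R}$ and the product-rule computation just spell out what the paper leaves implicit, and your remark on approximating $\gamma$ when $\mathcal{K}$ is not $C^1$ in time is a reasonable technical observation that the paper does not mention.
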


\begin{proof}
    This proposition can be proved by applying Lemma~\ref{result related to energy estimate} to~\eqref{relative energy estimate for two phase problem} with the test function
    \begin{equation*}
        \Phi(t):=\phi(t)\exp(-\int_{0}^{t}\mathcal{L}(\tilde{v},\tilde{S},\tilde{\varphi})-\mathcal{K}(\tilde{v},\tilde{S},\tilde{\varphi})\dd{\tau})
    \end{equation*}
    for $\phi\in\tilde{C}([0,T])$ and using Lemma \ref{result related to energy estimate} again to cancel $\phi$. See also~\cite[][Proposition 4.4]{eiterWeakstrongUniquenessEnergyvariational2022b}. Notice that $\Phi$ is only a valid test function for $\mathcal{K}(\tilde{v},\tilde{S},\tilde{\varphi})\leq\mathcal{L}(\tilde{v},\tilde{S},\tilde{\varphi}) $.
\end{proof}
    \section{Global existence result for the regularized two phase system $\gamma>0$}\label{section Global existence result for the regularized two phase system}

\subsection{Implicit time discretization}\label{Implicit time discretization}
In this section, we will use an implicit time discretization to show the existence of weak solutions.

To start with, we first define another dissipation potential $\widetilde{\mathcal{P}}$ as
\begin{align}
    \begin{split}
    \widetilde{\mathcal{P}}:L^{2}(\Omega)\times H_{\mathrm{sym,Tr}}^{1}(\Omega)&\to[0,+\infty]
    \\
    \widetilde{\mathcal{P}}(\varphi;S)&:=\begin{cases}
        \mathcal{P}(\varphi;S)&(\varphi,S)\in L^{2}(\Omega)\times H_{\mathrm{sym,Tr}}^{1}(\Omega)\cap\dom{\mathcal{P}},\\
        +\infty&\mbox{otherwise}.
    \end{cases}
    \end{split}
    \label{restriction of dissipation potential}
\end{align} 
$\widetilde{\mathcal{P}}$ can be viewed as the restriction of $\mathcal{P}$ in $L^{2}(\Omega)\times H_{\mathrm{sym,Tr}}^{1}(\Omega)$. Notice that $\widetilde{\mathcal{P}}$ is proper with $\widetilde{\mathcal{P}}(\varphi;0)=0$ for all $\varphi \in L^2(\Omega)$. Moreover, 
     for all $\varphi \in L^2(\Omega)$, the mapping $S \mapsto \widetilde{\mathcal{P}}(\varphi; S) $ is convex and lower semicontinuous in $H_{\mathrm{sym,Tr}}^{1}(\Omega)$. 
We write $\dom{\partial\widetilde{\mathcal{P}}(\varphi;\cdot)}$ to represent the domain of the convex partial subdifferential.

To begin with the time discretization, let $h=\frac{T}{N}$ for $N \in \mathbb{N}$ and let $t_{0}=0$, $t_{k}=kh$, $t_{N}=T$ and assume that the initial data $v_{0}, S_{0}, \varphi_{0}$ satisfy Assumption \ref{assumptions on the initial data}. For all $k=0,1,\ldots,N-1$, let $v_{k}\in L_{\mathrm{div}}^{2}(\Omega)$, $S_{k}\in L_{\mathrm{sym,Tr}}^{2}(\Omega)$, $\varphi_{k}\in H^1(\Omega)$ with $W^{\prime}(\varphi_{k})\in L^2(\Omega)$ and $\rho_{k}=\frac{1}{2}(\rho_{1}+\rho_{2})+\frac{1}{2}(\rho_{2}-\rho_{1})\varphi_{k}$.  Moreover, let $f_{k+1}=h^{-1}\int_{t_{k}}^{t_{k+1}}f\dd{\tau}$. We determine $(v_{k+1},S_{k+1},\varphi_{k+1},\mu_{k+1})$ based on the given data with
\begin{subequations}\label{time discrete problem}
\begin{align*}
	J_{k+1}=-\frac{\rho_{2}-\rho_{1}}{2}\nabla\mu_{k+1},
\end{align*}
i.e., we aim to find some $(v_{k+1},S_{k+1},\varphi_{k+1},\mu_{k+1})$ such that 
\begin{equation}
	v_{k+1}\in H_{0,\mathrm{div}}^{1}(\Omega),S_{k+1}\in H_{\mathrm{sym,Tr}}^{1}(\Omega)\cap\dom{\partial\widetilde{\mathcal{P}}(\varphi_{k};\cdot)},
	\varphi_{k+1}\in \dom{\mathrm{D} \mathcal{E}_{\mathrm{pf},\kappa}},\mu_{k+1}\in H_{\vec{n}}^{2}(\Omega),
\end{equation}
where 
\begin{equation}
    H_{\vec{n}}^{2}(\Omega):=\{u\in H^{2}(\Omega):\vec{n}\cdot\nabla u|_{\partial\Omega}=0\},    
\end{equation}
satisfying:\\
1. The weak formulation of discrete momentum balance:
\begin{equation}
    \begin{aligned}
	\langle \frac{\rho_{k+1} v_{k+1}-\rho_{k}v_{k}}{h}+ \divge{\rho_{k}v_{k+1}\otimes v_{k+1}},\Phi \rangle_{L^{2}}&
	\\
	+\langle 2\nu(\varphi_{k})\sym{\nabla v_{k+1}},\sym{\nabla\Phi} \rangle_{L^{2}} 
	-\langle \divge{\eta(\varphi_{k})S_{k+1}},\Phi \rangle_{L^{2}}&
	\\
	+\langle (\divge{J_{k+1}}-\frac{\rho_{k+1}-\rho_{k}}{h}-v_{k+1}\cdot\nabla\rho_{k})\frac{v_{k+1}}{2},\Phi \rangle_{L^{2}}&
	\\
    +\langle J_{k+1}\cdot\nabla v_{k+1}
    -\mu_{k+1}\nabla\varphi_{k},\Phi \rangle_{L^{2}}&=\langle f_{k+1},\Phi\rangle_{H^{1}}
	\label{discrete velocity}
\end{aligned}
\end{equation}
for all $\Phi\in C_{0,\mathrm{div}}^{\infty}(\Omega)$.\\
2. The weak formulation of the discrete evolution law for the stress:
\begin{equation}
\begin{aligned}
	\langle \frac{S_{k+1}-S_{k}}{h} 
    + (v_{k+1}\cdot\nabla S_{k+1}),\Psi \rangle_{L^{2}}&
	\\
	+\langle (S_{k+1}\skw{\nabla v_{k+1}}-\skw{\nabla v_{k+1}}S_{k+1}),\Psi \rangle_{L^{2}}& 
	\\
	+\langle \xi_{k+1}^{k},\Psi\rangle_{H_{\mathrm{sym,Tr}}^{1}}
    +\langle \gamma\nabla S_{k+1},\nabla\Psi \rangle_{L^{2}}
    &=\langle \eta(\varphi_{k})\sym{\nabla v_{k+1}},\Psi\rangle_{L^{2}} 
	\label{discrete stress}
\end{aligned}
\end{equation}
for all $\Psi\in C_{\mathrm{sym,Tr}}^{\infty}(\bar{\Omega})$, and here $\xi_{k+1}^{k}\in\partial\widetilde{\mathcal{P}}(\varphi_{k};S_{k+1})\subseteq (H_{\mathrm{sym,Tr}}^{1}(\Omega))^{\prime}$.\\
3. The discrete evolution law for the phase-field variable:
\begin{equation}
	\frac{\varphi_{k+1}-\varphi_{k}}{h}+v_{k+1}\cdot\nabla\varphi_{k}=\Delta\mu_{k+1}, 
	\label{discrete first CH}
\end{equation}
as well as
\begin{equation}
	\mu_{k+1}+\kappa\frac{\varphi_{k+1}+\varphi_{k}}{2}=-\Delta\varphi_{k+1}+W_{\kappa}^{\prime}(\varphi_{k+1}), 
	\label{discrete second CH}
\end{equation}
almost everywhere in $\Omega$.
\end{subequations}
\begin{remark}
	(1) Integrating \eqref{discrete first CH} in space and performing an integration by parts gives 
	\begin{align}
		\int_{\Omega}\varphi_{k+1} \dd{x}
		&=\int_{\Omega}\varphi_{k}\dd{x}+h\int_{\Omega}-v_{k+1}\cdot\nabla\varphi_{k}+\Delta\mu_{k+1} \dd{x}
		=\int_{\Omega}\varphi_{k}\dd{x},
	\end{align}
	where we made use of the Neumann boundary condition \eqref{two phase system alternative neumann boundary} and the fact that $v_{k+1}$ is divergence-free. Moreover, this equality implies that $\int_{\Omega}\varphi_{k}\dd{x}=\int_{\Omega}\varphi_{0}\dd{x}$ is a constant, i.e., the total volume conserved.\\
	(2) By multiplying \eqref{discrete first CH} with $-\frac{1}{2}(\rho_{2}-\rho_{1})$, we obtain that
	\begin{align}\label{evolution of discrete rho}
		-\frac{\rho_{k+1}-\rho_{k}}{h}-v_{k+1}\nabla\rho_{k}=\divge{J_{k+1}}.
	\end{align}
	Notice that $\divge{v_{k+1}\otimes J_{k+1}}=(\divge{J_{k+1}})v_{k+1}+J_{k+1}\cdot\nabla v_{k+1}$. Inserting \eqref{evolution of discrete rho} into \eqref{discrete velocity} results in
	\begin{equation}
	    \begin{aligned}
		\langle \frac{\rho_{k+1} v_{k+1}-\rho_{k}v_{k}}{h}+\divge{\rho_{k}v_{k+1} \otimes v_{k+1}},\Phi \rangle_{L^{2}}&
	   \\
	   +\langle 2\nu(\varphi_{k})\sym{\nabla v_{k+1}},\sym{\nabla\Phi}\rangle_{L^{2}}
	   -\langle \divge{\eta(\varphi_{k})S_{k+1}},\Phi\rangle_{L^{2}}&
	   \\
	   +\langle \divge{v_{k+1}\otimes J_{k+1}},\Phi\rangle_{L^{2}}
            -\langle f_{k+1},\Phi\rangle_{H^{1}}  &
            =\langle \mu_{k+1}\nabla\varphi_{k},\Phi\rangle_{L^{2}}, 
		\label{equivalent discrete velocity}
	\end{aligned}
	\end{equation}
    which is the direct weak formulation of the momentum balance \eqref{two phase system alternative velocity} with time discretization.
\end{remark}

Before we prove the existence of solutions for the time discrete problem \eqref{time discrete problem}, let us first deduce an estimate for the terms in the Cahn--Hilliard part. This inequality uses a similar method as in the proof of \cite[][Lemma 4.2]{abelsExistenceWeakSolutions2013}.
\begin{lem}\label{estimate from discrete second CH}
	Assume that $\varphi_{k+1}\in\dom{\mathrm{D} \mathcal{E}_{\mathrm{pf},\kappa}}$ and $\mu_{k+1}\in H^{1}(\Omega)$ are solutions to \eqref{discrete second CH} for given $\varphi_{k}\in H^{2}(\Omega)$ satisfying $\abs{\varphi_{k}}\leq1$ in $\Omega$ and
	\begin{align*}
		\frac{1}{\abs{\Omega}}\int_{\Omega}\varphi_{k+1} \dd{x}
		=\frac{1}{\abs{\Omega}}\int_{\Omega}\varphi_{k} \dd{x}
		\in(-1,1).
	\end{align*}
	Then there exists a positive constant $C$ depending on $\int_{\Omega}\varphi_{k}\dd{x}$, such that
	\begin{align}
		\lVert W_{\kappa}^{\prime}(\varphi_{k+1})\rVert_{L^2}+\abs{\int_{\Omega}\mu_{k+1} \dd{x}}
		&\leq C\left(\lVert\nabla\mu_{k+1}\rVert_{L^2}+\lVert\nabla\varphi_{k+1}\rVert_{L^2}^2+\lVert\nabla\varphi_{k}\rVert_{L^2}^{2}+1\right)
		\label{estimate on the discrete version of the derivative of double well potential and mean value of mu}\\
		\lVert\mathrm{D} \mathcal{E}_{\mathrm{pf},\kappa}(\varphi_{k+1})\rVert_{L^2}
		&\leq C\left(\lVert\mu_{k+1}\rVert_{L^2}+1\right)
		\label{estimate on the discrete version of cahn hilliard subdifferential}
	\end{align}
\end{lem}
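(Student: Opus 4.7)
Inequality~\eqref{estimate on the discrete version of cahn hilliard subdifferential} follows immediately from the defining equation: by~\eqref{discrete second CH} together with~\eqref{reformulation of second CH}, we have $\mathrm{D}\mathcal{E}_{\mathrm{pf},\kappa}(\varphi_{k+1})=\mu_{k+1}+\kappa\tfrac{\varphi_{k+1}+\varphi_k}{2}$, and since $\varphi_{k+1}\in\dom{\mathrm{D}\mathcal{E}_{\mathrm{pf},\kappa}}$ forces $\lvert\varphi_{k+1}\rvert\leq 1$ a.e.\ in $\Omega$ while by hypothesis $\lvert\varphi_k\rvert\leq 1$, the triangle inequality directly yields $\lVert\mathrm{D}\mathcal{E}_{\mathrm{pf},\kappa}(\varphi_{k+1})\rVert_{L^2}\leq\lVert\mu_{k+1}\rVert_{L^2}+\kappa\lvert\Omega\rvert^{1/2}$, which is the claimed bound.

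For~\eqref{estimate on the discrete version of the derivative of double well potential and mean value of mu}, the crucial algebraic ingredient is a one-sided bound exploiting the singular behaviour of $W_\kappa'$ at $\pm1$. Setting $\bar\varphi := (\varphi_{k+1})_\Omega=(\varphi_k)_\Omega\in(-1,1)$, which by hypothesis is bounded away from $\pm1$, the fact that $W_\kappa'(\varphi)\to\mp\infty$ as $\varphi\to\mp 1$ produces constants $c_1,C_1>0$ depending only on $\bar\varphi$ such that
\begin{equation*}
  W_\kappa'(\varphi)\,(\varphi-\bar\varphi)\geq c_1\,\lvert W_\kappa'(\varphi)\rvert-C_1\quad\text{for all }\varphi\in[-1,1].
\end{equation*}
Indeed, near $\varphi=1$ both $W_\kappa'(\varphi)$ and $\varphi-\bar\varphi$ are positive with $\varphi-\bar\varphi$ bounded below; an analogous sign discussion near $\varphi=-1$ combined with compactness of any sub-interval $[-1+\delta,1-\delta]$ handles the remaining ranges. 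This inequality converts energy-type inner products into $L^1$ control of $W_\kappa'(\varphi_{k+1})$.

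The first main step is to test~\eqref{discrete second CH} by $\varphi_{k+1}-\bar\varphi$ and integrate by parts using the Neumann boundary condition $\vec n\cdot\nabla\varphi_{k+1}|_{\partial\Omega}=0$, which produces
\begin{equation*}
  \int_\Omega\lvert\nabla\varphi_{k+1}\rvert^2\dd{x}+\int_\Omega W_\kappa'(\varphi_{k+1})(\varphi_{k+1}-\bar\varphi)\dd{x} = \int_\Omega\bigl(\mu_{k+1}-\tfrac{\kappa}{2}(\varphi_{k+1}+\varphi_k)\bigr)(\varphi_{k+1}-\bar\varphi)\dd{x}.
\end{equation*}
Because $\varphi_{k+1}-\bar\varphi$ has zero mean, $\mu_{k+1}$ on the right may be replaced by $\mu_{k+1}-(\mu_{k+1})_\Omega$; Cauchy--Schwarz, Poincar\'e, and $\lVert\varphi_{k+1}-\bar\varphi\rVert_{L^\infty}\leq 2$ then bound the right-hand side by $C(\lVert\nabla\mu_{k+1}\rVert_{L^2}+1)$. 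Combining with the algebraic inequality and discarding the nonnegative gradient term on the left (which is admissible slack for the claimed right-hand side) provides $\lVert W_\kappa'(\varphi_{k+1})\rVert_{L^1}\leq C(\lVert\nabla\mu_{k+1}\rVert_{L^2}+1)$. Integrating~\eqref{discrete second CH} over $\Omega$, the Neumann condition annihilates $\int_\Omega\Delta\varphi_{k+1}\dd{x}$ and yields $\lvert\Omega\rvert(\mu_{k+1})_\Omega=\int_\Omega W_\kappa'(\varphi_{k+1})\dd{x}-\tfrac{\kappa}{2}\int_\Omega(\varphi_{k+1}+\varphi_k)\dd{x}$, from which the estimate on $\lvert\int_\Omega\mu_{k+1}\dd{x}\rvert$ follows.

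To upgrade the $L^1$ bound to the claimed $L^2$ bound on $W_\kappa'(\varphi_{k+1})$, writing $m:=(W_\kappa'(\varphi_{k+1}))_\Omega$ for brevity, I test~\eqref{discrete second CH} by the zero-mean function $W_\kappa'(\varphi_{k+1})-m$. Integration by parts turns the $-\Delta\varphi_{k+1}$ contribution into $\int_\Omega W_\kappa''(\varphi_{k+1})\lvert\nabla\varphi_{k+1}\rvert^2\dd{x}\geq 0$ by convexity of $W_\kappa$, and the $W_\kappa'$-squared contribution collapses to $\lVert W_\kappa'(\varphi_{k+1})-m\rVert_{L^2}^2$. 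Using the zero-mean property of the test function together with Poincar\'e, the right-hand side is controlled by $C(\lVert\nabla\mu_{k+1}\rVert_{L^2}+1)\,\lVert W_\kappa'(\varphi_{k+1})-m\rVert_{L^2}$; dividing and combining with the previously obtained $L^1$ bound (which controls $\lvert m\rvert$) concludes~\eqref{estimate on the discrete version of the derivative of double well potential and mean value of mu}. The principal obstacle is establishing and applying the one-sided algebraic inequality above; everything else is standard Cahn--Hilliard testing adapted to the $\kappa$-convexified, time-discrete setting.
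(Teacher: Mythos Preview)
Your proof is correct and follows the same overall strategy as the paper: test \eqref{discrete second CH} with $\varphi_{k+1}-\bar\varphi$, exploit the one-sided algebraic bound $W_\kappa'(\varphi)(\varphi-\bar\varphi)\geq c_1\lvert W_\kappa'(\varphi)\rvert-C_1$ to obtain the $L^1$ control of $W_\kappa'(\varphi_{k+1})$, integrate the equation over $\Omega$ to bound $\lvert\int_\Omega\mu_{k+1}\,\mathrm{d}x\rvert$, and read off \eqref{estimate on the discrete version of cahn hilliard subdifferential} directly from the identity $\mathrm{D}\mathcal{E}_{\mathrm{pf},\kappa}(\varphi_{k+1})=\mu_{k+1}+\tfrac{\kappa}{2}(\varphi_{k+1}+\varphi_k)$.

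The one genuine difference is the $L^1\to L^2$ upgrade for $W_\kappa'(\varphi_{k+1})$. The paper invokes the black-box estimate \eqref{estimate on element in subdifferntial domain} (imported from the cited reference) to get $\lVert W_\kappa'(\varphi_{k+1})\rVert_{L^2}\leq C(\lVert \mathrm{D}\mathcal{E}_{\mathrm{pf},\kappa}(\varphi_{k+1})\rVert_{L^2}+1)\leq C(\lVert\mu_{k+1}\rVert_{L^2}+1)$ and then combines with Poincar\'e and the bound on $\lvert\int_\Omega\mu_{k+1}\,\mathrm{d}x\rvert$; you instead test \eqref{discrete second CH} directly with the zero-mean function $W_\kappa'(\varphi_{k+1})-m$. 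Your route is more self-contained (it is essentially how \eqref{estimate on element in subdifferntial domain} is proved in the first place) and, as a bonus, delivers the bound without needing the $\lVert\nabla\varphi_{k+1}\rVert_{L^2}^2+\lVert\nabla\varphi_k\rVert_{L^2}^2$ terms. Two minor points: the integration by parts $\int_\Omega(-\Delta\varphi_{k+1})(W_\kappa'(\varphi_{k+1})-m)\,\mathrm{d}x=\int_\Omega W_\kappa''(\varphi_{k+1})\lvert\nabla\varphi_{k+1}\rvert^2\,\mathrm{d}x$ strictly requires $W_\kappa'(\varphi_{k+1})\in H^1(\Omega)$, whereas membership in $\dom{\mathrm{D}\mathcal{E}_{\mathrm{pf},\kappa}}$ only gives $W_\kappa''(\varphi_{k+1})\lvert\nabla\varphi_{k+1}\rvert^2\in L^1$, so a standard truncation-and-limit argument for $W_\kappa'$ should be mentioned; and there is a harmless sign slip in your displayed identity, whose right-hand side should read $\mu_{k+1}+\tfrac{\kappa}{2}(\varphi_{k+1}+\varphi_k)$.
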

\begin{proof}
	Recall from \eqref{mean value in Omega} that $u_{\Omega}$ denotes the mean value of a function $u$ in $\Omega$. Hence, for $\varphi_{k+1}$, we write $\varphi_{k+1,\Omega}$ for the mean value. First, we test \eqref{discrete second CH} with $\zeta=(\varphi_{k+1}-{\varphi}_{k+1,\Omega})$ to obtain that
	\begin{align}
		&\int_{\Omega}\mu_{k+1}(\varphi_{k+1}-{\varphi}_{k+1,\Omega})\dd{x}
		+\int_{\Omega}\kappa\frac{\varphi_{k+1}+\varphi_{k}}{2}(\varphi_{k+1}-{\varphi}_{k+1,\Omega})\dd{x}
		\nonumber\\
		=-&\int_{\Omega}\Delta\varphi_{k+1}\cdot(\varphi_{k+1}-{\varphi}_{k+1,\Omega})\dd{x}
		+\int_{\Omega}W_{\kappa}^{\prime}(\varphi_{k+1})(\varphi_{k+1}-{\varphi}_{k+1,\Omega})\dd{x}.
		\label{tested discrete second CH}
	\end{align}
	Writing $\mu_{0}=\mu_{k+1}-{\mu}_{k+1,\Omega}$, we can see that  
	\begin{align}
		\int_{\Omega}\mu_{k+1}(\varphi_{k+1}-{\varphi}_{k+1,\Omega})\dd{x}
		=\int_{\Omega}\mu_{0}\varphi_{k+1} \dd{x}.
		\label{change mean value of phi to mean value of mu}
	\end{align}
    In view of the homogeneous Neumann boundary condition, notice that
	\begin{align}
		\abs{ -\int_{\Omega}\Delta\varphi_{k+1}(\varphi_{k+1}-{\varphi}_{k+1,\Omega})\dd{x}}
		=\abs{ \int_{\Omega}\nabla\varphi_{k+1}\cdot\nabla\varphi_{k+1} \dd{x}}=\lVert\nabla\varphi_{k+1}\rVert_{L^2}^{2}.
	\end{align}
	Besides, by the assumption $W_{\kappa}^{\prime}\in C^1(-1,1)$ and $\lim\limits_{s\to\pm1}W_{\kappa}^{\prime}(s)=\pm\infty$ in~\eqref{assumption on double well potential W}, we obtain that
	\begin{align}
		W_{\kappa}^{\prime}(\varphi_{k+1})(\varphi_{k+1}-\varphi_{k+1,\Omega})\geq C\abs{W_{\kappa}^{\prime}(\varphi_{k+1})}-\tilde{C}
		\label{pointwise lower bound on convex double well potential}
	\end{align}
	for all $\varphi_{k+1}\in[-1,1]$, see \cite[][Lemma 4.2]{abelsExistenceWeakSolutions2013} for details. Inserting \eqref{change mean value of phi to mean value of mu}-\eqref{pointwise lower bound on convex double well potential} into \eqref{tested discrete second CH} yields 
	\begin{align}
		\int_{\Omega}\abs{W_{\kappa}^{\prime}(\varphi_{k+1})}\dd{x}
		\leq& C(\lVert\mu_{0}\rVert_{L^2}\lVert\varphi_{k+1}\rVert_{L^2}
		+\int_{\Omega}\kappa\frac{\varphi_{k+1}+\varphi_{k}}{2}(\varphi_{k+1}-\varphi_{k+1,\Omega})\dd{x}
		+\lVert\nabla\varphi_{k+1}\rVert_{L^2}^{2}
		+1)
		\nonumber\\
		\leq& C\left(\lVert\nabla\mu\rVert_{L^2}+\lVert\nabla\varphi_{k+1}\rVert_{L^2}^{2}+\lVert\nabla\varphi_{k}\rVert_{L^2}^{2}+1\right),
	\end{align}
	where we used the fact that $\abs{\varphi_{k+1}},\abs{\varphi_{k}}\leq1$ and Poinar\'e's inequality. Next, by directly integrating \eqref{discrete second CH}, we can see that
	\begin{align*}
		\int_{\Omega}\mu_{k+1} \dd{x}
		+\int_{\Omega}\kappa\frac{\varphi_{k+1}+\varphi_{k}}{2}\dd{x}
		=\int_{\Omega}-\Delta\varphi_{k+1} \dd{x}
		+\int_{\Omega}W_{\kappa}^{\prime}(\varphi_{k+1})\dd{x}.
	\end{align*}
	This implies
	\begin{align}
		\abs{ \int_{\Omega}\mu_{k+1} \dd{x}}
		&\leq\int_{\Omega}\abs{W_{\kappa}^{\prime}(\varphi_{k+1})}\dd{x}
		+\int_{\Omega}\kappa \abs{\frac{\varphi_{k+1}+\varphi_{k}}{2}}\dd{x}
		\nonumber\\
		&\leq C\left(\lVert\nabla\mu_{k+1}\rVert_{L^2}+\lVert\nabla\varphi_{k+1}\rVert_{L^2}^{2}+\lVert\nabla\varphi_{k}\rVert_{L^2}^{2}+1\right),
		\label{estimate on the mean value of discrete mu}
	\end{align}
	where we used integration by parts,  Poincar\'e's inequality, and the fact that
	\begin{align*}
		\frac{1}{\abs{\Omega}}\int_{\Omega}\varphi_{k+1}\dd{x}=\frac{1}{\abs{\Omega}}\int_{\Omega}\varphi_{k}\dd{x}.
	\end{align*}  
	Finally, since $\mathrm{D} \mathcal{E}_{\mathrm{pf},\kappa}(\varphi_{k+1})=-\Delta\varphi_{k+1}+W_{\kappa}^{\prime}(\varphi_{k+1})=\mu_{k+1}+\frac{\kappa}{2}(\varphi_{k+1}+\varphi_{k})$, we obtain
	\begin{align}
		\lVert \mathrm{D} \mathcal{E}_{\mathrm{pf},\kappa}(\varphi_{k+1})\rVert_{L^2}
		\leq\lVert \mu_{k+1}\rVert_{L^2}+\frac{\kappa}{2}(\lVert\varphi_{k+1}\rVert_{L^2}+\lVert\varphi_{k}\rVert_{L^2})
		\leq C\left(\lVert\mu_{k+1}\rVert_{L^2}+1\right).
	\end{align}
	Besides, using \eqref{estimate on element in subdifferntial domain}, we deduce that
	\begin{align*}
		\lVert W_{\kappa}^{\prime}(\varphi_{k+1})\rVert_{L^2}^{2}
		\leq C\left(\lVert 
        \mathrm{D} \mathcal{E}_{\mathrm{pf},\kappa}(\varphi_{k+1})\rVert_{L^2}^2+\lVert\varphi_{k+1}\rVert_{L^2}^{2}+1\right)
		\leq C\left(\lVert\mu_{k+1}\rVert_{L^2}+1\right)^2,
	\end{align*}
	which implies
	\begin{align}
		\lVert W_{\kappa}^{\prime}(\varphi_{k+1})\rVert_{L^2}&\leq C\left(\lVert\mu_{k+1}\rVert_{L^2}+1\right)
		\leq C\left(\lVert\nabla\mu_{k+1}\rVert_{L^2}+\abs{\int_{\Omega}\mu_{k+1} \dd{x}}+1\right).
		\label{L2 upper bound on convex double well potential}
	\end{align}
	Therefore, combining \eqref{L2 upper bound on convex double well potential} with \eqref{estimate on the mean value of discrete mu}  yields the desired inequality, that is 
	\begin{align*}
		\lVert W_{\kappa}^{\prime}(\varphi_{k+1})\rVert_{L^2}+\abs{\int_{\Omega}\mu_{k+1} \dd{x}}
		\leq&C\left(\lVert\nabla\mu_{k+1}\rVert_{L^2}+\lVert\nabla\varphi_{k+1}\rVert_{L^2}^2+\lVert\nabla\varphi_{k}\rVert_{L^2}^{2}+1\right).
	\end{align*}
\end{proof}

Now, we show the existence of solutions to the time discrete problem \eqref{time discrete problem}. We adapt the proof of \cite[][Lemma 4.3]{abelsExistenceWeakSolutions2013} to our case. Notice that in~\cite[][Lemma 4.3]{abelsExistenceWeakSolutions2013},  the extra stress tensor $S$ is not present, while, below the stress tensor $S$ will be the main difficulty because of the set-valued subdifferential.
\begin{lem}[Existence of solutions to the time discrete problem]\label{Existence of solution to the time discrete problem}
	For $k\in\{0,1,\ldots,N-1\}$, let $v_{k}\in L_{\mathrm{div}}^{2}(\Omega)$, $S_{k}\in L_{\mathrm{sym,Tr}}^{2}(\Omega)$, $\varphi_{k}\in H^{2}(\Omega)$ with $\abs{\varphi_{k}}\leq1$ and $\rho_{k}=\frac{\rho_{2}-\rho_{1}}{2}\varphi_{k}+\frac{\rho_{2}+\rho_{1}}{2}$ be given, let $\widetilde{\mathcal{P}}$ be as in \eqref{restriction of dissipation potential}, and set
    \begin{equation}\label{fixed point theorem preimage space}
        X:=H_{0,\mathrm{div}}^{1}(\Omega)\times H_{\mathrm{sym,Tr}}^{1}(\Omega)\cap\dom{\partial\widetilde{\mathcal{P}}(\varphi_{k};\cdot)}\times\dom{\mathrm{D}\mathcal{E}_{\mathrm{pf},\kappa}}\times H_{\vec{n}}^{2}(\Omega).
    \end{equation}
    Then there exists a quadruplet $(v_{k+1},S_{k+1},\varphi_{k+1},\mu_{k+1})\in X$ solving \eqref{discrete velocity}-\eqref{discrete second CH}. Moreover, this solution satisfies the energy dissipation estimate
    \begin{equation}
        \begin{aligned}
            &\mathcal{E}_{\mathrm{tot}}(v_{k+1},S_{k+1},\varphi_{k+1})
            +h\mathcal{D}_{\mathrm{chs}}(v_{k+1},\varphi_{k},\mu_{k+1})+h\mathcal{D}_{\mathrm{sd},\gamma}(S_{k+1})+h\langle \xi_{k+1}^{k},S_{k+1} \rangle_{H_{\mathrm{sym,Tr}}^{1}}
            \\
            \leq&\mathcal{E}_{\mathrm{tot}}(v_{k},S_{k},\varphi_{k})
        +h\langle f_{k+1}, v_{k+1}\rangle_{H^{1}}.
		\label{discrete total energy estimate}
        \end{aligned}
    \end{equation}
\end{lem}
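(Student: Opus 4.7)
The plan is to apply a Leray-Schauder fixed-point argument in the Banach space $L_{\mathrm{div}}^{2}(\Omega)\times H^{1}(\Omega)$, decoupling the coupled system~\eqref{discrete velocity}--\eqref{discrete second CH} into three subproblems. To handle the set-valued subdifferential $\partial\widetilde{\mathcal{P}}(\varphi_{k};\cdot)$, I would first employ a Moreau-Yosida regularization $\widetilde{\mathcal{P}}_{\lambda}$, obtain a solution of the regularized system for each $\lambda>0$, and then pass to the limit $\lambda\to 0$ via Minty's trick. The $H^{1}$-bound on $S_{k+1}$ provided by the stress diffusion $-\gamma\Delta S$ is the crucial source of compactness throughout.

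For the fixed-point map I would take $(\bar{v},\bar{\varphi})\in L_{\mathrm{div}}^{2}(\Omega)\times H^{1}(\Omega)$ and proceed as follows. First, solve the Cahn-Hilliard subproblem~\eqref{discrete first CH}--\eqref{discrete second CH} for $(\varphi_{k+1},\mu_{k+1})$ with $v_{k+1}$ frozen to $\bar{v}$; this is standard for a singular potential, treatable by minimizing a strictly convex functional involving $\mathcal{E}_{\mathrm{pf},\kappa}$, where Lemma~\ref{estimate from discrete second CH} supplies the required bounds on $\mu_{k+1}$ and $W_{\kappa}^{\prime}(\varphi_{k+1})$, and estimate~\eqref{estimate on element in subdifferntial domain} ensures $\varphi_{k+1}\in\dom{\mathrm{D}\mathcal{E}_{\mathrm{pf},\kappa}}$. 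Second, solve the stress equation~\eqref{discrete stress} for $S_{k+1}$ with $v_{k+1}=\bar{v}$ and $\partial\widetilde{\mathcal{P}}$ replaced by $\nabla\widetilde{\mathcal{P}}_{\lambda}$; existence follows from surjectivity of the maximal monotone perturbation of the $L^{2}$-coercive operator $S\mapsto S/h-\gamma\Delta S$, using that the antisymmetric terms $\bar{v}\cdot\nabla S$ and $S\skw{\nabla\bar{v}}-\skw{\nabla\bar{v}}S$ do not spoil coercivity, since they vanish when tested against $S$. Third, solve the momentum balance~\eqref{discrete velocity} for $v_{k+1}$ given $(\varphi_{k+1},\mu_{k+1},S_{k+1})$ by Lax-Milgram after linearizing the convection as $\rho_{k}\bar{v}\otimes v_{k+1}$. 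The skew-symmetrizing correction $\bigl(\divge{J_{k+1}}-(\rho_{k+1}-\rho_{k})/h-v_{k+1}\cdot\nabla\rho_{k}\bigr)v_{k+1}/2$ is designed precisely so that testing by $v_{k+1}$ yields a coercive bilinear form without invoking the exact continuity equation.

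The a priori estimate required for Leray-Schauder coincides with the claimed energy inequality~\eqref{discrete total energy estimate}: test the momentum balance by $v_{k+1}$, the stress equation by $S_{k+1}$, equation~\eqref{discrete first CH} by $\mu_{k+1}$, and~\eqref{discrete second CH} by $(\varphi_{k+1}-\varphi_{k})/h$, and sum. The Zaremba-Jaumann commutator terms cancel identically; the convective terms together with the skew-symmetrizing correction produce $\mathcal{E}_{\mathrm{kin}}(\varphi_{k+1},v_{k+1})-\mathcal{E}_{\mathrm{kin}}(\varphi_{k},v_{k})$ up to a nonnegative remainder; the coupling $\int_{\Omega}\mu_{k+1}\nabla\varphi_{k}\cdot v_{k+1}\dd{x}$ cancels between the momentum and first Cahn-Hilliard equations; and convexity of $\mathcal{E}_{\mathrm{pf},\kappa}$ yields $\mathcal{E}_{\mathrm{pf},\kappa}(\varphi_{k+1})-\mathcal{E}_{\mathrm{pf},\kappa}(\varphi_{k})\leq\mathrm{D}\mathcal{E}_{\mathrm{pf},\kappa}(\varphi_{k+1})(\varphi_{k+1}-\varphi_{k})$. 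The same test is what yields the Leray-Schauder bound, so existence and the energy inequality come out of a single computation.

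The main obstacle will be the passage to the limit $\lambda\to 0$ in the Moreau-Yosida regularization: identifying the limit $\xi_{k+1}^{k}\in\partial\widetilde{\mathcal{P}}(\varphi_{k};S_{k+1})$ requires strong $L^{2}$-compactness of $S^{\lambda}$, obtained from the uniform $\gamma$-diffusion bound via Rellich's theorem, together with the lower semicontinuity of $\widetilde{\mathcal{P}}$ and Minty's trick. A further delicate point is the nonlinear Zaremba-Jaumann coupling $S\skw{\nabla v}$ within the fixed-point iteration, which needs at least one of the factors to yield compactness -- again supplied by $\gamma>0$ via the $H^{1}$-bound on $S$. This explains why stress diffusion is essential at this stage, and why dropping it in the $\gamma\to 0$ limit demands the completely different, relative-energy-based framework of Section~\ref{section dissipative solutions}.
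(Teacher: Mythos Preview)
Your overall strategy is a valid alternative to the paper's, but there is a technical gap in the fixed-point setup and a notable difference in how the subdifferential is treated.

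\textbf{The gap.} You place the fixed-point variable $\bar{v}$ in $L^{2}_{\mathrm{div}}(\Omega)$ and then freeze $v_{k+1}=\bar{v}$ in the stress equation. With $\bar{v}\in L^{2}$ only, the Jaumann term $S\skw{\nabla\bar{v}}-\skw{\nabla\bar{v}}S$ is not even defined (there is no $\nabla\bar{v}$), and the convection $\bar{v}\cdot\nabla S$ is merely in $L^{1}$, so testing with $\Psi=S\in H^{1}\hookrightarrow L^{6}$ in three dimensions is not justified. The second subproblem is therefore ill-posed as written. You need $\bar{v}$ to carry at least $H^{1}$-regularity, or to reorder the iteration so that the stress equation always receives an $H^{1}$-velocity produced by the momentum step; either fix is routine, but the proposal as stated does not close.

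\textbf{Comparison with the paper.} The paper also uses Schaefer's theorem but with a different decomposition: rather than three sequential subproblems, it writes the full system as $\mathscr{L}_{k}(w)=\mathscr{F}_{k}(w)$, where $\mathscr{L}_{k}$ bundles the principal parts --- the Stokes operator, $\gamma(-\Delta+I)+\partial\widetilde{\mathcal{P}}(\varphi_{k};\cdot)$, the Neumann Laplacian on $\mu$, and $\varphi+\mathrm{D}\mathcal{E}_{\mathrm{pf},\kappa}$ --- and $\mathscr{F}_{k}$ collects all lower-order and coupling terms. The subdifferential sits inside $\mathscr{L}_{k}$ and its invertibility follows in one stroke from Minty--Browder, so no Moreau--Yosida regularization and no $\lambda\to 0$ passage are needed; your route is correct but carries an extra layer. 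The fixed-point map $\mathscr{K}_{k}=\mathscr{F}_{k}\circ\mathscr{L}_{k}^{-1}$ acts on the single space $\tilde{Y}=L^{3/2}\times L^{3/2}\times W^{1,3/2}\times H^{1}$, and since $\mathscr{L}_{k}^{-1}$ always outputs an $H^{1}$-velocity before $\mathscr{F}_{k}$ evaluates the nonlinearities, the regularity mismatch you run into never arises. Your derivation of the energy estimate~\eqref{discrete total energy estimate} matches the paper's Step~1 essentially verbatim.
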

\begin{proof}
	\textbf{Step 1: A priori estimate~\eqref{discrete total energy estimate}.}
	Let $(v_{k+1},S_{k+1},\varphi_{k+1},\mu_{k+1})\in  H_{0,\mathrm{div}}^{1}(\Omega)\times H_{\mathrm{sym,Tr}}^{1}(\Omega)\cap \dom{\partial\widetilde{\mathcal{P}}(\varphi_{k};\cdot)}\times\dom{\mathrm{D} \mathcal{E}_{\mathrm{pf},\kappa}}\times H_{\vec{n}}^{2}(\Omega)$ be a solution to the time discrete problem \eqref{time discrete problem}. Observe that $v_{k+1}\in H_{0,\mathrm{div}}^{1}(\Omega)$ is a suitable test function for \eqref{discrete velocity}. With this in mind, we calculate that
	\begin{equation}
		\int_{\Omega}\left((\divge{J_{k+1}})\frac{v_{k+1}}{2}+J_{k+1}\cdot\nabla v_{k+1}\right)v_{k+1}\dd{x}
		=\int_{\Omega}\divge{J_{k+1}\frac{|v_{k+1}|^2}{2}}\dd{x}=0,
		\label{eqaulity involving discrete divergence of J k+1}
	\end{equation}
	as well as
	\begin{equation}
        \begin{aligned}
		&\int_{\Omega}\left(\divge{\rho_{k}v_{k+1}\otimes v_{k+1}}-(\nabla\rho_{k}\cdot v_{k+1})\frac{v_{k+1}}{2}\right)v_{k+1} \dd{x}
		\\
		=&\int_{\Omega}\left(\divge{\rho_{k}v_{k+1}\otimes v_{k+1}}-\divge{\rho_{k}v_{k+1}}\frac{v_{k+1}}{2}\right)v_{k+1} \dd{x}
		\\
		=&\int_{\Omega}\left(\divge{\rho_{k}v_{k+1}}|v_{k+1}|^2+\rho_{k}v_{k+1}\cdot\nabla(\frac{|v_{k+1}|^2}{2})-\divge{\rho_{k}v_{k+1}}\frac{|v_{k+1}|^2}{2}\right)\dd{x}
		\\
		=&\int_{\Omega}\divge{\rho_{k}v_{k+1}\frac{|v_{k+1}|^2}{2}}\dd{x}=0,
		\label{eqaulity involving discrete rho k+1 and v k+1 otimes v k+1}
	   \end{aligned}
    \end{equation}
	where we used integration-by-parts and the homogeneous boundary condition \eqref{two phase system alternative homogeneous boundary}. Moreover, notice that
	\begin{equation}
    \begin{aligned}
		&\frac{1}{h}(\rho_{k+1} v_{k+1}-\rho_{k}v_{k})v_{k+1}
		\\
		=&\frac{1}{h}(\rho_{k+1}-\rho_{k})\abs{v_{k+1}}^{2}
		+\frac{1}{h}\rho_{k}(v_{k+1}-v_{k})v_{k+1}
		\\
		=&\frac{1}{h}(\rho_{k+1}-\rho_{k})\abs{v_{k+1}}^{2}
		+\frac{1}{h}\rho_{k}\frac{\abs{v_{k+1}}^2}{2}
		-\frac{1}{h}\rho_{k}\frac{\abs{v_{k}}^2}{2}
		+\frac{1}{h}\rho_{k}\frac{\abs{v_{k+1}-v_{k}}^2}{2}
		\\
		=&\frac{1}{h}\frac{\rho_{k+1}\abs{v_{k+1}}^2}{2}-\frac{1}{h}\frac{\rho_{k}\abs{v_{k}}^2}{2}+\frac{1}{h}\frac{(\rho_{k+1}-\rho_{k})\abs{v_{k+1}}^2}{2}+\frac{1}{h}\frac{\rho_{k}\abs{v_{k+1}-v_{k}}^2}{2}.
		\label{equality involving difference bewteen discrete momemtum k+1 and momentum k}
	\end{aligned}
    \end{equation}
	Thanks to \eqref{eqaulity involving discrete divergence of J k+1}, \eqref{eqaulity involving discrete rho k+1 and v k+1 otimes v k+1} and \eqref{equality involving difference bewteen discrete momemtum k+1 and momentum k}, by testing \eqref{discrete velocity} with $\Phi=v_{k+1}$, we obtain
	\begin{equation}
	\begin{aligned}
		&\int_{\Omega}\frac{1}{h}\frac{\rho_{k+1}|v_{k+1}|^{2}}{2} \dd{x}
		+\int_{\Omega}\frac{1}{h}\frac{\rho_{k}|v_{k+1}-v_{k}|^{2}}{2} \dd{x}
		\\
		&+\int_{\Omega}2\nu(\varphi_{k})|\sym{\nabla v_{k+1}}|^{2} \dd{x}
		+\int_{\Omega}\eta(\varphi_{k})S_{k+1}:\sym{\nabla v_{k+1}} \dd{x} 
		\\
		=&\int_{\Omega}\mu_{k+1}(\nabla\varphi_{k}\cdot v_{k+1}) \dd{x}
		+\int_{\Omega}\frac{1}{h}\frac{\rho_{k}|v_{k}|^2}{2}\dd{x}
        +\langle f_{k+1}, v_{k+1}\rangle_{H^{1}}.
		\label{discrete velocity energy}
	\end{aligned}
	\end{equation}
	Next, observe that $S_{k+1}\in H_{\mathrm{sym,Tr}}^{1}(\Omega)$ is a suitable test function for \eqref{discrete stress}. For the first term in~\eqref{discrete stress}, we calculate that 
    \begin{equation}
		\frac{1}{h}(S_{k+1}-S_{k}):S_{k+1}
		=\frac{1}{h}\frac{\abs{S_{k+1}}^2}{2}-\frac{1}{h}\frac{\abs{S_{k}}^2}{2}+\frac{1}{h}\frac{\abs{S_{k+1}-S_{k}}^2}{2}.
		\label{equality involving differnece between discete stress k+1 and stress k}
	\end{equation}
	With the help of \eqref{equality involving differnece between discete stress k+1 and stress k} and due to the fact that many terms cancel out, since $v$ is divergence-free and $S$ is symmetric, we also obtain 
    \begin{equation}
    \begin{aligned}
		&\int_{\Omega}\frac{1}{h}\frac{|S_{k+1}|^2}{2}\dd{x}
		+\int_{\Omega}\frac{1}{h}\frac{|S_{k+1}-S_{k}|}{2}\dd{x}
		+\langle \xi_{k+1}^{k}:S_{k+1} \rangle_{H_{\mathrm{sym,Tr}}^{1}}
		+\int_{\Omega}\gamma|\nabla S_{k+1}|^{2} \dd{x}
		\\
		=&\int_{\Omega}\eta(\varphi_{k})\sym{\nabla v_{k+1}}:S_{k+1} \dd{x}
		+\int_{\Omega}\frac{1}{h}\frac{|S_{k}|^2}{2}\dd{x}.
		\label{discrete stess energy}
	\end{aligned}
    \end{equation}
	Further, observe that $\mu_{k+1}\in H_{\vec{n}}^{2}(\Omega)$ and $\frac{1}{h}(\varphi_{k+1}-\varphi_{k})\in H^{2}(\Omega)$ are suitable test functions for \eqref{discrete first CH} and \eqref{discrete second CH}, respectively. With this test, we obtain
	\begin{equation}
		\int_{\Omega}\frac{1}{h}(\varphi_{k+1}-\varphi_{k})\mu_{k+1} \dd{x}
		+\int_{\Omega}(v_{k+1}\cdot\nabla\varphi_{k})\mu_{k+1} \dd{x}
		=-\int_{\Omega}|\nabla\mu_{k+1}|^{2} \dd{x}, 
		\label{discrete firsr CH energy}
	\end{equation}
	as well as
	\begin{equation}
	\begin{aligned}
		&\int_{\Omega}\frac{1}{h}\nabla\varphi_{k+1}\cdot(\nabla\varphi_{k+1}-\nabla\varphi_{k})\dd{x}
		+\int_{\Omega}W_{0}^{\prime}(\varphi_{k+1})\frac{\varphi_{k+1}-\varphi_{k}}{h}\dd{x}
		\\
		=&\int_{\Omega}\mu_{k+1}\frac{\varphi_{k+1}-\varphi_{k}}{h}\dd{x}
		+\int_{\Omega}\kappa\frac{\varphi_{k+1}^{2}-\varphi_{k}^{2}}{2h}\dd{x}.
		\label{discrete second CH energy}
	\end{aligned}
	\end{equation}
	Now summing up \eqref{discrete velocity energy}-\eqref{discrete second CH energy}, we derive
    \begin{equation*}
    \begin{aligned}
		&\int_{\Omega}\frac{1}{h}\frac{\rho_{k}\abs{v_{k}}^2}{2} \dd{x}
		+\int_{\Omega}\frac{1}{h}\frac{\abs{S_{k}}^2}{2} \dd{x}
        +\langle f_{k+1},v_{k+1}\rangle_{H^{1}}
		\\
		=&\int_{\Omega}\frac{1}{h}\frac{\rho_{k+1}\abs{v_{k+1}}^2}{2} \dd{x}
		+\int_{\Omega}\frac{1}{h}\frac{\rho_{k}\abs{v_{k+1}-v_{k}}^2}{2} \dd{x}
		+\int_{\Omega}\frac{1}{h}\frac{\abs{S_{k+1}}^2}{2} \dd{x}
		+\int_{\Omega}\frac{1}{h}\frac{\abs{S_{k+1}-S_{k}}^2}{2} \dd{x}
		\\
		&+\int_{\Omega}2\nu(\varphi_{k})\abs{\sym{\nabla v_{k+1}}}^2 \dd{x}
		+\int_{\Omega}\gamma\abs{\nabla S_{k+1}}^2 \dd{x}
		+\langle \xi_{k+1}^{k},S_{k+1} \rangle_{H_{\mathrm{sym,Tr}}^{1}}
		+\int_{\Omega}\abs{\nabla\mu_{k+1}}^2 \dd{x}
		\\
		&+\int_{\Omega}W_{\kappa}^{\prime}(\varphi_{k+1})\frac{\varphi_{k+1}-\varphi_{k}}{h} \dd{x}
		-\int_{\Omega}\kappa\frac{\varphi_{k+1}^2-\varphi_{k}^2}{2h} \dd{x}
		+\frac{1}{h}\int_{\Omega}\nabla\varphi_{k+1}\cdot(\nabla\varphi_{k+1}-\nabla\varphi_{k}) \dd{x}
		\\
		\geq&\int_{\Omega}\frac{1}{h}\frac{\rho_{k+1}\abs{v_{k+1}}^2}{2} \dd{x}
		+\int_{\Omega}\frac{1}{h}\frac{\rho_{k}\abs{v_{k+1}-v_{k}}^2}{2} \dd{x}
		+\int_{\Omega}\frac{1}{h}\frac{\abs{S_{k+1}}^2}{2} \dd{x}
		+\int_{\Omega}\frac{1}{h}\frac{\abs{S_{k+1}-S_{k}}^{2}}{2} \dd{x}
		\\
		&+\int_{\Omega}2\nu(\varphi_{k})\abs{\sym{\nabla v}}^{2} \dd{x}
		+\int_{\Omega}\gamma\abs{\nabla S_{k+1}}^{2} \dd{x}
		+\langle\xi_{k+1}^{k}:S_{k+1} \rangle_{H_{\mathrm{sym,Tr}}^{1}}
		+\int_{\Omega}\abs{\nabla\mu_{k+1}}^{2} \dd{x}
		\\
		&+\frac{1}{h}\int_{\Omega}W_{\kappa}(\varphi_{k+1})-\kappa\frac{\varphi_{k+1}^{2}}{2} \dd{x}
		-\frac{1}{h}\int_{\Omega}W_{\kappa}(\varphi_{k})-\kappa\frac{\varphi_{k}^{2}}{2} \dd{x}
		\\
		&+\frac{1}{h}\int_{\Omega}\frac{\abs{\nabla\varphi_{k+1}-\nabla\varphi_{k}}^{2}}{2} \dd{x}
		+\frac{1}{h}\int_{\Omega}\frac{\abs{\nabla\varphi_{k+1}}^{2}}{2} \dd{x}
		-\frac{1}{h}\int_{\Omega}\frac{\abs{\nabla\varphi_{k}}^{2}}{2} \dd{x},
	\end{aligned}
    \end{equation*}
	where we have used the convexity of $W_{\kappa}$, i.e.,
	\begin{equation*}
		W_{\kappa}^{\prime}(\varphi_{k+1})(\varphi_{k+1}-\varphi_{k})
		\geq W_{\kappa}(\varphi_{k+1})
		-W_{\kappa}(\varphi_{k}),
	\end{equation*}
	and
	\begin{equation*}
		\nabla\varphi_{k+1}\cdot(\nabla\varphi_{k+1}-\nabla\varphi_{k})
		=\frac{\abs{\nabla\varphi_{k+1}}^{2}}{2}
		-\frac{\abs{\nabla\varphi_{k}}^2}{2}
		+\frac{\abs{\nabla\varphi_{k+1}-\nabla\varphi_{k}}^2}{2}.
	\end{equation*}
	Multiplying both sides by $h$ and rearranging terms to the left-hand side results in \eqref{discrete total energy estimate}.\\
	\paragraph{Step 2: Existence result via Schaefer's fixed-point theorem~\cite[][Chapter 9.2.2, Theorem 2]{evansPartialDifferentialEquations2010}.} Suppose that 
    \begin{equation*}
        \mathscr{K}_{k}:\tilde{Y}\to\tilde{Y}
    \end{equation*}
    is a continuous and compact mapping. Assume further that the set
    \begin{equation*}
        \left\{u\in\tilde{Y}:u=\lambda\mathscr{K}_{k}(u)\text{ for some }0\leq\lambda\leq1\right\}
    \end{equation*}
    is bounded. Then $\mathscr{K}_{k}$ has a fixed point. In order to apply Schaefer's fixed point theorem, we will determine $\tilde{Y}$ and the operator $\mathscr{K}_{k}$ based on the discrete weak formulation~\eqref{time discrete problem}. For this, using $X$ from~\eqref{fixed point theorem preimage space} and the space
	\begin{equation}\label{fixed point theorem image space}
	    Y:=\left(H_{0,\mathrm{div}}^{1}(\Omega)\right)^{\prime}
        \times\left(H_{\mathrm{sym,Tr}}^{1}(\Omega)\right)^{\prime}
        \times L^{2}(\Omega)
        \times L^{2}(\Omega),
	\end{equation}
    we define operators $\mathscr{L}_{k},\mathscr{F}_{k}:X\to Y$ as follows:
	\begin{equation}\label{operator L_k}
	    \mathscr{L}_{k}:X\to Y,\,w:=(v,S,\varphi,\mu)\mapsto\mathscr{L}_{k}(w):=\begin{pmatrix}
			L_{k}^{v}(v)\\
			L_{k}^{s}(S)\\
			-\Delta\mu+\int_{\Omega}\mu \dd{x}\\
			\varphi+\mathrm{D} \mathcal{E}_{\mathrm{pf},\kappa}(\varphi)
		\end{pmatrix}
	\end{equation}
	where $L_{k}^{v}(v)$ and $L_{k}^{s}(S)$ are defined in the weak sense, i.e.,
	\begin{align}
		\langle L_{k}^{v}(v),\Phi\rangle
        &=\langle 2\nu(\varphi_{k})\sym{\nabla v},\sym{\nabla\Phi} \rangle_{L^{2}}
        -\langle f_{k+1},\Phi \rangle_{H^{1}}
		\\
		\langle L_{k}^{s}(S),\Psi\rangle
        &=\langle \gamma S, \Psi \rangle_{H^{1}}  
        +\langle \xi^{k},\Psi \rangle_{H_{\mathrm{sym,Tr}}^{1}} 
        \text{ with }\xi^{k}\in\partial\widetilde{\mathcal{P}}(\varphi_{k};S)
	\end{align}
    for all test functions $\Phi\in H_{0,\mathrm{div}}^{1}(\Omega)$ and $\Psi\in H_{\mathrm{sym,Tr}}^{1}({\Omega})$, while the third and forth entries in\eqref{operator L_k} are identified pointwise. We further introduce $\mathscr{F}_{k}$ as follows 
	\begin{equation}
	    \mathscr{F}_{k}:X\to Y,\,w:=(v,S,\varphi,\mu)\mapsto\mathscr{F}_{k}(w),
	\end{equation}
    where
    \begin{equation}\label{operator F_k}
    \scalebox{0.93}{$\mathscr{F}_{k}(w):=\begin{pmatrix}
			-\frac{\rho v-\rho_{k}v_{k}}{h}-\divge{\rho_{k}v\otimes v}+\mu\nabla\varphi_{k}-\left(\divge{J}-\frac{\rho-\rho_{k}}{h}-v\cdot\nabla\rho_{k}\right)\frac{v}{2}-J\cdot\nabla v-\divge{\eta(\varphi_{k})S}
            \\
			-\frac{S-S_{k}}{h}-v\cdot\nabla S-S\skw{\nabla v}+\skw{\nabla v}S+\eta(\varphi_{k})\sym{\nabla v}+\gamma S
            \\
			-\frac{\varphi-\varphi_{k}}{h}-v\cdot\nabla\varphi_{k}+\int_{\Omega}\mu \dd{x}
            \\
			\varphi+\mu+\kappa\frac{\varphi+\varphi_{k}}{2}
		\end{pmatrix}.$}
    \end{equation}
	From these two definitions, we can see that $w=(v_{k+1},S_{k+1},\varphi_{k+1},\mu_{k+1})$ is a weak solution to \eqref{discrete velocity}-\eqref{discrete second CH} if and only if
	\begin{align}
		\mathscr{L}_{k}(w)-\mathscr{F}_{k}(w)=0.
		\label{weak solution in operator sense}
	\end{align}
    \paragraph{Properties of $\mathscr{L}_{k}$.}
    
	Now we want to prove the invertibility of the operator $\mathscr{L}_{k}$. For the first entry, we can derive the invertibility and continuity of the inverse with help of the Lax-–Milgram theorem. To show the invertibility, for all $\tilde{f}\in (H_{0,\mathrm{div}}^{1}(\Omega))^{\prime}$, we want to prove the existence of a unique $v\in H_{0,\mathrm{div}}^{1}(\Omega)$ such that $-\divge{2\nu(\varphi_{k})\sym{\nabla v}}-f_{k+1}=\tilde{f}$. Since $f_{k+1}\in H^{-1}(\Omega)\subseteq(H_{0,\mathrm{div}}^{1}(\Omega))^{\prime}$. There holds $\bar{f}:=\tilde{f}+f_{k+1}\in (H_{0,\mathrm{div}}^{1}(\Omega))^{\prime}$. Observing that the operator $-\divge{2\nu(\varphi_{k})\cdot}:H_{0,\mathrm{div}}^{1}(\Omega)\to(H_{0,\mathrm{div}}^{1}(\Omega))^{\prime}$ induces a continuous, coercive bilinear form, the Lax--Milgram theorem yields the invertibility and continuity of the inverse. For the second entry, notice that it can be viewed as the sum of a maximal monotone operator and the duality map. Hence, we can conclude the invertibility by Minty–-Browder Theorem see~\cite[][Theorem 2.2]{zbMATH05662098}. To see the continuity of the inverse, let $F_{n}=-\gamma\Delta S_{n}+\xi_{n}^{k}+\gamma S_{n}$, $F=-\gamma\Delta S+\xi^{k}+\gamma S$ and $F_{n}\to F$ in $(H^{1}(\Omega))^{\prime}$. For all $n\in\mathbb{N}$, observe that
    \begin{align*}
        \gamma\norm{S_{n}-S}_{H^{1}}^{2}
        &\leq \gamma\langle S_{n}- S, S_{n}- S \rangle_{H^{1}}
        +\langle \xi_{n}^{k}-\xi^{k} ,S_{n}-S \rangle_{H_{\mathrm{sym,Tr}}^{1}}
        \\
        &=\langle F_{n}-F , S_{n}-S \rangle_{H_{\mathrm{sym,Tr}}^{1}}
        \leq \frac{1}{2\gamma}\norm{F_{n}-F}_{(H_{\mathrm{sym,Tr}}^{1})^{\prime}}^{2}+\frac{\gamma}{2}\norm{S_{n}-S}_{H^{1}}^{2}.
    \end{align*} 
    By rearranging terms, one can see that the inverse operator is continuous. For the third entry, let us consider the following elliptic equation
	\begin{equation}
		\begin{cases}
			-\Delta u+\int_{\Omega}u \dd{x}=g&\mbox{ in }\Omega,
			\\
			\vec{n}\cdot\nabla u|_{\partial\Omega}=0&\mbox{ on }\partial\Omega,
		\end{cases}
		\label{operator equation}
	\end{equation} 
	where $g\in L^2(\Omega)$ is a given function. The invertibility of the operator represented by the third entry is equivalent to the existence of a unique weak solution $u\in H_{\vec{n}}^{2}(\Omega):=\{u\in H^{2}(\Omega):\vec{n}\cdot\nabla u|_{\partial\Omega}=0\}$ for any given $f\in L^{2}(\Omega)$ and this can be guaranteed by~\cite[][Chapter 2]{ellipticboundary}. Moreover, one can also derive that
    \begin{align}
		\lVert\mu\rVert_{H^{2}}\leq C\left(\lVert\mu\rVert_{H^{1}}+\lVert g\rVert_{L^2}\right).
		\label{estimate coming from regularity}
	\end{align}
    This gives the continuity of the inverse operator.
    For the last component of $\mathscr{L}_{k}$, notice that $\mathrm{D} \mathcal{E}_{\mathrm{pf},\kappa}$ is a maximal monotone operator. Again, by Minty–-Browder Theorem, we have the invertibility. Moreover, we want to derive the continuity of the inverse operator. To do so, we interpret the inverse operator as a mapping $L^{2}(\Omega)\to H^{2-s}(\Omega)$ for arbitrary $0<s<1/4$. Let $F_{k}=u_{k}+\mathrm{D} \mathcal{E}_{\mathrm{pf},\kappa}(u_{k})$ and $F=u+\mathrm{D} \mathcal{E}_{\mathrm{pf},\kappa}(u)$ be given. Assume $F_{k}\to F$ in $L^{2}(\Omega)$, then
	\begin{align*}
		\lVert u_{k}-u\rVert_{L^2}^{2}+\lVert \nabla u_{k}-\nabla u\rVert_{L^2}^{2}
        &\leq \lVert u_{k}-u\rVert_{L^2}^{2}+\langle \mathrm{D} \mathcal{E}_{\mathrm{pf},\kappa}(u_{k})-\mathrm{D} \mathcal{E}_{\mathrm{pf},\kappa}(u),u_{k}-u\rangle_{L^{2}}
		\\
		&\leq\lVert u_{k}+\mathrm{D} \mathcal{E}_{\mathrm{pf},\kappa}(u_{k})
        -u-\mathrm{D} \mathcal{E}_{\mathrm{pf},\kappa}(u)\rVert_{L^2}\cdot\lVert u_{k}-u\rVert_{L^2}
		\\
		&\leq \frac{1}{2}\lVert F_{k}-F\rVert_{L^{2}}^{2}+\frac{1}{2}\lVert u_{k}-u\rVert_{L^2}^{2}.
	\end{align*}
	This shows that $u_{k}\to u$ in $H^{1}(\Omega)$. Besides, due to \eqref{estimate on element in subdifferntial domain}, $(u_{k})_{k}$ is bounded in $H^{2}(\Omega)$. Then, by interpolation, we have an inequality of the form 
	\begin{align*}
		\lVert u_{k}-u\rVert_{H^{2-s}}\leq C\lVert u_{k}-u\rVert_{H^{2}}^{1-s}\lVert u_{k}-u\rVert_{H^{1}}^{s},
	\end{align*}
	which implies that $u_{k}\to u$ in $H^{2-s}(\Omega)$.
    
	Altogether, we now have the invertibility of $\mathscr{L}_{k}:X\to Y$ and write the inverse operator as $\mathscr{L}_{k}^{-1}:Y\to X$. But for the continuity and even compactness of the inverse operator, we need to introduce two refined Banach spaces:
	\begin{subequations}
    \begin{align}
		\tilde{X}&:=H_{0,\mathrm{div}}^{1}(\Omega)
        \times H_{\mathrm{sym,Tr}}^{1}(\Omega)
        \times H^{2-s}(\Omega)
        \times H_{\vec{n}}^{2}(\Omega),
		\label{fixed point theorem refined preimage space}\\
		\tilde{Y}&:= L^{\frac{3}{2}}(\Omega)^{3}\times L^{\frac{3}{2}}(\Omega)^{3\times 3}\times W^{1,\frac{3}{2}}(\Omega)\times H^{1}(\Omega).
        \label{fixed point theorem refined image space}
	\end{align}
    \end{subequations}
	where $0<s<1/4$. From above arguments, we know that $\mathscr{L}_{k}^{-1}:Y\to\tilde{X}$ is continuous. Since $\tilde{Y}\hookrightarrow\hookrightarrow Y$, the restriction $\mathscr{L}_{k}^{-1}:\tilde{Y}\to\tilde{X}$ is compact.
    \paragraph{Properties of $\mathscr{F}_{k}$.}
	Now, let us consider the operator $\mathscr{F}_{k}$. We want to derive that $\mathscr{F}_{k}:\tilde{X}\to\tilde{Y}$ is continuous and that it maps bounded sets to bounded sets. To this end, let $(v,S,\varphi,\mu)\in\tilde{X}$, and we deduce the following estimates for the different components of $\mathscr{F}_{k}$:\\
    We first discuss the terms in the first line of $\mathscr{F}_{k}$. Since $v\in H^{1}(\Omega)\hookrightarrow L^{6}(\Omega)$, we obtain
	\begin{align*}
		\lVert \rho v\rVert_{L^{\frac{3}{2}}}&\leq \lVert \rho\rVert_{L^{2}}\lVert v\rVert_{L^{6}}\leq C\lVert v\rVert_{H^{1}}\left(\lVert\varphi\rVert_{L^{2}}+1\right).
	\end{align*}
	Notice that $\divge{\rho_{k}v\otimes v}$ contains terms of the form $\rho_{k}(\partial_{x_{l}}v_{i})v_{j}$ and $(\partial_{x_{l}}\rho_{k})v_{i}v_{j}$ for $i,j,l=1,\ldots,3$. Besides, we have $\rho_{k}\in L^{\infty}(\Omega)\cap H^{2}(\Omega)$. Hence, $\partial_{x_{l}}\rho_{k},v_{i}\in H^{1}(\Omega)\hookrightarrow L^{6}(\Omega)$. Hence, we obtain
	\begin{align*}
		&\lVert \rho_{k}(\partial_{x_{l}}v_{i})v_{j}\rVert_{L^{\frac{3}{2}}}\leq \lVert \rho_{k}\rVert_{L^{\infty}}\lVert \partial_{x_{l}}v_{i}\rVert_{L^2}\lVert v_{j}\rVert_{L^{6}},\\
		&\lVert (\partial_{x_{l}}\rho_{k})v_{i}v_{j}\rVert_{L^{\frac{3}{2}}}\leq C\lVert \partial_{x_{l}}\rho_{k} \rVert_{L^{6}}\lVert v_{i} \rVert_{L^{6}}\lVert v_{j} \rVert_{L^{6}},
	\end{align*}
	and thus,
	\begin{align*}
		\lVert \divge{\rho_{k} v\otimes v}\rVert_{L^{\frac{3}{2}}}\leq C_{k}\lVert v\rVert_{H^{1}}^{2}.
	\end{align*}
	Since $\mu\in H^{2}(\Omega)$ and since $\varphi_{k}\in H^{2}(\Omega)$ implies that $\partial_{x_{l}}\varphi_{k}\in H^{1}(\Omega)\hookrightarrow L^{6}(\Omega)$, we also obtain 
	\begin{align*}
		\lVert \mu\nabla\varphi_{k}\rVert_{L^{\frac{3}{2}}}\leq \lVert\nabla\varphi_{k}\rVert_{L^{6}}\lVert\mu\rVert_{L^{2}}=C_{k}\lVert\mu\rVert_{L^{2}}.
	\end{align*}
	Note that $\divge{J}v=\divge{\frac{\rho_{2}-\rho_{1}}{2}\nabla\mu}v$ consists of terms of the form $\frac{\rho_{2}-\rho_{1}}{2}(\partial_{x_{i}}\partial_{x_{i}}\mu) v_{l}$. Moreover, we have $\mu\in H^{2}(\Omega)$ and $v_{l}\in H^{1}(\Omega)\hookrightarrow L^{6}(\Omega)$. Thus, we obtain
	\begin{align*}
		\lVert \divge{J}v\rVert_{L^{\frac{3}{2}}}\leq C\lVert\mu\rVert_{H^{2}}\lVert v\rVert_{L^{6}}\leq C\lVert\mu\rVert_{H^{2}}\lVert v\rVert_{H^{1}}.
	\end{align*}
	Similarly, $J\cdot\nabla v$ has terms of the form $\frac{\rho_{2}-\rho_{1}}{2}\partial_{x_{i}}\mu\partial_{x_{j}}v$. Since $\partial_{x_{i}}\mu\in H^{1}(\Omega)\hookrightarrow L^{6}(\Omega)$ and $v\in H^{1}(\Omega)$, we obtain 
	\begin{align*}
		\lVert J\nabla v\rVert_{L^{\frac{3}{2}}}
		\leq C\lVert \nabla\mu\rVert_{H^{1}}\lVert \nabla v\rVert_{L^{2}}
		\leq C\lVert \mu\rVert_{H^{2}}\lVert \nabla v\rVert_{H^{1}}.
	\end{align*}
	Observe that $\divge{\eta(\varphi_{k})S}$ contains terms of the form that $\eta^{\prime}(\varphi_{k})\partial_{x_{l}}\varphi_{k} S_{ij}$ and $\eta(\varphi_{k})\partial_{x_{l}} S_{ij}$. Since $\partial_{x_{l}}\varphi_{k}\in H^{1}(\Omega)\hookrightarrow L^{6}(\Omega)$, $S\in H^{1}(\Omega)$ and $|\eta(\varphi_{k})|$, $|\eta^{\prime}(\varphi)|$ are bounded by assumption \eqref{assumption on coefficients}, we obtain 
	\begin{align*}
		&\lVert \eta^{\prime}(\varphi_{k})\partial_{x_{l}}\varphi_{k} S_{ij}\rVert_{L^{\frac{3}{2}}}\leq C\lVert \partial_{x_{l}}\varphi_{k}\rVert_{L^{6}}\lVert S_{ij}\rVert_{L^{2}},\\
		&\lVert \eta(\varphi_{k})\partial_{x_{l}} S_{ij}\rVert_{L^{\frac{3}{2}}}\leq C\lVert \partial_{x_{l}} S_{ij}\rVert_{L^{2}},
	\end{align*}
	and therefore also
	\begin{align*}
		&\lVert \divge{\eta(\varphi_{k})S}\rVert_{L^{\frac{3}{2}}}\leq C_{k}\lVert S\rVert_{H^1}.
	\end{align*}	
    This finishes the estimates for the terms in the first line of $\mathscr{F}_{k}$ and we turn to the terms in the second line. Since $S\in H^{1}(\Omega)$, we directly obtain
	\begin{align*}
		\lVert S\rVert_{L^{\frac{3}{2}}}\leq C\lVert S\rVert_{H^{1}}.
	\end{align*}
	Thanks to $v\in H^{1}(\Omega)\hookrightarrow L^{6}(\Omega)$, we find
	\begin{align*}
		\lVert v\cdot\nabla S\rVert_{L^{\frac{3}{2}}}\leq C\lVert v\rVert_{L^{6}}\lVert \nabla S\rVert_{L^{2}}\leq C\lVert v\rVert_{H^{1}}\lVert S\rVert_{H^{1}}.
	\end{align*}
	Moreover, due to $S\in H^{1}(\Omega)\hookrightarrow L^{6}(\Omega)$, we have
	\begin{align*}
		\lVert S\skw{\nabla v}-\skw{\nabla v}S\rVert_{L^{\frac{3}{2}}}\leq C\lVert \nabla v\rVert_{L^{2}}\lVert S\rVert_{L^{6}}\leq C\lVert v\rVert_{H^{1}}\lVert S\rVert_{H^{1}}.
	\end{align*}
	Since $\abs{\eta}$ is bounded by \eqref{assumption on coefficients} and $v\in H^{1}(\Omega)$, we also get
	\begin{align*}
		\lVert \eta(\varphi_{k})\sym{\nabla v}\rVert_{L^{\frac{3}{2}}}\leq C\lVert \sym{\nabla v}\rVert_{L^{2}}\leq C\lVert v\rVert_{H^{1}}.
	\end{align*}
	Now, we discuss the estimates for the terms in the third line of $\mathscr{F}_{k}$. By H\"older inequality, we directly have
	\begin{align*}
		\lVert \varphi \rVert_{W^{1,\frac{3}{2}}}\leq C\lVert\varphi\rVert_{H^{1}}.
	\end{align*}
	Also notice that, since $\nabla\varphi_{k}\in H^{1}(\Omega)$ and $v\in H^{1}(\Omega)\hookrightarrow L^{6}(\Omega)$, we have
	\begin{align*}
		\lVert v\cdot\nabla\varphi_{k}\rVert_{L^{\frac{3}{2}}}
		\leq \lVert \nabla\varphi_{k}\rVert_{L^2}\lVert v\rVert_{L^{6}}
		\leq C_{k}\lVert v\rVert_{H^{1}}.
	\end{align*} 
	Moreover, The derivative of $v\cdot\nabla\varphi_{k}$ consists of terms of the form $\partial_{x_{i}} v_{j}\partial_{x_{l}}\varphi_{k}$ and $v_{j}\partial_{x_{i}}\partial_{x_{l}}\varphi_{k}$. Since $\partial_{x_{i}} v_{j}\in L^{2}(\Omega)$, $\partial_{x_{l}}\varphi_{k}\in H^{1}(\Omega)\hookrightarrow L^{6}(\Omega)$, $v_{j}\in H^{1}(\Omega)\hookrightarrow L^{6}(\Omega)$ and $\partial_{x_{i}}\partial_{x_{l}}\varphi_{k}\in L^{2}(\Omega)$, we arrive at
	\begin{align*}
		&\lVert\partial_{x_{i}} v_{j}\partial_{x_{l}}\varphi_{k}\rVert_{L^{\frac{3}{2}}}
		\leq \lVert \partial_{x_{i}} v_{j}\rVert_{L^{2}}
		\lVert \partial_{x_{l}}\varphi_{k}\rVert_{L^{6}},\\
		&\lVert v_{j}\partial_{x_{i}}\partial_{x_{l}}\varphi_{k}\rVert_{L^{\frac{3}{2}}}
		\leq \lVert v_{j}\rVert_{L^{6}}
		\lVert \partial_{x_{i}}\partial_{x_{l}}\varphi_{k}\rVert_{L^{2}}.
	\end{align*}
	Therefore, we also conclude 
	\begin{align*}
		&\lVert v\cdot\nabla\varphi_{k}\rVert_{W^{1,\frac{3}{2}}}\leq C_{k}\lVert v\rVert_{H^{1}},
	\end{align*} 
    where $C_{k}$ is a positive constant depending on $k$. Since $\mu\in H^{2}(\Omega)$, we obtain
	\begin{align*}
		\lVert \int_{\Omega}\mu \dd{x}\rVert_{W^{1,\frac{3}{2}}}
		\leq C\abs{ \int_{\Omega}\mu \dd{x}}
		\leq C\lVert \mu\rVert_{L^{2}}.
	\end{align*}
	For the last line of $\mathscr{F}_{k}$, the estimate is direct, since
	\begin{align*}
		\lVert \varphi\rVert_{H^{1}}=\lVert \varphi\rVert_{H^{1}}\mbox{ and }\lVert\mu\rVert_{H^{1}}\leq\lVert\mu\rVert_{H^{2}}.
	\end{align*}
	Altogether, from the above discussion, we can see $\mathscr{F}_{k}:\tilde{X}\to\tilde{Y}$ is continuous. Moreover, for $w=(v,S,\varphi,\mu)$ bounded in $\tilde{X}$, also $\mathscr{F}_{k}(w)$ is bounded in $\tilde{Y}$ i.e. $\mathscr{F}_{k}$ maps bounded sets to bounded sets.
    \paragraph{Definition of the operator $\mathscr{K}_{k}:\tilde{Y}\to\tilde{Y}$.}
	Recall~\eqref{weak solution in operator sense}. In order to apply Schaefer's fixed point theorem,see~\cite[][Chapter 9.2.2, Theorem 2]{evansPartialDifferentialEquations2010} for details, we need to introduce a new operator $\mathscr{K}_{k}$ whose image space and preimage space coincide. To this end, using $\tilde{Y}$ from~\eqref{fixed point theorem refined image space}, we define the operator as follows:
	\begin{equation}
		\mathscr{K}_{k}:\tilde{Y}\to\tilde{Y},\,
		u\mapsto\mathscr{F}_{k}\circ\mathscr{L}_{k}^{-1}(u),
		\label{operator K_k}
	\end{equation}
	which is feasible by the invertibility of $\mathscr{L}_{k}$. With the help of this operator, we can rewrite \eqref{weak solution in operator sense} as
	\begin{align}
		u-\mathscr{K}_{k}(u)=0\mbox{ }\Longleftrightarrow u=\mathscr{K}_{k}(u),
	\end{align}
	where $u=\mathscr{L}_{k}(w)$ for $w\in\tilde{X}$. Since we already showed that $\mathscr{L}_{k}^{-1}$ is compact and $\mathscr{F}_{k}$ is continuous, then $\mathscr{K}_{k}$ is also continuous and compact on $\tilde{Y}$. 
    \paragraph{Boundedness of $\mathscr{K}_{k}$ in $\tilde{X}$.} In order to apply Schaefer's fixed point theorem, it remains to show that
	\begin{align}
		\left\{u\in\tilde{Y}:u=\lambda\mathscr{K}_{k}(u)\mbox{ for some }0\leq\lambda\leq1\right\}
		\label{the set which should be bounded in order to apply fixed point theorem}
	\end{align}
	is bounded. To this end, let $u\in\tilde{Y}$ and $0\leq\lambda\leq1$ satisfy $u=\lambda\mathscr{K}_{k}(u)$. Again by the invertibility of $\mathscr{L}_{k}$, we find $w=\mathscr{L}_{k}^{-1}(u)$ satisfying
	\begin{equation*}
		\mathscr{L}_{k}(w)-\lambda\mathscr{F}_{k}(w)=0.
	\end{equation*}
	By the definition of the operator $\mathscr{L}_{k}$ from \eqref{operator L_k} and of $\mathscr{F}_{k}$ from \eqref{operator F_k}, we arrive at \begin{subequations} 
	the following weak formulations
	\begin{equation}
	    \begin{aligned}
		&\int_{\Omega}2\nu(\varphi_{k})\sym{\nabla v}:\sym{\nabla\Phi}\dd{x}
		+\lambda\int_{\Omega}\frac{\rho v-\rho_{k}v_{k}}{h}\cdot\Phi 
		-(\rho_{k}v\otimes v):\nabla\Phi \dd{x} 
		\\
		&+\lambda\int_{\Omega}(\divge{J}+\frac{\rho-\rho_{k}}{h}-v\cdot\nabla\rho_{k})\frac{v}{2}\cdot\Phi 
		+(J\cdot\nabla v)\cdot\Phi 
		+\eta(\varphi_{k})S:\nabla\Phi \dd{x} 
		\\
		&=\lambda\int_{\Omega}(\mu\nabla\varphi_{k})\cdot\Phi \dd{x}
        +\langle f_{k+1}, \Phi\rangle_{H^{1}}, 
		\label{lambda weak velocity}
	\end{aligned}
	\end{equation}
	for all $\Phi\in C_{0,\mathrm{div}}^{\infty}(\Omega)$.
	\begin{equation}
	    \begin{aligned}
		&\int_{\Omega}\gamma\nabla S\threedotsbin\nabla \Psi + \gamma S:\Psi\dd{x}
        +\langle \xi^{k},\Psi\rangle_{H_{\mathrm{sym,Tr}}^{1}}
        \\
		&+\lambda\int_{\Omega}\frac{S-S_{k}}{h}:\Psi 
		+(v\cdot\nabla S):\Psi \
		+(S\skw{\nabla v}-\skw{\nabla v}S):\Psi \dd{x}
		\\
        &=\lambda\int_{\Omega}\eta(\varphi_{k})\sym{\nabla v}:\Psi 
        +\gamma S:\Psi \dd{x},	
		\label{lambda weak stress}
	\end{aligned}
	\end{equation}
	for all $\Psi\in C_{\mathrm{sym,Tr}}^{\infty}(\bar{\Omega})$.
	\begin{equation}
		\lambda\frac{\varphi-\varphi_{k}}{h}+\lambda v\cdot\nabla\varphi_{k}-\lambda\int_{\Omega}\mu \dd{x}
		=\Delta\mu-\int_{\Omega}\mu \dd{x} 
		\label{lambda first CH}
	\end{equation}
	as well as
	\begin{equation}
		\varphi+\mathrm{D} \mathcal{E}_{\mathrm{pf},\kappa}(\varphi)
		=\lambda\varphi+\lambda\mu+\lambda\kappa\frac{\varphi+\varphi_{k}}{2}. 
		\label{lambda second CH}		
	\end{equation}
	\end{subequations}
	Now due to the bounds deduced above and by using a density argument, we conclude that $\Phi=v$ and $\Psi=S$ are admissible test functions for \eqref{lambda weak velocity} and \eqref{lambda weak stress}. Moreover, testing \eqref{lambda first CH} with $\mu$ and \eqref{lambda second CH} with $\frac{1}{h}(\varphi-\varphi_{k})$ and integrating in space gives
	\begin{subequations}
	\begin{equation}
	    \begin{aligned}
		&\int_{\Omega}2\nu(\varphi_{k})\abs{\sym{\nabla v}}^{2} \dd{x}
		+\frac{\lambda}{h}\int_{\Omega}\rho\frac{\abs{v}^{2}}{2}-\rho_{k}\frac{\abs{v_{k}}^{2}}{2} 
		+\rho_{k}\frac{\abs{v-v_{k}}^{2}}{2} \dd{x}
		\\
		&=\lambda\int_{\Omega}\mu(\nabla\varphi_{k}\cdot v) \dd{x}
		-\lambda\int_{\Omega}\eta(\varphi_{k})S:\sym{\nabla v}\dd{x}
        +\langle f_{k+1}, v\rangle_{H^{1}},
		\label{lambda discrete velocity}
	\end{aligned}
	\end{equation}
	and
	\begin{equation}
	    \begin{aligned}
		&\int_{\Omega}\gamma\abs{\nabla S}^2 \dd{x}
        +\langle \xi^{k},S\rangle_{H_{\mathrm{sym,Tr}}^{1}}
        +\int_{\Omega}\gamma \abs{S}^{2}\dd{x}
		+\frac{\lambda}{h}\int_{\Omega}\frac{\abs{S}^{2}}{2}-\frac{\abs{S_{k}}^{2}}{2} 
		+\frac{\abs{S-S_{k}}^{2}}{2} \dd{x}
		\\
		&=\lambda\int_{\Omega}\eta(\varphi_{k})\sym{\nabla v}:S +\gamma \abs{S}^{2}\dd{x},
		\label{lambda discrete stress}
	\end{aligned}
	\end{equation}
	and
	\begin{equation}
		\frac{\lambda}{h}\int_{\Omega}(\varphi-\varphi_{k})\mu \dd{x}
		+\lambda\int_{\Omega}(v\cdot\nabla\varphi)\mu \dd{x}
		-(\lambda-1)\abs{\int_{\Omega}\mu \dd{x} }^{2}
		+\int_{\Omega}\abs{\nabla\mu}^{2} \dd{x} =0,
		\label{lambda discrete first CH}
	\end{equation}
	as well as
	\begin{equation}
	    \begin{aligned}
		&\frac{1}{h}\int_{\Omega}\varphi(\varphi-\varphi_{k}) \dd{x}
		+\frac{1}{h}\int_{\Omega}\nabla\varphi\cdot(\nabla\varphi-\nabla\varphi_{k})\dd{x}
		+\frac{1}{h}\int_{\Omega}W_{\kappa}^{\prime}(\varphi)(\varphi-\varphi_{k})\dd{x}
		\\
		&=\frac{\lambda}{h}\int_{\Omega}\varphi(\varphi-\varphi_{k})-\frac{\lambda}{h}\int_{\Omega}\mu(\varphi-\varphi_{k})\dd{x}
		+\frac{\lambda\kappa}{h}\int_{\Omega}\frac{\varphi^{2}-\varphi_{k}^{2}}{2}\dd{x}.
		\label{lambda discrete second CH}
	\end{aligned}
	\end{equation}
	\end{subequations}
	Summing up \eqref{lambda discrete velocity}-\eqref{lambda discrete second CH}, we obtain that
	\begin{equation*}
    \begin{aligned}
		0
		=&\int_{\Omega}2\nu(\varphi_{k})\abs{\sym{\nabla v}}^{2} \dd{x}
        -\langle f_{k+1}, v\rangle_{H^{1}}
		+\int_{\Omega}\gamma\abs{\nabla S}^{2} \dd{x}
        +\langle \xi^{k},S\rangle_{H_{\mathrm{sym,Tr}}^{1}}
		+\int_{\Omega}\abs{\nabla\mu}^{2} \dd{x}
		\\
		&+\frac{\lambda}{h}\int_{\Omega}\rho\frac{\abs{v}^2}{2}-\rho_{k}\frac{\abs{v_{k}}^2}{2} \dd{x}
		+\frac{\lambda}{h}\int_{\Omega}\rho_{k}\frac{\abs{v-v_{k}}^{2}}{2} \dd{x}
		+\frac{\lambda}{h}\int_{\Omega}\frac{\abs{S}^{2}}{2}-\frac{\abs{S_{k}}^2}{2} \dd{x}
		+\frac{\lambda}{h}\int_{\Omega}\frac{\abs{S-S_{k}}^{2}}{2} \dd{x}
		\\
		&+(1-\lambda)\int_{\Omega}\gamma\abs{S}^{2} \dd{x}
		+(1-\lambda)\abs{\int_{\Omega}\mu \dd{x}}^{2}
		+\frac{1-\lambda}{h}\int_{\Omega}\frac{\varphi^{2}}{2}-\frac{\varphi_{k}^{2}}{2} \dd{x}
		+\frac{1-\lambda}{h}\int_{\Omega}\frac{\abs{\varphi-\varphi_{k}}^{2}}{2} \dd{x}
		\\
		&+\frac{1}{h}\int_{\Omega}\frac{\abs{\nabla\varphi}^{2}}{2}-\frac{\abs{\varphi_{k}}^2}{2} \dd{x}
		+\frac{1}{h}\int_{\Omega}\frac{\abs{\varphi-\varphi_{k}}^{2}}{2} \dd{x}
		+\frac{1}{h}\int_{\Omega}W_{\kappa}^{\prime}(\varphi)(\varphi-\varphi_{k}) \dd{x}
		-\frac{\lambda}{h}\int_{\Omega}\kappa\frac{\varphi^2-\varphi_{k}^{2}}{2} \dd{x}.
	\end{aligned}
    \end{equation*}
	Recall that $\lambda\in[0,1]$ and, by convexity, there holds
	\begin{equation*}
		\int_{\Omega}W_{\kappa}^{\prime}(\varphi)(\varphi-\varphi_{k})\dd{x}
		\geq\int_{\Omega} W_{\kappa}(\varphi)-W_{\kappa}(\varphi_{k}) \dd{x}.
	\end{equation*}
	This leads to the estimate
	\begin{equation*}
	    \begin{aligned}
		&\int_{\Omega}2\nu(\varphi_{k})\abs{\sym{\nabla v}}^{2} \dd{x}
        -\langle f_{k+1}, v\rangle_{H^{1}}
		+\int_{\Omega}\gamma\abs{\nabla S}^{2} \dd{x}
        +\langle \xi^{k},S\rangle_{H_{\mathrm{sym,Tr}}^{1}}
		+\int_{\Omega}\abs{\nabla\mu}^{2} \dd{x}
        \\
		&+\frac{\lambda}{h}\int_{\Omega}\rho\frac{\abs{v}^{2}}{2}-\rho_{k}\frac{\abs{v_{k}}^{2}}{2} \dd{x}
		+\frac{\lambda}{h}\int_{\Omega}\frac{\abs{S}^{2}}{2}-\frac{\abs{S_{k}}^2}{2} \dd{x}
		+(1-\lambda)\int_{\Omega}\gamma\abs{S}^{2} \dd{x}
		+(1-\lambda)\abs{\int_{\Omega}\mu \dd{x}}^{2}
        \\
		&+\frac{1-\lambda}{h}\int_{\Omega}\frac{\abs{\varphi}^{2}}{2}-\frac{\abs{\varphi_{k}}^{2}}{2} \dd{x}
		+\frac{1}{h}\int_{\Omega}\frac{\abs{\nabla\varphi}^{2}}{2}-\frac{\abs{\nabla\varphi_{k}}^{2}}{2} \dd{x}
		+\frac{1}{h}\int_{\Omega}W_{\kappa}(\varphi)-W_{\kappa}(\varphi_{k}) \dd{x}
        \\
		&-\frac{\lambda}{h}\int_{\Omega}\kappa\frac{\varphi^{2}-\varphi_{k}^{2}}{2} \dd{x}
        \leq0.
	\end{aligned}
	\end{equation*}
	Furthermore, rearranging terms results in
	\begin{equation*}
	    \begin{aligned}
		&\int_{\Omega}2\nu(\varphi_{k})\abs{\sym{\nabla v}}^{2} \dd{x}
        -\langle f_{k+1}, v\rangle_{H^{1}}
		+\int_{\Omega}\gamma\abs{\nabla S}^{2} \dd{x}
        +\langle \xi^{k},S\rangle_{H_{\mathrm{sym,Tr}}^{1}}
		+\int_{\Omega}\abs{\nabla\mu}^{2} \dd{x}
        \\
        &+\frac{\lambda}{h}\int_{\Omega}\frac{\abs{S}^{2}}{2}\dd{x}
        +(1-\lambda)\int_{\Omega}\gamma\abs{S}^{2}\dd{x}
		+(1-\lambda)\abs{ \int_{\Omega}\mu \dd{x}}^{2}
		+\frac{1}{h}\int_{\Omega}\frac{\abs{\nabla\varphi}^{2}}{2} \dd{x}
		+\frac{1}{h}\int_{\Omega}W(\varphi) \dd{x}
		\\
		&\leq\frac{\lambda}{h}\int_{\Omega}\rho_{k}\frac{\abs{v_{k}}^{2}}{2} \dd{x}
		+\frac{\lambda}{h}\int_{\Omega}\frac{\abs{S_{k}}^{2}}{2} \dd{x}
		+\frac{1-\lambda}{h}\int_{\Omega}\frac{\abs{\varphi_{k}}^{2}}{2} \dd{x}
		+\frac{1}{h}\int_{\Omega}\frac{\abs{\nabla\varphi_{k}}}{2} \dd{x}
		+\frac{1}{h}\int_{\Omega}W_{\kappa}(\varphi_{k}) \dd{x}
        .
	\end{aligned}
	\end{equation*}
	This is equivalent to
    \begin{equation*}
        \begin{aligned}
		&h\int_{\Omega}2\nu(\varphi_{k})\abs{\sym{\nabla v}}^{2} \dd{x}
		+h\int_{\Omega}\gamma\abs{\nabla S}^{2} \dd{x}
        +h\langle \xi^{k},S\rangle_{H_{\mathrm{sym,Tr}}^{1}}
		+h\int_{\Omega}\abs{\nabla\mu}^{2} \dd{x}
		+\int_{\Omega}\frac{\abs{\nabla\varphi}^{2}}{2} \dd{x}
		\\
		&+\lambda\int_{\Omega}\frac{\abs{S}^{2}}{2}\dd{x}
        +(1-\lambda)h\int_{\Omega}\gamma\abs{S}^{2}\dd{x}
        +h(1-\lambda)\abs{ \int_{\Omega}\mu \dd{x}}^{2}
		+\int_{\Omega}W(\varphi)\dd{x}
        \\
        &\leq C_{k}+h\langle f_{k+1}, v\rangle_{H^{1}},
	\end{aligned}
    \end{equation*}
	where $C_{k}$ is a positive constant depending on $k$. Notice that by the invertibility of $\mathscr{L}_{k}$, $w=(v,S,\varphi,\mu)=\mathscr{L}_{k}^{-1}(u)\in X$ so that $\varphi\in\dom{\mathrm{D}\mathcal{E}_{\mathrm{pf},\kappa}}$. Hence $\varphi\in[-1,1]$. Moreover, due to the continuity of $W$ on $[-1,1]$, $W(\varphi)$ is bounded both from above and from below. Therefore, it can be absorbed by the constant $C_{k}$ on the right-hand side, i.e.,
	\begin{equation*}
	    \begin{aligned}
		&h\int_{\Omega}2\nu(\varphi_{k})\abs{\sym{\nabla v}}^{2} \dd{x}
		+h\int_{\Omega}\gamma\abs{\nabla S}^{2} \dd{x}
        +h\langle\xi^{k},S\rangle_{H_{\mathrm{sym,Tr}}^{1}}
		+h\int_{\Omega}\abs{\nabla\mu}^{2} \dd{x}
		+\int_{\Omega}\frac{\abs{\nabla\varphi}^{2}}{2} \dd{x}
		\\
		&+\lambda\int_{\Omega}\frac{\abs{S}^{2}}{2}\dd{x}
        +h(1-\lambda)\int_{\Omega}\gamma\abs{S}^{2}\dd{x}
        +h(1-\lambda)\abs{ \int_{\Omega}\mu \dd{x}}^{2}
        \leq C_{k}+\langle f_{k+1}, v\rangle_{H^{1}}.
	\end{aligned}
	\end{equation*}
    Notice that for $\lambda\in[0,\frac{1}{2}]$, the term $h(1-\lambda)\int_{\Omega}\gamma\abs{S}^{2}\dd{x}$ gives an estimate on $\int_{\Omega}\abs{S}^{2}\dd{x}$. Similarly, for $\lambda\in(\frac{1}{2},1]$, the term $\lambda\int_{\Omega}\frac{\abs{S}^{2}}{2}\dd{x}$ also gives an estimate on $\int_{\Omega}\abs{S}^{2}\dd{x}$. Moreover, since $\gamma>0$ and $\nu$ is bounded from below by a positive constant by \eqref{assumption on coefficients}, we finally arrive at
	\begin{equation*}
	    \begin{aligned}
		&\int_{\Omega}\abs{\nabla v}^{2} \dd{x}
		+\int_{\Omega}\abs{\nabla S}^{2} \dd{x}
        +\int_{\Omega}\abs{S}^{2} \dd{x}
        +\langle\xi^{k},S\rangle_{H_{\mathrm{sym,Tr}}^{1}}
		+\int_{\Omega}\abs{\nabla\mu}^{2} \dd{x}
		\\
        &+\int_{\Omega}\abs{\nabla\varphi}^{2} \dd{x}
        +(1-\lambda)\abs{\int_{\Omega}\mu \dd{x}}^{2}
		\leq C_{k}
        +\langle f_{k+1}, v\rangle_{H^{1}}.
	\end{aligned}
	\end{equation*}
	Combined with $\varphi\in[-1,1]$ and Poincar\'e's inequality, we obtain 
	\begin{equation}
		\lVert v\rVert_{H^1}
		+\lVert S\rVert_{H^1}
        +\widetilde{\mathcal{P}}(\varphi_{k};S)
		+\lVert\varphi\rVert_{H^{1}}
		+\lVert\nabla\mu\rVert_{L^2}
		+\sqrt{1-\lambda}\abs{ \int_{\Omega}\mu \dd{x}}\leq C_{k}. 
		\label{estimate lambda problem}
	\end{equation}
	It remains to find an $H^{2}-$estimate for $\mu$. By \eqref{estimate coming from regularity}, this is equivalent to finding an $H^{1}-$estimate. Since we already have an estimate on $\lVert\nabla\mu\rVert_{L^{2}}$, it remains to find an $L^{2}-$estimate for $\mu$. Again by Poincar\'e's inequality, we have
	\begin{equation*}
		\lVert \mu\rVert_{L^2}\leq \lVert\nabla\mu\rVert_{L^2}+\abs{\int_{\Omega}\mu \dd{x}}.
	\end{equation*}
	Hence, it is sufficient to find a bound for $\abs{\int_{\Omega}\mu \dd{x}}$. To this end, first consider $\lambda\in[0,\frac{1}{2})$. Then we directly have
	\begin{equation*}
		\frac{\sqrt{2}}{2}\abs{\int_{\Omega}\mu \dd{x}}
		\leq\sqrt{1-\lambda}\abs{\int_{\Omega}\mu \dd{x}}
		\leq C_{k}.
	\end{equation*}
	For $\lambda\in[\frac{1}{2},1]$, $\frac{1}{2}\abs{\int_{\Omega}\mu \dd{x}}\leq \lambda\abs{\int_{\Omega}\mu \dd{x}}$. Repeating the argument as in proving \eqref{estimate on the discrete version of the derivative of double well potential and mean value of mu} to equation \eqref{lambda discrete second CH} and notice $\lVert \nabla\mu\rVert_{L^2}$ and $\lVert \nabla\varphi\rVert_{L^2}$ are bounded by $C_{k}$ due to \eqref{estimate lambda problem}, we can get
	\begin{equation*}
		\abs{ \int_{\Omega}\mu \dd{x}}\leq C_{k}.
	\end{equation*}
	Therefore, \eqref{estimate lambda problem} can be improved to
	\begin{equation}
		\lVert v\rVert_{H^1}
        +\lVert S\rVert_{H^1}
        +\widetilde{\mathcal{P}}(\varphi_{k};S)
        +\lVert\varphi\rVert_{H^{1}}
        +\lVert \mu\rVert_{H^2}
        \leq C_{k}. 
	\end{equation}
	Moreover, from \eqref{lambda second CH}, we have the additional estimate
	\begin{equation*}
		\lVert \mathrm{D} \mathcal{E}_{\mathrm{pf},\kappa}(\varphi)\rVert_{L^2}\leq (\lambda+1)\lVert\varphi\rVert_{L^2}+\lambda\lVert\mu\rVert_{L^2}+\frac{\lambda\kappa}{2}\left(\lVert\varphi\rVert_{L^2}+\lVert\varphi_{k}\rVert_{L^2}\right)\leq C_{k}.
	\end{equation*}
	Altogether, we have the following estimate for $w=(v,S,\varphi,\mu)$
	\begin{align*}
		\lVert w\rVert_{\tilde{X}}+\lVert\mathrm{D} \mathcal{E}_{\mathrm{pf},\kappa}(\varphi)\rVert_{L^{2}}\leq C_{k},
	\end{align*}
    with $\tilde{X}$ from~\eqref{fixed point theorem refined preimage space}.
    \paragraph{Boundedness of $\mathscr{K}_{k}$ in $\tilde{Y}$.}
	It remains to show that $u=\mathscr{L}_{k}(w)$ is bounded in $\tilde{Y}$. Recall that $\mathscr{F}_{k}$ maps bounded sets in $\tilde{X}$ to bounded sets in $\tilde{Y}$ and that $u=\lambda\mathscr{F}_{k}(w)$. Therefore,
	\begin{align*}
		\lVert u\rVert_{\tilde{Y}}=\lVert \lambda\mathscr{F}_{k}(w)\rVert_{\tilde{Y}}\leq C_{k}\left(\lVert w\rVert_{\tilde{X}}+1\right)\leq\tilde{C}_{k}.
	\end{align*}
	where $\tilde{C}_{k}$ is a positive constant depending on $k$. 
    
    Now, since all assumptions in Schaefer's fixed point theorem have been satisfied, the existence of a fixed point of the operator $\mathscr{K}_{k}$ can be guaranteed. Hence, there exists a weak solution to \eqref{discrete velocity}-\eqref{discrete second CH}.
\end{proof}

\subsection{Existence of weak solutions}
In Section \ref{Implicit time discretization}, we have constructed a sequence of approximating solutions $(v_{h},S_{h},\varphi_{h},\mu_{h})_{h}$. In order to get a time-continuous solution, we perform the limit passage $h\to0$. We follow the ideas of \cite[][Theorem 3.4]{abelsExistenceWeakSolutions2013} where the existence of weak solutions for the Cahn--Hilliard--Navier--Stokes system was shown. In our case here, we have to deal with the extra stress tensor which will need some more attention. 
\begin{theorem}\label{existence of generalized solution for TPP}
	Let $\gamma>0$. Let $v_{0},S_{0}$ and $\varphi_{0}$ satisfy Assumption~\ref{assumptions on the initial data}. Let Assumption~\ref{TP assumption} be satisfied and let $\mathcal{P}$ fulfill Assumption~\ref{assumption on the dissipation potential}. Then, there exists a weak solution $(v,S,\varphi,\mu)$ of \eqref{two phase system alternative} in the sense of Definition \ref{gerneralized solution TPP}. Moreover, the phase-field variable $\varphi$ takes values in $(-1,1)$ a.e. in $\Omega\times(0,T)$.
\end{theorem}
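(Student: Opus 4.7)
The plan is to carry out the limit passage $h\to 0$ in the time-discrete scheme constructed in Lemma~\ref{Existence of solution to the time discrete problem}. For each $h=T/N$ I would define, from the discrete iterates $(v_k,S_k,\varphi_k,\mu_k,\xi_k)$, the piecewise constant interpolants $\bar v_h(t):=v_{k+1}$ for $t\in(t_k,t_{k+1}]$ (and analogously for $\bar S_h,\bar\varphi_h,\bar\mu_h,\bar\xi_h$), together with the piecewise affine interpolants $\hat v_h,\hat S_h,\hat \varphi_h$ used to encode the time derivatives, and a shifted version $\bar\varphi_h^-(t):=\varphi_k$ corresponding to the explicit occurrences of $\varphi_k$ in~\eqref{discrete velocity}--\eqref{discrete second CH}. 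Iterating the discrete energy--dissipation estimate~\eqref{discrete total energy estimate} and using the Young-type absorption of $\langle f_{k+1},v_{k+1}\rangle_{H^1}$ together with the coercivity of $\nu$, $\gamma$, and $W_\kappa\ge 0$, produces uniform bounds
\begin{equation*}
\|\bar v_h\|_{L^\infty_tL^2_x\cap L^2_tH^1_x}+\|\bar S_h\|_{L^\infty_tL^2_x\cap L^2_tH^1_x}+\|\bar\varphi_h\|_{L^\infty_tH^1_x}+\|\nabla\bar\mu_h\|_{L^2_tL^2_x}+\int_0^T\mathcal{P}(\bar\varphi_h^-;\bar S_h)\dd t \le C.
\end{equation*}
An application of Lemma~\ref{estimate from discrete second CH} upgrades this to $\bar\mu_h\in L^2(0,T;H^1(\Omega))$ and $\bar\varphi_h\in L^2(0,T;H^2(\Omega))$ with $W'(\bar\varphi_h)\in L^2(0,T;L^2(\Omega))$; in particular $|\bar\varphi_h|<1$ a.e.

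In the second step I would extract time-derivative bounds from the equations themselves. Testing~\eqref{discrete velocity} with divergence-free test functions, \eqref{discrete stress} with $C^\infty_{\mathrm{sym,Tr}}$ test functions, and using~\eqref{discrete first CH} directly yields discrete dual bounds for $\partial_t(\rho v)_h$, $\partial_t S_h$ and $\partial_t\varphi_h$ in, respectively, $L^{4/3}(0,T;(H^1_{0,\mathrm{div}})')$, $L^{4/3}(0,T;(H^1_{\mathrm{sym,Tr}})')$ and $L^2(0,T;H^{-1})$, where the monotone-selection term $\bar\xi_h$ is controlled via $\langle\bar\xi_h,\bar S_h\rangle$ that is already bounded in $L^1_t$. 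A standard Aubin--Lions--Simon argument then gives strong convergence of $\bar\varphi_h\to\varphi$ in $L^2(0,T;H^1(\Omega))$ (hence pointwise a.e., so that $\rho(\bar\varphi_h)\to\rho(\varphi)$, $\nu(\bar\varphi_h)\to\nu(\varphi)$, $\eta(\bar\varphi_h)\to\eta(\varphi)$ strongly in any $L^p$) and of $\bar v_h\to v$ in $L^2(0,T;L^q(\Omega))$ for some $q>2$, which is enough to pass to the limit in the convective and Korteweg terms. Crucially, because $\gamma>0$, the bound on $\bar S_h$ in $L^2_tH^1_x$ combined with the time-derivative bound yields strong convergence $\bar S_h\to S$ in $L^2(0,T;L^2(\Omega))$; this is what allows the nonlinear Zaremba--Jaumann terms $\bar S_h\mathrm{skw}(\nabla\bar v_h)-\mathrm{skw}(\nabla\bar v_h)\bar S_h$ and $\bar v_h\cdot\nabla\bar S_h$ to pass to the limit. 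Consistency of the difference quotient with $\partial_t$ in the limit is standard.

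Once these convergences are in place, passing to the limit in the weak formulation~\eqref{equivalent discrete velocity} produces~\eqref{weak velocity TPP}, and passing to the limit in~\eqref{discrete first CH}--\eqref{discrete second CH} produces~\eqref{weak first CH TPP} and~\eqref{weak second CH TPP}; the pointwise validity of~\eqref{weak second CH TPP} with $W'(\varphi)\in L^2_tL^2_x$ follows from the uniform bound on $W'(\bar\varphi_h)$ together with lower semicontinuity arguments that also give $|\varphi|<1$ a.e. For the stress equation, rather than trying to identify $\xi$ as an element of $\partial\mathcal{P}(\varphi;S)$ directly, I would rewrite the discrete stress equation in the variational-inequality form by testing with $S_{k+1}-\tilde S(t_{k+1})$ for admissible $\tilde S$, exploiting the definition of the subdifferential to replace $\langle\xi_{k+1}^k,S_{k+1}-\tilde S(t_{k+1})\rangle$ by $\mathcal{P}(\varphi_k;S_{k+1})-\mathcal{P}(\varphi_k;\tilde S(t_{k+1}))$, and then passing to the limit using lower semicontinuity of $\mathcal{P}(\cdot;\cdot)$ in the first argument (continuity) and in the second (convexity and lower semicontinuity from Assumption~\ref{assumption on the dissipation potential}); this directly yields~\eqref{generalized solution stress energy estimate TPP}. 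The partial energy inequalities~\eqref{partial energy inequality kinetic} and~\eqref{partial energy inequality phase field} follow from the corresponding discrete estimates obtained by testing~\eqref{equivalent discrete velocity} with $v_{k+1}$ and~\eqref{discrete first CH} with $\mu_{k+1}$, combined with weak lower semicontinuity of the kinetic and phase-field energies and the dissipation functionals, using Lemma~\ref{result related to energy estimate} to restate the telescopic estimate as an integral one.

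The main obstacle I expect is the handling of the multi-valued, nonsmooth, $\varphi$-dependent term $\partial\mathcal{P}(\varphi;S)$ together with the Zaremba--Jaumann rotation terms: one does not obtain enough control on $\bar\xi_h$ to pass to its weak limit and identify it with an element of $\partial\mathcal{P}(\varphi;S)$; this is precisely why the solution concept is stated as an evolutionary variational inequality~\eqref{generalized solution stress energy estimate TPP}, for which the continuity of $y\mapsto P(x,y,z)$ in Assumption~\ref{assumption on the dissipation potential}, the strong convergence of $\bar\varphi_h\to\varphi$, and the lower semicontinuity in $z$ are exactly the ingredients that make the limit passage work. All remaining technicalities (consistency of the $L^2$ projection of $f$ via $f_{k+1}$, treatment of the initial condition through the piecewise affine interpolants, and the bootstrap $\bar\varphi_h\in L^2_tH^2_x$) are routine in this setting.
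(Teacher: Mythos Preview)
Your overall architecture matches the paper's: time-discrete scheme, uniform energy bounds, Aubin--Lions compactness for $v$ and $\varphi$, limit passage in the weak formulations, and the variational-inequality treatment of the stress equation. However, there is a genuine gap in your compactness argument for $S$. You claim a bound on $\partial_t \hat S_h$ in $L^{4/3}(0,T;(H^1_{\mathrm{sym,Tr}})')$ and say the subdifferential term ``is controlled via $\langle\bar\xi_h,\bar S_h\rangle$ that is already bounded in $L^1_t$.'' But an $L^1_t$ bound on the pairing $\langle\bar\xi_h,\bar S_h\rangle$ does \emph{not} yield any dual-space bound on $\bar\xi_h$; for a plastic potential with a yield-stress constraint (as in the paper's motivating example), elements of $\partial\widetilde{\mathcal P}(\varphi_k;S_{k+1})$ can be arbitrarily large in $(H^1)'$. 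Since $\bar\xi_h$ appears additively in the expression for the discrete time derivative of $S$, you cannot extract a time-derivative bound, and the Aubin--Lions argument for strong convergence of $\bar S_h$ collapses.

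The paper never obtains strong convergence of $S^N$. It gets by with only the weak convergence $S^N\rightharpoonup S$ in $L^2(0,T';H^1)$ (available precisely because $\gamma>0$) together with the strong convergence $v^N\to v$ in $L^2(0,T';L^2)$. The Zaremba--Jaumann cross terms $(v^N\cdot\nabla S^N):\tilde S$ and $(S^N\skw{\nabla v^N}):\tilde S$ in the evolutionary variational inequality are handled by integrating by parts so that the spatial derivative sits on $S^N$ or on the smooth test function $\tilde S$, leaving a weak--strong product that converges. The same trick disposes of $\eta(\varphi_h^N)S^N:\nabla v^N$ in the partial kinetic-energy inequality. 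A minor additional point you omit: since Lemma~\ref{Existence of solution to the time discrete problem} requires $\varphi_k\in H^2$, the paper first regularizes the initial datum $\varphi_0\in H^1$ by a short-time heat flow to produce $\varphi_0^N\in H^2$ with $|\varphi_0^N|\le 1$ and $\varphi_0^N\to\varphi_0$ in $H^1$.
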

\begin{proof} 
    \textbf{Step 1: Existence of discrete solutions at each time step $k=0,1\ldots,N$.}
	We would like to apply Lemma \ref{Existence of solution to the time discrete problem} in order to conclude the existence of a weak solution $(v_{k},S_{k},\varphi_{k},\mu_{k})_{k}$ at each time step. For this, notice that the initial datum $\varphi_{0}$ is required to be in $H^{2}(\Omega)$. Therefore, we have to approximate $\varphi_{0}$ by functions in $H^{2}(\Omega)$. To this end, consider the following parabolic problem
	\begin{equation*}
	    \left\{
        \begin{aligned}
             \partial_{t}u-\Delta u&=0\,&& \mbox{in}\,\Omega\times(0,T),\\
			u(0)&=\varphi_{0}\,&&\mbox{in}\,\Omega
            ,\\
			\vec{n}\cdot\nabla u&=0\,&&\mbox{on}\,\partial\Omega\times(0,T). 
        \end{aligned}\right.
	\end{equation*}
	By the regularity theorem of parabolic equations e.g.~\cite[][Chapter 7.1.3]{evansPartialDifferentialEquations2010}, there exists a solution $u\in L^{2}(0,T;H^{2}(\Omega))$, hence we can set $\varphi_{0}^{N}:=u|_{t=\frac{1}{N}}$. Moreover, the maximum principle gives that  $\abs{\varphi_{0}^{N}}\leq1$ and we also have that 
	\begin{equation}
		\varphi_{0}^{N}\to\varphi_{0}\mbox{ in }H^{1}(\Omega)
		\label{convergence of approximating initial datum of phase-field}
	\end{equation}
	(For details, see \cite[][Chapter 7.1]{evansPartialDifferentialEquations2010}). 
	Set the time-step size to be $h=\frac{T}{N}$ ($h=\frac{1}{N}$ when $T=\infty$) and $t_{k}=kh$ for all $k=0,1,\ldots,N$. Now, we can apply Lemma \ref{Existence of solution to the time discrete problem} with initial data $(v_{0},S_{0},\varphi_{0}^{N})$ to deduce the existence of approximating solutions $(v_{k},S_{k},\varphi_{k},\mu_{k})$ where $k=1,\ldots,N$.
    \paragraph{Step 2: Interpolation in time and weak formulation for the interpolations.}
	Define $F^{N}(t)$ on $[0,T)$ to be the piecewise constant interpolation i.e. 
	\begin{subequations}
	\begin{equation}
		F^{N}(t):=F_{k}\mbox{ for }t\in((k-1)h,kh]\mbox{ and }F(0)=F_{0},
		\label{piecewise constant interpolation}
	\end{equation}
	where $k\in\mathbb{N}$ and $F\in\{v,S,\varphi,\mu,\xi\}$. Also set 
	\begin{equation}
		\rho^{N}:=\frac{1}{2}(\rho_{2}+\rho_{1})+\frac{1}{2}(\rho_{2}-\rho_{1})\varphi^{N}.
		\label{piecewise constnat interpolation of density}
    \end{equation}
	Moreover, we define
	\begin{align}
		\partial_{t,h}^{+}F^{N}(t)&:=\frac{1}{h}(F^{N}(t+h)-F^{N}(t)),
        \\
		\partial_{t,h}^{-}F^{N}(t)&:=\frac{1}{h}(F^{N}(t)-F^{N}(t-h)),
        \\
		F_{h}^{N}(t)&:=F^{N}(t-h). 
		\label{one step back of interpolation}
	\end{align}
	\end{subequations}
	Notice that the approximating problem should be tested by static test functions, but, in the limit, we aim for a weak formulation which also involves time. 
	\begin{subequations}
	For this purpose, for $\Phi\in C_{0,\mathrm{div}}^{\infty}(\Omega\times[0,T))$, we define the interpolations $\tilde{\Phi}^{k+1}=\int_{kh}^{(k+1)h}\Phi \dd{t}$ and use $\tilde{\Phi}^{k+1}$ as a the test function in \eqref{discrete velocity}. Sum over $k\in\{0,1,\ldots,N\}$, we obtain
	\begin{equation}
	    \begin{aligned}
		&\int_{0}^{T}\int_{\Omega}\partial_{t,h}^{-}(\rho^{N}v^{N})\cdot\Phi \dd{x}\dd{t}
		-\int_{0}^{T}\int_{\Omega}(\rho_{h}^{N}v^{N}\otimes v^{N}):\nabla\Phi \dd{x}\dd{t} 
		\\
		+&\int_{0}^{T}\int_{\Omega}2\nu(\varphi_{h}^{N})\sym{\nabla v^N}:\sym{\nabla\Phi} \dd{x}\dd{t}
		+\int_{0}^{T}\int_{\Omega}\eta(\varphi_{h}^{N})S^{N}:\sym{\nabla\Phi} \dd{x}\dd{t} 
		\\
		-&\int_{0}^{T}\int_{\Omega}(v^{N}\otimes J^{N}):\nabla\Phi \dd{x}\dd{t}
		=\int_{0}^{T}\int_{\Omega} \mu^{N}\nabla\varphi_{h}^{N}\cdot\Phi \dd{x}\dd{t}
        +\int_{0}^{T}\langle f^{N}, \Phi\rangle_{H^{1}}\dd{t},	
		\label{weak formulation piecewise constant velocity}
	\end{aligned}
	\end{equation}
	for all $\Phi\in C_{0,\mathrm{div}}^{\infty}(\Omega\times[0,T))$. Analogously, for all $\zeta\in C_{0}^{\infty}([0,T);C^{1}(\bar{\Omega}))$, we approximate $\zeta$ in the same way as above to get $\tilde{\zeta}^{k+1}$. Hence, testing \eqref{discrete first CH} with $\tilde{\zeta}^{k+1}$ and summing over $k\in\{0,1,\ldots,N\}$ gives
	\begin{equation}
		\int_{0}^{T}\int_{\Omega}\partial_{t,h}^{-}(\varphi^{N})\zeta \dd{x}\dd{t}
		+\int_{0}^{T}\int_{\Omega}v^{N}\varphi_{h}^{N}\cdot\nabla\zeta \dd{x}\dd{t}
		=\int_{0}^{T}\int_{\Omega}\nabla\mu^{N}\cdot\nabla\zeta \dd{x}\dd{t},
		\label{weak formulation piecewise constant first CH}
	\end{equation}
	and from \eqref{discrete second CH}, we have
	\begin{equation}
		\mu^{N}+\frac{\kappa}{2}(\varphi^{N}+\varphi_{h}^{N})=-\Delta\varphi^{N}+W_{0}^{\prime}(\varphi^{N}),
		\label{weak formulation piecewise constant second CH}
	\end{equation}
	which holds almost everywhere in $\Omega\times(0,T)$.
	\end{subequations}
    \paragraph{Step 3: Energy-dissipation estimate and uniform a priori bound.}
	For all $N\in\mathbb{N}$, for all $t\in(t_{k},t_{k+1})$ and for all $k\in\{0,1,\ldots,N\}$, we define $E^{N}(t)$ to be the piecewise linear interpolation of total energy $\mathcal{E}_{\mathrm{tot}}(v_{k},S_{k},\varphi_{k})$ i.e.
	\begin{equation}
		E^{N}(t):=\frac{(k+1)h-t}{h}\mathcal{E}_{\mathrm{tot}}(v_{k},S_{k},\varphi_{k})+\frac{t-kh}{h}\mathcal{E}_{\mathrm{tot}}(v_{k+1},S_{k+1},\varphi_{k+1}),
		\label{piecewise constant interpolation total energy}
	\end{equation}
	and define $D^{N}(t)$ to be the piecewise constant dissipation for all $t\in(t_{k},t_{k+1})$, i.e.,
	\begin{equation}
		D^{N}(t):=\int_{\Omega}
		2\nu(\varphi_{h}^{N})\abs{\sym{\nabla v^{N}}}^{2}
		+\gamma\abs{\nabla S^{N}}^{2}
		+\abs{\nabla\mu^{N}}^{2}\dd{x}
        -\langle f^{N}, v^{N}\rangle_{H^{1}}
        +\mathcal{P}(\varphi_{h}^{N};S^{N}).
		\label{piecewise constant interpolation dissipation}
	\end{equation}
	From \eqref{discrete total energy estimate}, we can directly see for all $t\in(t_{k},t_{k+1})$ that 
	\begin{equation}
		-\dv{t}E^{N}(t)=\frac{E_{\mathrm{tot}}(v_{k},S_{k},\varphi_{k})-E_{\mathrm{tot}}(v_{k+1},S_{k+1},\varphi_{k+1})}{h}\geq D^{N}(t).
		\label{piecewise linear energy dissipation time derivative estimate}
	\end{equation}
	Integrating \eqref{piecewise linear energy dissipation time derivative estimate} in each time interval $(t_{k},t_{k+1})$ and summing over $k\in\{0,1,\ldots,N\}$ yields that
	\begin{equation}
	    \begin{aligned}
		\mathcal{E}_{\mathrm{tot}}(v_{0},S_{0},\varphi_{0}^{N})
		\geq&\int_{0}^{t}\int_{\Omega}2\nu(\varphi_{h}^{N})\abs{\sym{\nabla v^{N}}}^{2}
		+\gamma\abs{\nabla S^{N}}^{2}
		+\abs{\nabla\mu^{N}}^{2}\dd{x} 
        -\langle f^{N}, v^{N}\rangle_{H^{1}}\dd{\tau}
		\\
		&+\int_{0}^{t}\mathcal{P}(\varphi_{h}^{N};S^{N})\dd{\tau}+E^{N}(t),
		\label{piecewise linear energy dissipation estimate}
	\end{aligned}
	\end{equation}
    for all $t\in[0,T)$. Since $\mathcal{E}_{\mathrm{tot}}(v_{0},S_{0},\varphi_{0}^{N})$ is finite and by Assumption~\ref{assumptions on the initial data} that $f\in L_{\mathrm{loc}}^{2}([0,T);H^{-1}(\Omega)^{3})$, and using~\eqref{assumption on coefficients}, we deduce the following uniform bounds with respect to $N\in\mathbb{N}$: 
	\begin{subequations}
	\begin{align}
		&(v^{N})_{N}\mbox{ is bounded in }L^{2}(0,T^{\prime};H_{0,\mathrm{div}}^{1}(\Omega))\mbox{ and in }L^{\infty}(0,T^{\prime};L_{\mathrm{div}}^{2}(\Omega)),
		\label{bounds of discrete velocity}\\
		&(S^{N})_{N}\mbox{ is bounded in }L^{2}(0,T^{\prime};H_{\mathrm{sym,Tr}}^{1}(\Omega))\mbox{ and in }L^{\infty}(0,T^{\prime};L_{\mathrm{sym,Tr}}^{2}(\Omega)),
		\label{bounds of discrete stress}\\
		&(\varphi^{N})_{N}\mbox{ is bounded in }L^{\infty}(0,T^{\prime};H^{1}(\Omega)),
		\label{bounds of discrete phase-field}\\
		&(\nabla\mu^{N})_{N}\mbox{ is bounded in }L^{2}(0,T^{\prime};(L^{2}(\Omega))^3).
		\label{bounds of discrete chemical potenial}
	\end{align}
	\end{subequations}
	for all $0<T^{\prime}<T$. Moreover, from the uniform bounds~\eqref{bounds of discrete phase-field} on $(\nabla\varphi^{N})_{N}$ and~\eqref{bounds of discrete chemical potenial} on $(\nabla\mu^{N})_{N}$, we can deduce the following estimate on $(\mu^{N})_{N}$ with the aid of~\eqref{estimate on the discrete version of the derivative of double well potential and mean value of mu} 
	\begin{align}
		&\int_{0}^{T^{\prime}}\abs{\int_{\Omega}\mu^{N} \dd{x}}\dd{t}\leq C\cdot T^{\prime}\mbox{ for all }0<T^{\prime}<T,
		\label{bound of mean value of discrete chemical potential}
	\end{align}
	where $C>0$ is a constant.
	\paragraph{Step 4: Immediate convergence results.}
	By a classical diagonalization argument, we can extract a not relabeled subsequence and a limit quadruplet $(v,S,\varphi,\mu)$ such that for all $0<T^{\prime}<T$:
	\begin{subequations}\label{immediate convergence results for piecewise linear interpolations}
	\begin{align}
		v^{N}\rightharpoonup v&\mbox{ in }L^{2}(0,T^{\prime};H^{1}(\Omega)),
		\label{weak convergence of piecewise linear interpolation velocity space derivative}\\
		v^{N}\overset{*}{\rightharpoonup}v&\mbox{ in }L^{\infty}(0,T^{\prime};L_{\mathrm{div}}^{2}(\Omega)),
		\label{weak convergence of piecewise linear interpolation velocity}\\
		S^{N}\rightharpoonup S&\mbox{ in }L^{2}(0,T^{\prime};H^{1}(\Omega)),
		\label{weak convergence of piecewise linear interpolation stress space derivative}\\
		S^{N}\overset{*}{\rightharpoonup}S&\mbox{ in }L^{\infty}(0,T^{\prime};L_{\mathrm{sym,Tr}}^{2}(\Omega)),
		\label{weak convergence of piecewise linear interpolation stress}\\
		\varphi^{N}\overset{*}{\rightharpoonup}\varphi&\mbox{ in }L^{\infty}(0,T^{\prime};H^{1}(\Omega)),
		\label{weak convergence of piecewise linear interpolation phase-field}\\
		\mu^{N}\rightharpoonup\mu&\mbox{ in }L^{2}(0,T^{\prime};H^{1}(\Omega)).
		\label{weak convergence of piecewise linear interpolation chemical potential}
	\end{align}
	\end{subequations}
	\paragraph{Step 5: Improved convergence results.} 
	Now we derive additional strong convergence results with the help of the Aubin-Lions lemma, see \cite[][Chapter 7.3]{roubicekNonlinearPartialDifferential2013} for details. This will require estimates on the time derivatives. To this end, we define the piecewise linear interpolations, i.e.,  
	\begin{equation}
		\tilde{F}^{N}(t):=\frac{(k+1)h-t}{h}F_{k}+\frac{t-kh}{h}F_{k+1}
		\label{piecewise linear interpolation}
	\end{equation}
	for $t\in[t_{k},t_{k+1}]$ and $F\in\{\rho v,\varphi\}$; if $F=\rho v$, then $\tilde{F}^{N}:=\widetilde{\rho v}^{N}$ with the piecewise linear interpolation of $\rho v$ and $F_{k}=\rho_{k}v_{k}$ with the solutions $\rho_{k},v_{k}$ at step $k$. By definition, $\partial_{t}\tilde{F}^{N}=\partial_{t,h}^{-}F^{N}$ and the bounds on the piecewise constant interpolations can be transferred to the piecewise linear interpolations, i.e., we have
	\begin{subequations}
		\begin{align}
			&(\widetilde{\rho v}^{N})_{N}\mbox{ is bounded in }L^{2}(0,T;W_{0}^{1,\frac{3}{2}}(\Omega))\mbox{ and in }L^{\infty}(0,T;L^{2}(\Omega)),
			\label{bounds of piecewise linear interpolation momentum}\\
			&(\tilde{\varphi}^{N})_{N}\mbox{ is bounded in }L^{\infty}(0,T;H^{1}(\Omega)).
			\label{bounds of piecewise linear interpolation phase-field}
		\end{align}
	\end{subequations} Moreover, we can estimate the difference between the piecewise constant interpolation and the piecewise linear interpolation pointwise, i.e., we have
	\begin{equation}
		\abs{\tilde{F}^{N}(t,x)-F^{N}(t,x)}\leq h\abs{\partial_{t}\tilde{F}^{N}(t,x)}\mbox{ almost everywhere in }\Omega\times(0,T).
		\label{difference between piecewise constant interpolation and piecewise linear interpolation}
	\end{equation}
	
    We first derive a uniform bound on $(\partial_{t}\tilde{\varphi}^{N})_{N}$ from \eqref{weak formulation piecewise constant first CH}. Since $(v^{N}\varphi_{h}^{N})_{N}$ and $(\nabla\mu^{N})_{N}$ are bounded in $L^{2}(0,T;L^{2}(\Omega))$, the test function $\zeta$ can be chosen with $\nabla\zeta\in L^{2}(0,T;L^{2}(\Omega))$. Hence, $\zeta\in L^{2}(0,T;H^{1}(\Omega))$ is sufficient  by comparison in \eqref{weak formulation piecewise constant first CH}. This implies that $(\partial_{t}\tilde{\varphi}^{N})_{N}$ is bounded in $L^{2}(0,T;H^{-1}(\Omega))$. Besides, $(\tilde{\varphi}^{N})_{N}$ is bounded in $L^{\infty}(0,T;H^{1}(\Omega))$ from \eqref{bounds of piecewise linear interpolation phase-field}. Therefore, by Aubin-Lions Lemma, we get the strong convergence
	\begin{equation*}
		\tilde{\varphi}^{N}\to\tilde{\varphi}\mbox{ in }L^{2}(0,T^{\prime};L^{2}(\Omega))
	\end{equation*}
	as $N\to\infty$, for all $0<T^{\prime}<T$ and some $\tilde{\varphi}\in L^{\infty}(0,T;L^{2}(\Omega))$. The bound on $(\partial_{t}\tilde{\varphi}^{N})_{N}$ in $L^{2}(0,T;H^{-1}(\Omega))$ and \eqref{difference between piecewise constant interpolation and piecewise linear interpolation} give us that
	\begin{equation*}
		\tilde{\varphi}^{N}-\varphi^{N}\to0\mbox{ in }L^{2}(0,T^{\prime};H^{-1}(\Omega)).
	\end{equation*}
	as $N\to\infty$. Furthermore, we obtain
	\begin{equation*}
		\varphi^{N}\to\tilde{\varphi}\mbox{ in }L^{2}(0,T^{\prime};H^{-1}(\Omega))
	\end{equation*}
	as $N\to\infty$, for all $0<T^{\prime}<T$, which implies that $\varphi=\tilde{\varphi}$. Besides, by interpolation of Bochner spaces, see \cite[][Theorem 5.1.2]{berghInterpolationSpacesIntroduction1976} for details, the strong convergence of $(\varphi^{N})_{N}$ in $L^{2}(0,T^{\prime};H^{-1}(\Omega))$ and its bound in $L^{2}(0,T^{\prime};H^{1}(\Omega))$ imply the strong convergence 
	\begin{equation}
		\varphi^{N}\to\varphi\mbox{ in }L^{2}(0,T^{\prime};L^{2}(\Omega)).
		\label{strong convergence of piecewise constant interpolation phase-field which will be improved}
	\end{equation}
    
	Next, we want to improve the strong convergence of $(\varphi^{N})_{N}$ by interpolation inequalities and find convergence result for $(\mathrm{D}\mathcal{E}_{\mathrm{pf},\kappa}(\varphi^{N}))_{N}$. By Lemma~\ref{estimate from discrete second CH}, we obtain that $(\mathrm{D}\mathcal{E}_{\mathrm{pf},\kappa}(\varphi^{N}))_{N}$ is bounded in $L^{2}(0,T^{\prime};L^{2}(\Omega))$ for all $0<T^{\prime}<T$. Moreover, in~\eqref{weak formulation piecewise constant second CH}, the right-hand side is actually $(\mathrm{D} \mathcal{E}_{\mathrm{pf},\kappa}(\varphi^{N}))_{N}$, while the left-hand side weakly converges to $\mu+\kappa\varphi$ in $L^{2}(0,T^{\prime};L^{2}(\Omega))$. Hence, we have
	\begin{equation}
	    \begin{aligned}
		\mathrm{D} \mathcal{E}_{\mathrm{pf},\kappa}(\varphi^N)
		=-\Delta\varphi^{N}+W_{\kappa}^{\prime}(\varphi^{N})
		&=\mu^{N}+\frac{\kappa}{2}(\varphi^{N}+\varphi_{h}^{N})
		\nonumber\\
        &\rightharpoonup \mu+\kappa\varphi
		\mbox{ in }L^{2}(0,T^{\prime};L^{2}(\Omega)).
        \label{weak convergence of subdifferential of phase field energy}
	\end{aligned}
	\end{equation}
	This also implies that
	\begin{equation}
		\lVert \mathrm{D} \mathcal{E}_{\mathrm{pf},\kappa}(\varphi^{N})\rVert_{L^{2}(0,T^{\prime};L^{2}(\Omega))}\leq C
		\label{estimate on the subdifferential on the convex phase-field energy for discrete solution}
	\end{equation}
	for all $N\in\mathbb{N}$. Furthermore, by~\eqref{estimate on element in subdifferntial domain}, we have 
	\begin{equation*}
		\lVert \varphi^{N}\rVert_{H^2}^{2}
		+\lVert W_{\kappa}^{\prime}(\varphi^{N})\rVert_{L^2}^{2}
		+\int_{\Omega}W_{\kappa}^{\prime\prime}(\varphi^{N})\abs{\nabla \varphi^{N}}^2\dd{x}
		\leq C\left(\lVert \mathrm{D} \mathcal{E}_{\mathrm{pf},\kappa}(\varphi^{N})\rVert_{L^2}^{2}+\lVert \varphi^{N}\rVert_{L^2}^{2}+1\right),
	\end{equation*}
	The boundedness of $(\mathrm{D} \mathcal{E}_{\mathrm{pf},\kappa}(\varphi^{N}))_{N}$ together with the bounds on $(\varphi^{N})_{N}$ in $L^{2}(0,T^{\prime};L^{2}(\Omega))$ implies that
	\begin{equation}
		(\varphi^{N})_{N}\mbox{ is bounded in }L^{2}(0,T^{\prime};H^{2}(\Omega)).
		\label{extra bound of discrete phase-field}
	\end{equation}
	Hence, by interpolation of Bochner spaces and the strong convergence of $(\varphi^{N})_{N}$ in $L^{2}(0,T^{\prime};L^{2}(\Omega))$ from \eqref{strong convergence of piecewise constant interpolation phase-field which will be improved}, we obtain
	\begin{equation}
		\varphi^{N}\to\varphi\mbox{ in }L^{2}(0,T^{\prime};H^{1}(\Omega)).
		\label{strong convergence of piecewise constant interpolation phase-field}
	\end{equation}
	Since $(\rho^{N})_{N}$ depends in an affine linear way on $(\varphi^{N})_{N}$ by formula~\eqref{assumption on density}, we have the same convergence result for $(\rho^{N})_{N}$.
    
	Now, we derive a uniform bound on $(\partial_{t}(\widetilde{\rho v}^{N}))_{N}$ from~\eqref{weak formulation piecewise constant velocity}. For this, we argue by comparison in~\eqref{weak formulation piecewise constant velocity} and therefore, we now derive uniform estimates for all the other terms in~\eqref{weak formulation piecewise constant velocity}. In particular, for all $0<T^{\prime}<T$, we have the following estimates:\\
	Notice that $\rho_{h}^{N}v^{N}\otimes v^{N}$ contains terms of the form $\rho_{h}^{N}v_{i}^{N}v_{j}^{N}$. Since $(v^{N})_{N}$ is bounded both in $L^{\infty}(0,T^{\prime};L^{2}(\Omega))$ by~\eqref{bounds of discrete velocity} and $L^{2}(0,T^{\prime};H^{1}(\Omega))\hookrightarrow L^{2}(0,T^{\prime};L^{6}(\Omega))$ and $(\rho_{h}^{N})_{N}$ is bounded in $L^{\infty}(\Omega\times(0,T^{\prime}))$, we obtain
	\begin{equation*}
	    \begin{aligned}
		\lVert \rho_{h}^{N}v^{N}\otimes v^{N}\rVert_{L^{2}(0,T^{\prime};L^{\frac{3}{2}}(\Omega))}
		&\leq C\lVert \rho_{h}^{N}\rVert_{L^{\infty}(\Omega\times(0,T^{\prime}))}
		\lVert v^{N}\rVert_{L^{\infty}(0,T^{\prime};L^{2}(\Omega))}
		\lVert v^{N}\rVert_{L^{2}(0,T^{\prime};L^{6}(\Omega))}
		\\
		&\leq C\lVert \rho^{N}\rVert_{L^{\infty}(\Omega\times(0,T^{\prime}))}
		\lVert v^{N}\rVert_{L^{\infty}(0,T^{\prime};L^{2}(\Omega))}
		\lVert v^{N}\rVert_{L^{2}(0,T^{\prime};H^{1}(\Omega))},
    \end{aligned}
	\end{equation*}
	where we used that $\lVert \rho_{h}^{N}\rVert_{L^{\infty}(\Omega\times(0,T^{\prime}))}=\lVert \rho^{N}\rVert_{L^{\infty}(\Omega\times(0,T^{\prime}))}$.\\
	Since $(v^{N})_{N}$ is bounded in $L^{2}(0,T^{\prime};H^{1}(\Omega))$ by~\eqref{bounds of discrete velocity} and $(\nu(\varphi_{h}^{N}))_{N}$ is bounded from above by a positive constant thanks to~\eqref{assumption on coefficients}, we obtain
	\begin{equation*}
		\lVert 2\nu(\varphi_{h}^{N})\sym{\nabla v^{N}}\rVert_{L^{2}(0,T^{\prime};L^{2}(\Omega))}
		\leq C\lVert v^{N}\rVert_{L^{2}(0,T^{\prime};H^{1}(\Omega))}.
	\end{equation*}
	Since $(S^{N})_{N}$ is bounded in $L^{2}(0,T^{\prime};H^{1}(\Omega))$ by~\eqref{bounds of discrete stress} and $(\eta(\varphi_{h}^{N}))_{N}$ is bounded from above by a positive constant again by~\eqref{assumption on coefficients}, we have
	\begin{equation*}
		\lVert \eta(\varphi_{h}^{N})S^{N}\rVert_{L^{2}(0,T^{\prime};L^{2}(\Omega))}
		\leq C\lVert S^{N}\rVert_{L^{2}(0,T^{\prime};H^{1}(\Omega))}.
	\end{equation*}
	Notice that $v^{N}\nabla\mu^{N}$ consists of terms of the form $v_{i}^{N}\partial_{x_{j}}\mu^{N}$. Since $(\nabla\mu^{N})_{N}$ is bounded in $L^{2}(0,T^{\prime};L^{2}(\Omega))$ by~\eqref{bounds of discrete chemical potenial} and $(v^{N})_{N}$ is bounded in $L^{\infty}(0,T^{\prime};L^{2}(\Omega))$ and in $L^{2}(0,T^{\prime};H^{1}(\Omega))\hookrightarrow L^{2}(0,T^{\prime};L^{6}(\Omega))$ by~\eqref{bounds of discrete velocity}, we find
	\begin{equation*}
		\lVert v_{i}^{N}\partial_{x_{j}}\mu^{N}\rVert_{L^{2}(0,T^{\prime};L^{1}(\Omega))}
		\leq C\lVert v_{i}^{N}\rVert_{L^{\infty}(0,T^{\prime};L^{2}(\Omega))}
		\lVert \partial_{x_{j}}\mu^{N}\rVert_{L^{2}(0,T^{\prime};L^{2}(\Omega))}
	\end{equation*}
	and
	\begin{equation*}
		\lVert v_{i}^{N}\partial_{x_{j}}\mu^{N}\rVert_{L^{1}(0,T^{\prime};L^{\frac{3}{2}}(\Omega))}
		\leq C \lVert v_{i}^{N}\rVert_{L^{2}(0,T^{\prime};L^{6}(\Omega))}
		\lVert \partial_{x_{j}}\mu^{N}\rVert_{L^{2}(0,T^{\prime};L^{2}(\Omega))},
	\end{equation*}
	which implies that $(v_{i}^{N}\partial_{x_{j}}\mu^{N})_{N}$ is bounded in $L^{2}(0,T^{\prime};L^{1}(\Omega))$ and in $L^{1}(0,T^{\prime};L^{\frac{3}{2}}(\Omega))$. Now, by interpolation of Bochner spaces, we obtain here
	\begin{equation*}
		\left(L^{2}(0,T^{\prime};L^{1}(\Omega)),L^{1}(0,T^{\prime};L^{\frac{3}{2}}(\Omega))\right)_{\theta}=L^{\frac{8}{7}}(0,T^{\prime};L^{\frac{4}{3}}(\Omega)),
	\end{equation*}
	with $\theta=\frac{3}{4}$. This implies
	\begin{equation*}
		\lVert v^{N}\nabla\mu^{N}\rVert_{L^{\frac{8}{7}}(0,T^{\prime};L^{\frac{4}{3}}(\Omega))}
		\leq C\lVert v^{N}\nabla\mu^{N}\rVert_{L^{2}(0,T^{\prime};L^{1}(\Omega))}^{1-\theta} 
		\lVert v^{N}\nabla\mu^{N}\rVert_{L^{1}(0,T^{\prime};L^{\frac{3}{2}}(\Omega))}^{\theta}.
	\end{equation*}
	Since $(\mu^{N})_{N}$ is bounded in $L^{2}(0,T^{\prime};H^{1}(\Omega))\hookrightarrow L^{2}(0,T^{\prime};L^{6}(\Omega))$ by \eqref{bounds of discrete chemical potenial} and \eqref{bound of mean value of discrete chemical potential}. Moreover, $(\nabla\varphi_{h}^{N})_{N}$ is bounded in $L^{\infty}(0,T^{\prime};L^{2}(\Omega))$. Thus, we obtain
	\begin{equation*}
	    \begin{aligned}
		\lVert \mu^{N}\nabla\varphi_{h}^{N}\rVert_{L^{2}(0,T^{\prime};L^{\frac{3}{2}}(\Omega))}
		&\leq C\lVert \mu^{N}\rVert_{L^{2}(0,T^{\prime};L^{6}(\Omega))}
		\lVert \nabla\varphi_{h}^{N}\rVert_{L^{\infty}(0,T^{\prime};L^{2}(\Omega))}
		\\
		&\leq C\lVert \mu^{N}\rVert_{L^{2}(0,T^{\prime};H^{1}(\Omega))}
		\lVert \varphi^{N}\rVert_{L^{\infty}(0,T^{\prime};H^{1}(\Omega))},
	\end{aligned}
	\end{equation*}
	where we again used $\lVert \nabla\varphi_{h}^{N}\rVert_{L^{\infty}(0,T^{\prime};L^{2}(\Omega))}=\lVert \nabla\varphi^{N}\rVert_{L^{\infty}(0,T^{\prime};L^{2}(\Omega))}$.
    
	Altogether, all these terms are bounded in $L^{\frac{8}{7}}(0,T^{\prime};L^{\frac{4}{3}}(\Omega))$, so we can choose test function $\Phi$ in~\eqref{weak formulation piecewise constant velocity} such that 
	\begin{equation*}
		\Phi,\nabla\Phi\in \left(L^{\frac{8}{7}}(0,T^{\prime};L^{\frac{4}{3}}(\Omega))\right)^{\prime}=L^{8}(0,T^{\prime};L^{4}(\Omega)).
	\end{equation*} 
	Hence, choosing test functions $\Phi\in L^{8}(0,T^{\prime};W^{1,4}(\Omega))$ is sufficient. This implies that $(\partial_{t}(\widetilde{\rho v}^{N}))_{N}$ is bounded in $L^{\frac{8}{7}}(0,T^{\prime};W^{-1,4}(\Omega))$. Besides, $(\widetilde{\rho v}^{N})_{N}$ is bounded in $L^{2}(0,T^{\prime};W^{1,\frac{3}{2}}(\Omega))$ from~\eqref{bounds of piecewise linear interpolation momentum}. Therefore, by the Aubin-Lions Lemma, we obtain the strong convergence
	\begin{equation*}
		\widetilde{\rho v}^{N}\to \widetilde{\rho v}\text{ in }L^{2}(0,T^{\prime};L^{2}(\Omega)),
	\end{equation*}
	as $N\to\infty$ for some $\widetilde{\rho v}\in L^{\infty}(0,T^{\prime};L^{2}(\Omega))\cap L^{2}(0,T^{\prime};W^{1,\frac{3}{2}}(\Omega))$. Moreover, \eqref{difference between piecewise constant interpolation and piecewise linear interpolation} and the bound on $(\partial_{t}(\widetilde{\rho v}^N))_{N}$ in $L^{\frac{8}{7}}(0,T^{\prime};W^{-1,4}(\Omega))$ obtained above imply that
	\begin{equation*}
		\widetilde{\rho v}^{N}-\rho^{N}v^{N}\to0\mbox{ in }L^{\frac{8}{7}}(0,T^{\prime};W^{-1,4}(\Omega)).
	\end{equation*}
	as $N\to\infty$. Thus, we obtain
	\begin{equation*}
		\rho^{N}v^{N}\to\widetilde{\rho v}\mbox{ in }L^{\frac{8}{7}}(0,T^{\prime};W^{-1,4}(\Omega)),
	\end{equation*}
	as $N\to\infty$ which implies $\widetilde{\rho v}=\rho v$. Moreover, by interpolation of Bochner spaces, the strong convergence of $(\rho^{N}v^{N})_{N}$ in $L^{\frac{8}{7}}(0,T^{\prime};W^{-1,4}(\Omega))$ and the bound of $(\rho^{N}v^{N})_{N}$ in $L^{2}(0,T^{\prime};W^{1,\frac{3}{2}}(\Omega))$ imply the strong convergence in $L^{\frac{16}{11}}(0,T^{\prime};L^{\frac{24}{11}}(\Omega))$ due to the choice of $\theta=\frac{1}{2}$ for the interpolation. By continuous embedding $L^{\frac{16}{11}}(0,T^{\prime};L^{\frac{24}{11}}(\Omega))\hookrightarrow L^{\frac{16}{11}}(0,T^{\prime};L^{2}(\Omega))$, we get the strong convergence in $L^{\frac{16}{11}}(0,T^{\prime};L^{2}(\Omega))$. By repeating this argument with $L^{\frac{16}{11}}(0,T^{\prime};L^{2}(\Omega))$ and $L^{\infty}(0,T^{\prime};L^{2}(\Omega))$, we arrive at the strong convergence 
	\begin{equation}
		\rho^{N}v^{N}\to\rho v\text{ in }L^{2}(0,T^{\prime};L^{2}(\Omega)).
		\label{strong convergence of product of piecewise constant interpolation velocity and phase-field}
	\end{equation}
	
    Now, we want to obtain the strong convergence of $(v^{N})_{N}$ from~\eqref{strong convergence of product of piecewise constant interpolation velocity and phase-field}. To this end, first notice that 
	\begin{equation*}
	    \begin{aligned}
		&\abs{\int_{0}^{T^{\prime}}\int_{\Omega}\rho^{N}\abs{v^{N}}^{2} \dd{x}\dd{t}-\int_{0}^{T^{\prime}}\int_{\Omega}\rho\abs{v}^{2} \dd{x}\dd{t}} 
		\\
		\leq&\abs{\int_{0}^{T^{\prime}}\int_{\Omega}(\rho^{N}v^{N}-\rho v)v^{N} \dd{x}\dd{t}}+\abs{\int_{0}^{T^{\prime}}\int_{\Omega}\rho v(v^{N}-v) \dd{x}\dd{t}},
	\end{aligned}
	\end{equation*}
	where the first term in the last line tends to $0$ due to the strong convergence of $(\rho^{N}v^{N})_{N}$ in $L^{2}(0,T^{\prime};L^{2}(\Omega))$ by~\eqref{strong convergence of product of piecewise constant interpolation velocity and phase-field} and the second term tends to $0$, thanks to the weak convergence of $(v^{N})_{N}$ in $L^{2}(0,T^{\prime};L^{2}(\Omega))$ by \eqref{weak convergence of piecewise linear interpolation velocity}. Besides, in combination with $(\rho^{N})^{\frac{1}{2}}v^{N}\rightharpoonup(\rho)^{\frac{1}{2}}v$ in $L^{2}(0,T^{\prime};L^{2}(\Omega))$, we obtain that $(\rho^{N})^{\frac{1}{2}}v^{N}\to(\rho)^{\frac{1}{2}}v$ in $L^{2}(0,T^{\prime};L^{2}(\Omega))$. Since $(\rho^{N})_{N}\to\rho$ almost everywhere in $\Omega\times(0,T)$ and $0<\rho_{1}\leq\rho^{N}\leq\rho_{2}$, we obtain
	\begin{equation}
		v^{N}=(\rho^{N})^{-\frac{1}{2}}(\rho^{N})^{\frac{1}{2}}v^{N}\to v\mbox{ in }L^{2}(0,T^{\prime};L^{2}(\Omega)).
		\label{strong convergence of piecewise constant interpolation velocity}
	\end{equation}
	\paragraph{Step 6: Limit passage in the weak formulation~\eqref{weak formulation piecewise constant velocity}-\eqref{weak formulation piecewise constant second CH}.}
	Next we want to pass \eqref{weak formulation piecewise constant velocity}-\eqref{weak formulation piecewise constant second CH} to the limit $N\to\infty$. Notice that for all divergence-free test functions $\Phi$, we have the following relation
	\begin{equation*}
	    \begin{aligned}
		\int_{0}^{T}\int_{\Omega}\mu^{N}\nabla\varphi_{h}^{N}\cdot\Phi \dd{x}\dd{t}
		=&-\int_{0}^{T}\int_{\Omega}\nabla\mu^{N}\varphi_{h}^{N}\cdot\Phi \dd{x}\dd{t}
		\\
		\rightarrow&-\int_{0}^{T}\int_{\Omega}\nabla\mu\varphi\cdot\Phi \dd{x}\dd{t}
		=\int_{0}^{T}\int_{\Omega}\mu\nabla\varphi\cdot\Phi \dd{x}\dd{t}.
	\end{aligned}
	\end{equation*}
	Then the limit passage of~\eqref{weak formulation piecewise constant velocity}-\eqref{weak formulation piecewise constant first CH} follows from the convergence results~\eqref{convergence of approximating initial datum of phase-field}, \eqref{immediate convergence results for piecewise linear interpolations}, \eqref{strong convergence of piecewise constant interpolation phase-field}-\eqref{strong convergence of piecewise constant interpolation velocity}. 
	To pass to the limit in \eqref{weak formulation piecewise constant second CH}, we use \eqref{weak convergence of subdifferential of phase field energy} and the fact that $\mathrm{D}\mathcal{E}_{\mathrm{pf},\kappa}$ is a maximal monotone operator and apply~\cite[][Proposition IV.1.6]{Showalter1997}. Hence, we conclude~\eqref{weak second CH TPP}.
	\paragraph{Step 7: The partial energy estimates.}
    For all $N\in\mathbb{N}$, for all $t\in(t_{k},t_{k+1})$ and for all $k\in\{0,1,\ldots,N\}$, define $E_{\mathrm{k}}^{N}(t)$ to be the piecewise linear interpolation of the kinetic energy, i.e.,
    \begin{equation}
        E_{\mathrm{k}}^{N}(t):=\frac{(k+1)h-t}{h}\mathcal{E}_{\mathrm{k}}(\varphi_{k},v_{k})+\frac{t-kh}{h}\mathcal{E}_{\mathrm{k}}(\varphi_{k+1},v_{k+1})
     \end{equation}
    and define $D_{\mathrm{s}}^{N}(t)$ to be the piecewise constant dissipation as 
    \begin{equation}
        \begin{aligned}
        D_{\mathrm{s}}^{N}(t):=&
		\int_{\Omega}2\nu(\varphi_{h}^{N})\abs{\sym{\nabla v^{N}}}^{2} \dd{x}
		+\int_{\Omega}\eta(\varphi_{h}^{N})S^{N}:\nabla v^{N} \dd{x}
        \nonumber\\
		&-\int_{\Omega}\mu^{N}(\nabla\varphi_{h}^{N}\cdot v^{N}) \dd{x}
        -\langle f^{N}, v^{N}\rangle_{H^{1}}.
    \end{aligned}
    \end{equation}
    for all $t\in[t_{k},t_{k+1})$. Then from \eqref{discrete velocity energy}, we have
    \begin{equation}
        -\dv{t}E_{\mathrm{k}}^{N}(t)=-\frac{1}{h}\mathcal{E}_{\mathrm{k}}(\varphi_{k},v_{k})+\frac{1}{h}\mathcal{E}_{\mathrm{k}}(v_{k+1})\geq D_{\mathrm{s}}^{N}(t).
        \label{piecewise linear kinetic energy dissipation time derivative estimate}
    \end{equation}
    Now, we multiply both sides of~\eqref{piecewise linear kinetic energy dissipation time derivative estimate} by arbitrary $\phi\in\tilde{C}([0,T^{\prime}])$, integrate over time, and use integration by parts. This results in
    \begin{equation}
        \mathcal{E}_{k}(\varphi_{0},v_{0})
        \geq-\int_{0}^{T^{\prime}}\phi^{\prime}(t) E_{\mathrm{k}}^{N}(t)\dd{t}
        +\int_{0}^{T^{\prime}}\phi(t)D_{\mathrm{s}}^{N}(t)\dd{t}.
        \label{limit passage in discrete partial kinetic energy estimate part one}
    \end{equation}
    By the strong convergence of $(v^{N})_{N}$ in $L^{2}(0,T^{\prime};L^{2}(\Omega))$ from~\eqref{strong convergence of piecewise constant interpolation velocity} and the strong convergence of $(\varphi^{N})_{N}$ in $L^{2}(0,T^{\prime};H^{1}(\Omega))$ from~\eqref{strong convergence of piecewise constant interpolation phase-field}, we derive
    \begin{equation}
        \lim_{N\to\infty}-\int_{0}^{T^{\prime}}\phi^{\prime}(t) E_{\mathrm{k}}^{N}(t)\dd{t}\to-\int_{0}^{T^{\prime}}\phi^{\prime}(t) \mathcal{E}_{\mathrm{k}}(\varphi(t),v(t))\dd{t}.
        \label{limit passage in discrete partial kinetic energy estimate part two}
    \end{equation}
    Next, by the weak convergence of $(v^{N})_{N}$ in $L^{2}(0,T^{\prime};H^{1}(\Omega))$ from~\eqref{weak convergence of piecewise linear interpolation velocity space derivative}, the strong convergence of  $(\varphi^{N})_{N}$ in $L^{2}(0,T^{\prime};H^{1}(\Omega))$ from~\eqref{strong convergence of piecewise constant interpolation phase-field} and assumption~\eqref{assumption on coefficients}, we obtain
    \begin{equation}
        \int_{0}^{T^{\prime}}\phi\int_{\Omega}2\nu(\varphi)\abs{\sym{\nabla v}}^{2}\dd{x}\dd{t}
        \leq\liminf_{N\to\infty}\int_{0}^{T^{\prime}}\phi\int_{\Omega}2\nu(\varphi_{h}^{N})\abs{\sym{\nabla v^{N}}}^{2}\dd{x}\dd{s}.
        \label{limit passage in discrete partial kinetic energy estimate part three}
    \end{equation}
    Moreover, by the strong convergence of $(v^{N})_{N}$ in $L^{2}(0,T^{\prime};L^{2}(\Omega))$ from~\eqref{strong convergence of piecewise constant interpolation velocity}, the strong convergence of $(\varphi^{N})_{N}$ in $L^{2}(0,T^{\prime};H^{1}(\Omega))$ from~\eqref{strong convergence of piecewise constant interpolation phase-field}, the weak convergence of  $(S^{N})_{N}$ in $L^{2}(0,T^{\prime};H^{1}(\Omega))$ from~\eqref{weak convergence of piecewise linear interpolation stress space derivative} and assumption~\eqref{assumption on coefficients}, we derive 
    \begin{equation}
    \begin{aligned}
        &\lim_{N\to\infty}\int_{0}^{t}\phi\int_{\Omega}\eta(\varphi_{h}^{N})S^{N}:\nabla v^{N} \dd{x}\dd{s}
        \\
        =&\lim_{N\to\infty}\left(-
        \int_{0}^{t}\phi\int_{\Omega}\eta^{\prime}(\varphi_{h}^{N})v^{N}\otimes\nabla\varphi_{h}^{N}:S^{N} \dd{x}\dd{s}
        -\int_{0}^{t}\phi\int_{\Omega}\eta(\varphi_{h}^{N})\divge{S^{N}}\cdot v^{N} \dd{x}\dd{s}
        \right)
        \\
        =&-\int_{0}^{t}\phi\int_{\Omega}\eta^{\prime}(\varphi)v\otimes\nabla\varphi:S \dd{x}\dd{s}
        -\int_{0}^{t}\phi\int_{\Omega}\eta(\varphi)\divge{S}\cdot v \dd{x}\dd{s}
        \\
        =&\int_{0}^{t}\phi\int_{\Omega}\eta(\varphi)S:\nabla v \dd{x}\dd{s}
        \label{limit passage in discrete partial kinetic energy estimate part four}
    \end{aligned}
    \end{equation}
    with the help of integration by parts and Assumption~\eqref{assumption on coefficients}. Now, notice that we have the embedding $L^{2}(0,T^{\prime};H^{1}(\Omega))\hookrightarrow L^{2}(0,T^{\prime};L^{6}(\Omega))$, so we have the weak convergence of $(\mu^{N})_{N}$ in $L^{2}(0,T^{\prime};L^{6}(\Omega))$ from~\eqref{weak convergence of piecewise linear interpolation chemical potential}. Now, we use the interpolation of Bochner spaces to improve the strong convergence of $(v^{N})_{N}$ and $(\nabla\varphi^{N})_{N}$. By choosing $\theta=\frac{2}{3}$, the strong convergence of $(v^{N})_{N}$ in $L^{2}(0,T^{\prime};L^{2}(\Omega))$ from~\eqref{strong convergence of piecewise constant interpolation velocity} and bound of $(v^{N})_{N}$ in $L^{\infty}(0,T^{\prime};L^{2}(\Omega))$ from~\eqref{bounds of discrete velocity} result in the strong convergence of $(v^{N})_{N}$ in $L^{6}(0,T^{\prime},L^{2}(\Omega))$. Then by choosing $\theta=\frac{1}{4}$ in the interpolation of Bochner spaces, the strong convergence of $(v^{N})_{N}$ in $L^{6}(0,T^{\prime},L^{2}(\Omega))$ and the bound of $(v^{N})_{N}$ in $L^{2}(0,T^{\prime};H^{1}(\Omega))\hookrightarrow L^{2}(0,T^{\prime};L^{6}(\Omega))$ from~\eqref{bounds of discrete velocity} yields the strong convergence of $(v^{N})_{N}$ in $L^{4}(0,T^{\prime};L^{\frac{12}{5}}(\Omega))$. A similar argument gives us the strong convergence of $(\nabla\varphi^{N})_{N}$ in $L^{4}(0,T^{\prime};L^{\frac{12}{5}}(\Omega))$. Therefore, by the weak-strong-strong convergence, we obtain
    \begin{equation}
        \lim_{N\to\infty}\int_{0}^{T^{\prime}}\phi\int_{\Omega}\mu^{N}(\nabla \varphi_{h}^{N}\cdot v^{N})\dd{x}\dd{t}
        =\int_{0}^{T^{\prime}}\phi\int_{\Omega}\mu(\nabla \varphi\cdot v)\dd{x}\dd{t}
        \label{limit passage in discrete partial kinetic energy estimate part five}
    \end{equation}
    By the weak convergence of $(v^{N})_{N}$ in $L^{2}(0,T^{\prime};H^{1}(\Omega))$ from~\eqref{weak convergence of piecewise linear interpolation velocity space derivative}, we have
    \begin{equation}
        \lim_{N\to+\infty}\int_{0}^{t}\phi\langle f^{N}, v^{N}\rangle_{H^{1}}\dd{s}
        =\int_{0}^{t}\phi\langle f, v\rangle_{H^{1}}\dd{s}.
        \label{limit passage in discrete partial kinetic energy estimate part six}
    \end{equation}
    Inserting~\eqref{limit passage in discrete partial kinetic energy estimate part two}-\eqref{limit passage in discrete partial kinetic energy estimate part six} into~\eqref{limit passage in discrete partial kinetic energy estimate part one} gives~\eqref{partial energy inequality kinetic}. Similarly, we also prove~\eqref{partial energy inequality phase field}.
    \paragraph{Step 8: The evolutionary variational inequality for the internal stress.}
    Now, we show the evolutionary variational inequality for the internal stress. To this end, we derive
    \begin{equation}
        \begin{aligned}
        \mathcal{E}_{\mathrm{e}}(S_{0})\geq&
        -\int_{0}^{T^{\prime}}\phi^{\prime}\mathcal{E}_{e}(S^{N})\dd{t}
        +\int_{0}^{T^{\prime}}\phi\int_{\Omega}\gamma\abs{\nabla S^{N}}^{2}\dd{x}\dd{t}
        \\
        &+\int_{0}^{T^{\prime}}\phi\langle \xi^{N},S^{N}\rangle_{H_{\mathrm{sym,Tr}}^{1}}\dd{t}
        -\int_{0}^{T^{\prime}}\phi\int_{\Omega} \eta(\varphi_{h}^{N})S^{N}:\sym{\nabla v^{N}}\dd{x}\dd{t}
        \label{partial energy discrete stress energy}
    \end{aligned}
    \end{equation}
    from~\eqref{discrete stess energy} by using the same argument as for~\eqref{limit passage in discrete partial kinetic energy estimate part one}. Here, $\phi\in \tilde{C}([0,T^{\prime}])$ is arbitrary. Then, for all $\tilde{S}\in C_{0,\mathrm{sym,Tr}}^{\infty}(\Omega\times[0,T))$, we choose $\tilde{S}^{k+1}=-\int_{kh}^{(k+1)h}\phi\tilde{S} \dd{t}$ to be the test function in \eqref{discrete stress} and sum over $k\in\{0,1\ldots,N\}$ to derive
	\begin{equation}
	    \begin{aligned}
		-&\int_{0}^{T^{\prime}}\int_{\Omega}\partial_{t,h}^{-}(S^{N}):\phi\tilde{S} \dd{x}\dd{t}
		-\int_{0}^{T^{\prime}}\phi\int_{\Omega}(v^{N}\cdot\nabla S^{N}):\tilde{S} \dd{x}\dd{t} 
		\nonumber\\
		-&\int_{0}^{T^{\prime}}\phi\int_{\Omega}(S^{N}\skw{\nabla v^{N}}-\skw{\nabla v^{N}}S^{N}):\tilde{S} \dd{x}\dd{t}
		-\int_{0}^{T^{\prime}}\phi\langle \xi^{N}:\tilde{S} \rangle_{H_{\mathrm{sym,Tr}}^{1}}\dd{t} 
		\nonumber\\
		-&\int_{0}^{T^{\prime}}\phi\int_{\Omega}\gamma\nabla S^{N}:\nabla\tilde{S} \dd{x}\dd{t}
		 =-\int_{0}^{T^{\prime}}\phi\int_{\Omega}\eta(\varphi_{h}^{N})\sym{\nabla v^{N}}:\tilde{S} \dd{x}\dd{t}.
		\label{weak stress formulation tested with tilde s}
	\end{aligned}
	\end{equation}
    Notice that  
    \begin{equation*}
        \langle \xi^{N},S^{N}-\tilde{S}\rangle_{H_{\mathrm{sym,Tr}}^{1}}
        \geq\widetilde{\mathcal{P}}(\varphi_{h}^{N};S^{N})
        -\widetilde{\mathcal{P}}(\varphi_{h}^{N};\tilde{S})
        =\mathcal{P}(\varphi_{h}^{N};S^{N})
        -\mathcal{P}(\varphi_{h}^{N};\tilde{S}),
    \end{equation*}
    since $\xi^{N}\in\partial\widetilde{\mathcal{P}}(\varphi_{h}^{N};S^{N})$ and by~\eqref{restriction of dissipation potential}. Then, by summing \eqref{partial energy discrete stress energy} and \eqref{weak stress formulation tested with tilde s}, we obtain
    \begin{equation}\label{evoluationary inequality needed to be passed}
        \begin{aligned}
        -&\int_{0}^{T^{\prime}}\phi^{\prime}\int_{\Omega}\frac{1}{2}\abs{S^{N}}^{2}\dd{x}\dd{t}
        -\int_{0}^{T^{\prime}}\int_{\Omega}\partial_{t,h}^{-}(S^{N}):\phi\tilde{S} \dd{x}\dd{t}
        \\
        -&\int_{0}^{T^{\prime}}\phi\int_{\Omega}(v^{N}\cdot\nabla S^{N}):\tilde{S}+(S^{N}\skw{\nabla v^{N}}-\skw{\nabla v^{N}}S^{N}):\tilde{S} \dd{x}\dd{t}
        \\
        +&\int_{0}^{T^{\prime}}\phi\left(\mathcal{P}(\varphi_{h}^{N};S^{N})-\mathcal{P}(\varphi_{h}^{N};\tilde{S})\right)\dd{x}\dd{t}
        +\int_{0}^{T^{\prime}}\phi\int_{\Omega}\gamma\nabla S^{N}\threedotsbin(\nabla S^{N}-\nabla\tilde{S})\dd{x}\dd{t}
        \\
        -&\int_{0}^{T^{\prime}}\phi\int_{\Omega}\eta(\varphi_{h}^{N})\sym{\nabla v^{N}}:(S^{N}-\tilde{S})\dd{x}\dd{t}\leq\int_{\Omega}\frac{1}{2}\abs{S_{0}}^{2}\dd{x}.
    \end{aligned}
    \end{equation}
    Now, we carry out the limit passage in~\eqref{evoluationary inequality needed to be passed}. Since $\phi^{\prime}\leq0$, the functional 
    \begin{equation}
        S\mapsto-\int_{0}^{T^{\prime}}\phi^{\prime}\int_{\Omega}\frac{1}{2}\abs{S(t)}^{2}\dd{x}\dd{t}
    \end{equation}
    is convex and continuous on $L^{2}(0,T^{\prime};L^{2}(\Omega))$. Therefore, this functional is weakly lower semicontinuous on $L^{2}(0,T^{\prime};L^{2}(\Omega))$, which implies
    \begin{equation}
        -\int_{0}^{T^{\prime}}\phi^{\prime}\int_{\Omega}\frac{1}{2}\abs{S(t)}^{2}\dd{x}\dd{t}
        \leq
        \liminf_{N\to\infty}\left(-\int_{0}^{T^{\prime}}\phi^{\prime}\int_{\Omega}\frac{1}{2}\abs{S^{N}(t)}^{2}\dd{x}\dd{t}\right),
        \label{limit passage for generalized solution part one}
    \end{equation}
    thanks to the weak* convergence of $(S^{N})_{N}$ in $L^{\infty}(0,T^{\prime};L^{2}(\Omega))$ from~\eqref{weak convergence of piecewise linear interpolation stress}. Moreover, by the weak convergence of $(S^{N})_{N}$ in $L^{2}(0,T^{\prime};H^{1}(\Omega))$ from~\eqref{weak convergence of piecewise linear interpolation stress space derivative} and with the help of an integration by parts for difference quotients, we deduce
    \begin{equation}
        \begin{aligned}
        &-\int_{0}^{T^{\prime}}\int_{\Omega}\partial_{t,h}^{-}(S^{N}):\phi\tilde{S} \dd{x}\dd{t}
        \\
        =&-\int_{-h}^{0}\int_{\Omega}S_{0}:\frac{1}{h}\phi(t+h)\tilde{S}(t+h)\dd{x}\dd{t}
        +\int_{0}^{T^{\prime}}\int_{\Omega}S^{N}:\partial_{t,h}^{+}(\phi\tilde{S})\dd{x}\dd{t}
        \\
		\to&-\int_{\Omega}S_{0}:\tilde{S}(0)\dd{x}+\int_{0}^{T^{\prime}}\int_{\Omega}S:\partial_{t}(\phi\tilde{S}) \dd{x} \dd{t}.
		\label{limit passage for generalized solution part two}
    \end{aligned}
    \end{equation}
    for all $\tilde{S}\in C_{0,\mathrm{sym,Tr}}^{\infty}(\Omega\times[0,T))$. Next, by the weak convergence of $(S^{N})_{N}$ in $L^{2}(0,T^{\prime};H^{1}(\Omega))$ from \eqref{weak convergence of piecewise linear interpolation stress space derivative}, we have
	\begin{equation}
		\int_{0}^{T^{\prime}}\phi\int_{\Omega}-\nabla S^{N}\threedotsbin\nabla\tilde{S} \dd{x} \dd{t}
		\to\int_{0}^{T^{\prime}}\phi\int_{\Omega}-\nabla S^{N}\threedotsbin\nabla\tilde{S} \dd{x} \dd{t}, 
        \label{limit passage for generalized solution part three}
    \end{equation}
	as well as
	\begin{equation}
		\int_{0}^{T^{\prime}}\phi\int_{\Omega}\nabla S\threedotsbin\nabla S \dd{x} \dd{t}
        \leq\liminf_{N\to\infty}\left(\int_{0}^{T^{\prime}}\phi\int_{\Omega}\nabla S^{N}\threedotsbin\nabla S^{N} \dd{x} \dd{t}\right),
        \label{limit passage for generalized solution part four}   
    \end{equation}
	where we have used the same argument as in proving \eqref{limit passage for generalized solution part one} for the last inequality.\\
    {To see the limit passage regarding $\mathcal{P}(\varphi_{h}^{N};S^{N})$, consider the functional
    \begin{equation*}
        (\varphi,S)\mapsto \int_{0}^{T^{\prime}}\phi(t)\mathcal{P}(\varphi(t);S(t)) \dd{t}=\int_{0}^{T^{\prime}}\int_{\Omega}\phi(t)P(x,\varphi(t,x),S(t,x))\dd{x}\dd{t}.
    \end{equation*}
    By the weak* convergence of $(S^{N})_{N}$ in $L^{\infty}(0,T^{\prime};L^{2}(\Omega))$ from \eqref{weak convergence of piecewise linear interpolation stress space derivative} and strong convergence of $(\varphi^{N})_{N}$ in $L^{2}(0,T^{\prime};H^{1}(\Omega))$ from \eqref{strong convergence of piecewise constant interpolation phase-field} and applying \cite[][Theorem 3]{ADLoffe}, we arrive at
	\begin{equation}
		\int_{0}^{T^{\prime}}\phi\mathcal{P}(\varphi;S)\dd{t}
		\leq\liminf_{N\to\infty}\int_{0}^{T^{\prime}}\phi\mathcal{P}(\varphi_{h}^{N};S^{N})\dd{t}.
        \label{limit passage for generalized solution part five}   
	\end{equation}
    By the continuity of the mapping $\varphi\mapsto{P}(\varphi;S)$ for all $S\in L_{\mathrm{sym,Tr}}^{2}(\Omega)$ and the strong convergence of $(\varphi^{N})_{N}$ in $L^{2}(0,T^{\prime};H^{1}(\Omega))$ from \eqref{strong convergence of piecewise constant interpolation phase-field}, we also have
    \begin{equation}
        -\int_{0}^{T^{\prime}}\phi\mathcal{P}(\varphi;\tilde{S})\dd{t}
        \leq \liminf_{N\to\infty}\left(-\int_{0}^{T^{\prime}}\phi\mathcal{P}(\varphi_{h}^{N};\tilde{S})\dd{t}\right),
        \label{limit passage for generalized solution part six}
    \end{equation}
    with the help of Fatou's lemma and the fact that the integrand is bounded from below.} 
    Notice that by the weak convergence of $(S^{N})_{N}$ in $L^{2}(0,T^{\prime};H^{1}(\Omega))$ from \eqref{weak convergence of piecewise linear interpolation stress space derivative}, the strong convergence of $(v^{N})_{N}$ in $L^{2}(0,T^{\prime};L^{2}(\Omega))$ from \eqref{strong convergence of piecewise constant interpolation velocity}, the strong convergence of $(\varphi^{N})_{N}$ in $L^{2}(0,T^{\prime};H^{1}(\Omega))$ from \eqref{strong convergence of piecewise constant interpolation phase-field} and assumption \eqref{assumption on coefficients}, we obtain
    \begin{equation}
        \begin{aligned}
        &\int_{0}^{T^{\prime}}\phi\int_{\Omega}(v^{N}\cdot\nabla S^{N}):\tilde{S}+(S^{N}\skw{\nabla v^{N}}-\skw{\nabla v^{N}}S^{N}):\tilde{S} \dd{x}\dd{t}
        \\
        \to&\int_{0}^{T^{\prime}}\phi\int_{\Omega}(v\cdot\nabla S):\tilde{S}+(S\skw{\nabla v}-\skw{\nabla v}S):\tilde{S} \dd{x}\dd{t}
        \label{limit passage for generalized solution part seven}
    \end{aligned}
    \end{equation}
    as well as 
    \begin{equation}
        \int_{0}^{T^{\prime}}\phi\int_{\Omega}\eta(\varphi_{h}^{N})\sym{\nabla v^{N}}:(S^{N}-\tilde{S})\dd{x}\dd{t}
        \to\int_{0}^{T^{\prime}}\phi\int_{\Omega}\eta(\varphi)\sym{\nabla v}:(S-\tilde{S})\dd{x}\dd{t}
        \label{limit passage for generalized solution part eight}
    \end{equation}
    with the help of an integration by parts. Inserting~\eqref{limit passage for generalized solution part one}-\eqref{limit passage for generalized solution part eight} into~\eqref{evoluationary inequality needed to be passed} and adding both sides
    \begin{equation*}
        -\int_{0}^{T^{\prime}}\phi^{\prime}\int_{\Omega}\frac{1}{2}\abs{\tilde{S}(t)}^{2}\dd{x}\dd{t}
        -\int_{0}^{T^{\prime}}\phi\int_{\Omega}\partial_{t}\tilde{S}:\tilde{S}\dd{x}\dd{t}
        =\int_{\Omega}\frac{1}{2}\abs{\tilde{S}(0)}^{2}\dd{x},
    \end{equation*} 
    we obtain
    \begin{equation}
        \begin{aligned}
		-&\int_{0}^{T^{\prime}}\phi^{\prime}\int_{\Omega}\frac{1}{2}\abs{ S(t)-\tilde{S}(t)}^{2}\dd{x}\dd{t}
        \\
		&+\int_{0}^{T^{\prime}}\phi\int_{\Omega} \partial_{t}\tilde{S}:(S-\tilde{S}) - v\cdot\nabla S:\tilde{S} - (S\skw{\nabla v}-\skw{\nabla v}S):\tilde{S}\dd{x} \dd{t}
		\\
		&+\int_{0}^{T^{\prime}}\phi\left(\mathcal{P}(\varphi;S)-\mathcal{P}(\varphi;\tilde{S})\right) \dd{t}
		+\int_{0}^{T^{\prime}}\phi\int_{\Omega}\gamma\nabla S:\nabla(S-\tilde{S}) - \eta(\varphi)\sym{\nabla v}:(S-\tilde{S})\dd{x} \dd{t} 
		\\
		\leq&\frac{1}{2}\lVert S_{0}-\tilde{S}(0)\rVert_{L^2}^{2}
		\label{weak generalized solution stress}
	\end{aligned}
    \end{equation}
    Finally, by applying Lemma~\ref{result related to energy estimate} to~\eqref{weak generalized solution stress}, we arrive at~\eqref{generalized solution stress energy estimate TPP}.
    \paragraph{Step 9: $\varphi$ takes value in $(-1,1)$ a.e. in $\Omega\times(0,T)$.}
	By the total energy-dissipation estimate~\eqref{total energy estimate for generalized solution for TPP}, one can see that
	\begin{equation*}
		\int_{\Omega}W(\varphi(t))\dd{x}\leq C
	\end{equation*}
	for a.e. $t\in(0,T)$. With the aid of \eqref{assumption on double well potential W}, this implies that $\varphi\in[-1,1]$ a.e. in $\Omega\times(0,T)$. Now, we show further that $\varphi\in(-1,1)$ a.e. in $\Omega\times(0,T)$. To see this, recall that by \eqref{bounds of discrete chemical potenial} and \eqref{bound of mean value of discrete chemical potential}, $\mu$ is bounded in $L^{2}(0,T^{\prime};L^{2}(\Omega))$ for a.e. $0<T^{\prime}<T$. Besides, we have
	\begin{equation*}
    \begin{aligned}
		\lVert W^{\prime}(\varphi)\rVert_{L^{2}(0,T^{\prime};L^{2}(\Omega))}
		&=\lVert \mu+\Delta\varphi\rVert_{L^{2}(0,T^{\prime};L^{2}(\Omega))}
		\\
		&\leq \lVert \mu\rVert_{L^{2}(0,T^{\prime};L^{2}(\Omega))}+\lVert \Delta\varphi\rVert_{L^{2}(0,T^{\prime};L^{2}(\Omega))}.
	\end{aligned}
    \end{equation*}
	Since $\varphi$ is bounded in $L^{2}(0,T^{\prime};H^{2}(\Omega))$ by \eqref{extra bound of discrete phase-field}, $W^{\prime}(\varphi)$ is bounded in $L^{2}(0,T^{\prime};L^{2}(\Omega))$. Thanks to \eqref{assumption on double well potential W}, we derive that $\varphi\not=-1,1$ a.e. in $\Omega\times(0,T^{\prime})$. Since it holds true for a.e. $0<T^{\prime}<T$, we conclude that $\varphi\in(-1,1)$ a.e. in $\Omega\times(0,T)$.  This proves the assertion.
\end{proof}

\begin{remark}\label{estimate from weak second CH}
	By testing \eqref{weak second CH TPP} with $\varphi-\varphi_{\Omega}$ and repeating the argument as in Lemma \ref{estimate from discrete second CH}, we obtain an estimate
	\begin{equation*}
	    \begin{aligned}
		\lVert W_{\kappa}^{\prime}(\varphi)\rVert_{L^2}
		+\abs{\int_{\Omega}\mu \dd{x}}
		&\leq C\left(\lVert\nabla\mu\rVert_{L^2}+\lVert\nabla\varphi\rVert_{L^2}^2+1\right)
        \\
		\lVert\mathrm{D} \mathcal{E}_{\mathrm{pf},\kappa}(\varphi)\rVert_{L^2}&\leq C\left(\lVert\mu\rVert_{L^2}+1\right)
	\end{aligned}
	\end{equation*}
\end{remark}

\begin{remark}\label{a generalized solution obtained from previous theorem is a dissipative solution}
    By Proposition~\ref{generalized solution is dissipative solution for TPP}, one can see that a weak solution obtained from Theorem~\ref{existence of generalized solution for TPP} is also a dissipative solution for any regularity weight $\mathcal{K}$.
\end{remark}
    \section{Global existence result for the non-regularized two-phase system $\gamma=0$}\label{section Global existence result for the non-regularized two phase system}

In Section~\ref{section Global existence result for the regularized two phase system}, we have shown that, for all $\gamma>0$, there exists a dissipative solution $(v,S,\varphi,\mu)$. Now, we will prove the existence of a dissipative solution for the case $\gamma=0$ by carrying out the limit passage $\gamma\to0$ in this notion of solution.
\begin{theorem}\label{existence of dissipative solution for non-regularized system}
	Let $\gamma=0$. Let $v_{0},S_{0}$ and $\varphi_{0}$ satisfy Assumption \ref{assumptions on the initial data}. Let Assumption \ref{TP assumption} be satisfied, and let $\mathcal{P}$ satisfy Assumption \ref{assumption on the dissipation potential}. Then there exists a dissipative solution  $(v,S,\varphi,\mu)$ of type $\mathcal{K}$ to the limit system \eqref{two phase system alternative} with $\gamma=0$ in the sense of Definition \ref{dissipative solution for two phase problem}, where the regularity weight is given by
	\begin{align}
			\mathcal{K}(\tilde{S})&=\frac{k_{\Omega}^{2}}{\nu_{1}}\lVert \tilde{S} \rVert_{L^{\infty}}^{2}
            \label{regularity weight for two phase problem}
	\end{align}
    for all $(\tilde{v},\tilde{S},\tilde{\varphi})\in\mathfrak{T}$. Here, the constant $k_{\Omega}>0$ is the constant from Korn's inequality and $\nu_{1}$ is the constant from Assumption \eqref{assumption on coefficients}.
\end{theorem}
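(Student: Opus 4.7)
The plan is to construct a dissipative solution for $\gamma=0$ as the weak limit of a sequence of dissipative solutions of the regularized system as $\gamma\to 0$, passing to the limit in the relative energy inequality~\eqref{relative energy estimate for two phase problem}. For each $\gamma>0$, Theorem~\ref{existence of generalized solution for TPP} yields a weak solution $(v_\gamma,S_\gamma,\varphi_\gamma,\mu_\gamma)$, and by Proposition~\ref{generalized solution is dissipative solution for TPP} together with Remark~\ref{a generalized solution obtained from previous theorem is a dissipative solution}, each is a dissipative solution of type $\mathcal{K}$ with $\mathcal{K}$ as in~\eqref{regularity weight for two phase problem}.

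First I would derive $\gamma$-uniform a priori bounds. Choosing the admissible trivial test triple $(\tilde v,\tilde S,\tilde\varphi)=(0,0,0)\in\mathfrak{T}$ (which lies in $\dom{\mathcal{P}(\varphi;\cdot)}$ since $\mathcal{P}(\varphi;0)=0$) reduces~\eqref{relative energy estimate for two phase problem} to the total energy-dissipation estimate~\eqref{total energy estimate for generalized solution for TPP}. This yields uniform bounds of $v_\gamma$ in $L^\infty(0,T;L_{\mathrm{div}}^{2})\cap L^2(0,T;H_{0,\mathrm{div}}^{1})$, $S_\gamma$ in $L^\infty(0,T;L_{\mathrm{sym,Tr}}^{2})$, $\varphi_\gamma$ in $L^\infty(0,T;H^1)\cap L^2(0,T;H^2)$, $\mu_\gamma$ in $L^2(0,T;H^1)$, together with uniform bounds on $\sqrt\gamma\,\nabla S_\gamma$ in $L^2(0,T;L^2)$ and on $t\mapsto\mathcal{P}(\varphi_\gamma(t);S_\gamma(t))$ in $L^1(0,T)$. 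By an Aubin--Lions-type argument analogous to Step~5 of the proof of Theorem~\ref{existence of generalized solution for TPP}, one further obtains strong convergence $\varphi_\gamma\to\varphi$ in $L^2(0,T;H^1)$ and $v_\gamma\to v$ in $L^2(0,T;L^2)$ along a non-relabeled subsequence.

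Next I would fix a smooth test triple $(\tilde v,\tilde S,\tilde\varphi)\in\mathfrak{T}$ and pass to the limit in each term of~\eqref{relative energy estimate for two phase problem}. The initial relative energy and the exponential weight $\exp(\int_s^t\mathcal{K}(\tilde v,\tilde S,\tilde\varphi)\,d\tau)$ are $\gamma$-independent. The convex, continuous relative energy $\mathcal{R}$ is weakly lower semicontinuous. The stress-diffusion contribution $\mathcal{D}_{\mathrm{sd},\gamma}(S_\gamma-\tilde S)=\gamma\int_\Omega|\nabla S_\gamma-\nabla\tilde S|^{2}\dd{x}$ is non-negative and may be dropped. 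The only $\gamma$-dependent term in $\langle\mathcal{A}_\gamma(\tilde v,\tilde S,\tilde\varphi),\cdot\rangle_{\mathbb{Y}}$ involves $\gamma\nabla\tilde S$ and vanishes by smoothness of $\tilde S$; the remaining contributions pass to the limit by weak convergence of $v_\gamma,S_\gamma,\mu_\gamma$ and strong convergence of $\varphi_\gamma$. The plastic-potential terms behave as required: by the lower semicontinuity of $(y,z)\mapsto P(x,y,z)$ and continuity of $y\mapsto P(x,y,z)$ from Assumption~\ref{assumption on the dissipation potential}, one gets both $\int_0^t\mathcal{P}(\varphi;S)\,ds\leq\liminf_{\gamma\to 0}\int_0^t\mathcal{P}(\varphi_\gamma;S_\gamma)\,ds$ and $\int_0^t\mathcal{P}(\varphi_\gamma;\tilde S)\,ds\to\int_0^t\mathcal{P}(\varphi;\tilde S)\,ds$.

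The main obstacle is the weak lower semicontinuity of $\mathcal{W}_0^{(\mathcal{K})}(\cdot,\cdot,\cdot\,|\,\tilde v,\tilde S,\tilde\varphi)$ as displayed in~\eqref{dissipation function like quantity for two phase}, since in the limit $\gamma\to 0$ the uniform $L^2(H^1)$-control on $S_\gamma$ is lost. The dangerous term is $\int_\Omega 2(S-\tilde S)\skw{\nabla v-\nabla\tilde v}:\tilde S\,\dd{x}$, a product of two only weakly convergent quantities. This is precisely where the regularity weight~\eqref{regularity weight for two phase problem} enters: using $\tilde S\in L^\infty$, Korn's inequality $\|\skw{\nabla w}\|_{L^2}\leq k_\Omega\|\sym{\nabla w}\|_{L^2}$ for $w\in H_{0,\mathrm{div}}^{1}(\Omega)$, and Young's inequality, this term is estimated pointwise in time by
\begin{equation*}
    \Bigl|\int_\Omega 2(S-\tilde S)\skw{\nabla v-\nabla\tilde v}:\tilde S\,\dd{x}\Bigr|\leq \mathcal{D}_{\mathrm{s}}(\varphi,v-\tilde v)+\mathcal{K}(\tilde S)\,\mathcal{R}_{\mathrm{el}}(S|\tilde S),
\end{equation*}
where I use $\nu(\varphi)\geq\nu_1$. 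After absorbing this into the Stokes-type dissipation and the $\mathcal{K}$-weighted relative elastic energy already present in $\mathcal{W}_0^{(\mathcal{K})}$, the remainder is the sum of a convex, coercive quadratic form in $(v-\tilde v,S-\tilde S,\nabla\mu-\nabla\tilde\mu)$ plus bilinear terms involving $\tilde v,\tilde S,\tilde\varphi$ or their spatial derivatives that are weakly continuous in $(v_\gamma,S_\gamma,\varphi_\gamma,\mu_\gamma)$ under the available convergences. Hence $\mathcal{W}_0^{(\mathcal{K})}$ is weakly lower semicontinuous, and one concludes~\eqref{relative energy estimate for two phase problem} at $\gamma=0$ by first establishing the inequality in the $\phi$-weighted form via Lemma~\ref{result related to energy estimate}, passing to the liminf term by term, and then inverting the equivalence of Lemma~\ref{result related to energy estimate} to obtain the pointwise-in-$t$ statement of Definition~\ref{dissipative solution for two phase problem}.
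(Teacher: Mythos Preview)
Your proposal is correct and follows essentially the same route as the paper's proof: approximate by the $\gamma>0$ weak/dissipative solutions, extract $\gamma$-uniform bounds from~\eqref{total energy estimate for generalized solution for TPP}, obtain strong convergence of $v_\gamma$ and $\varphi_\gamma$ via Aubin--Lions as in Theorem~\ref{existence of generalized solution for TPP}, and pass to the limit in the $\phi$-weighted form of~\eqref{relative energy estimate for two phase problem} via Lemma~\ref{result related to energy estimate}, using the Korn--Young estimate with the weight~\eqref{regularity weight for two phase problem} to render the quadratic part of $\mathcal{W}_0^{(\mathcal{K})}$ non-negative and hence weakly lower semicontinuous. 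The only point where you are slightly terse compared to the paper is the $\gamma$-dependent term $\gamma\int_\Omega\nabla\tilde S\threedotsbin\nabla(S_\gamma-\tilde S)\,\dd{x}$ in $\langle\mathcal{A}_\gamma,\cdot\rangle$: it does not vanish merely by smoothness of $\tilde S$ (there is no uniform control on $\nabla S_\gamma$), but your earlier observation that $\sqrt{\gamma}\,\nabla S_\gamma$ is bounded in $L^2(0,T;L^2)$ is exactly what is needed to make it $O(\sqrt{\gamma})$, as in the paper's estimate~\eqref{alternative estimate of limit passage for dissipative solution for two phase stress space derivative}.
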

\begin{proof}
	For $\gamma>0$, there exists a dissipative solution $(v_{\gamma},S_{\gamma}.\varphi_{\gamma},\mu_{\gamma})$ of type $\mathcal{K}$ thanks to Theorem~\ref{existence of generalized solution for TPP} and Remark~\ref{a generalized solution obtained from previous theorem is a dissipative solution}. Now, we want to take the limit $\gamma\to0$. From Theorem~\ref{existence of generalized solution for TPP}, we have the energy inequality
    \begin{equation}
        \begin{aligned}
            \mathcal{E}_{\mathrm{tot}}(v_{\gamma}(t),S_{\gamma}(t),\varphi_{\gamma}(t))
		  +\int_{0}^{t}\mathcal{D}_{\mathrm{tot}}(v_{\gamma},S_{\gamma},\varphi_{\gamma},\mu_{\gamma})\dd{\tau}
		\leq \mathcal{E}_{\mathrm{tot}}(v_{0},S_{0},\varphi_{0})
        +\int_{0}^{t}\langle f, v_{\gamma}\rangle_{H^{1}}\dd{\tau}.    
        \end{aligned}
    \end{equation}
	\paragraph{Step 1: Compactness.}
	This estimate gives us the following $\gamma-$uniform bounds:
	\begin{subequations}
	\begin{align}
		&(v_{\gamma})_{\gamma}\mbox{ is bounded in }L^{2}(0,T^{\prime};H_{0,\mathrm{div}}^{1}(\Omega))\mbox{ and in }L^{\infty}(0,T^{\prime};L_{\mathrm{div}}^{2}(\Omega)),
		\label{bounds of dissipative solution to two phase velocity}\\
		&(S_{\gamma})_{\gamma}\mbox{ is bounded in }L^{\infty}(0,T^{\prime};L_{\mathrm{sym,Tr}}^{2}(\Omega)),
		\label{bounds of dissipative solution to two phase stress}\\
		&(\varphi_{\gamma})_{\gamma}\mbox{ is bounded in }L^{\infty}(0,T^{\prime};H^{1}(\Omega)),
		\label{bounds of dissipative solution to two phase phase-field}\\
		&(\nabla\mu_{\gamma})_{\gamma}\mbox{ is bounded in }L^{2}(0,T^{\prime};(L^{2}(\Omega))^3),
		\label{bounds of dissipative solution to two phase chemical potential}
	\end{align}
	\end{subequations}
	for a.e. $0<T^{\prime}<T$. Moreover, from Remark \ref{estimate from weak second CH} and the uniform bounds \eqref{bounds of dissipative solution to two phase phase-field} on $\nabla\varphi_{\gamma}$ and \eqref{bounds of dissipative solution to two phase chemical potential} on $\nabla\mu_{\gamma}$, we deduce the following estimate on $(\mu_{\gamma})_{\gamma}$ 
	\begin{align}
		&\int_{0}^{T^{\prime}}\abs{\int_{\Omega}\mu_{\gamma} \dd{x}}\dd{t}\leq C\cdot T^{\prime}\mbox{ for a.e. }0<T^{\prime}<T,
	\end{align}
	where $C>0$ is a constant. Then, by a classical diagonalization argument, we can extract a not relabeled subsequence and a limit  quadruplet $(v,S,\varphi,\mu)$ such that
	\begin{subequations}
		\begin{align}
			v_{\gamma}\rightharpoonup v&\mbox{ in }L^{2}(0,T^{\prime};H^{1}(\Omega)),
			\label{weak convergence of dissipative solution for two phase velocity space derivative}\\
			v_{\gamma}\overset{*}{\rightharpoonup}v&\mbox{ in }L^{\infty}(0,T^{\prime};L_{\mathrm{div}}^{2}(\Omega)),
			\label{weak convergence of dissipative solution for two phase velocity}\\
			S_{\gamma}\overset{*}{\rightharpoonup}S&\mbox{ in }L^{\infty}(0,T^{\prime};L_{\mathrm{sym,Tr}}^{2}(\Omega)),
			\label{weak convergence of dissipative solution for two phase stress}\\
			\varphi_{\gamma}\overset{*}{\rightharpoonup}\varphi&\mbox{ in }L^{\infty}(0,T^{\prime};H^{1}(\Omega)),
			\label{weak convergence of dissipative solution for two phase phase-field}\\
			\mu_{\gamma}\rightharpoonup\mu&\mbox{ in }L^{2}(0,T^{\prime};H^{1}(\Omega)),
			\label{weak convergence of dissipative solution for two phase chemical potential}
		\end{align}
		for a.e. $0<T^{\prime}<T$. Moreover,  thanks to \eqref{estimate on element in subdifferntial domain}, we also know that $(\varphi_{\gamma})_{\gamma}$ is bounded in $L^{2}(0,T^{\prime};H^{2}(\Omega))$. This yields
		\begin{align}
			\varphi_{\gamma}{\rightharpoonup}\varphi&\mbox{ in }L^{2}(0,T^{\prime};H^{2}(\Omega)).
			\label{weak convergence of dissipative solution for two phase phase-field second derivative}
		\end{align}
	\end{subequations}
	Besides, by repeating arguments in the proof of Theorem \ref{existence of generalized solution for TPP}, we conclude the following strong convergence results for a.e. $0<T^{\prime}<T$
	\begin{subequations}\label{strong convergence of dissipative solution for two phase}
		\begin{align}
			\varphi_{\gamma}\to\varphi&\mbox{ in }L^{2}(0,T^{\prime};H^{1}(\Omega)),
			\label{strong convergence of dissipative solution for two phase phase-field}\\
			v_{\gamma}\to v&\mbox{ in }L^{2}(0,T^{\prime};L^{2}(\Omega)).
			\label{strong convergence of dissipative solution for two phase velocity}
		\end{align} 
	\end{subequations}
	\paragraph{Step 2: Limit passage $\gamma\to0$ in the relative energy-dissipation estimate.}
	By Lemma \ref{result related to energy estimate}, we can transform the relative energy-dissipation estimate \eqref{relative energy estimate for two phase problem} into the following weak form
	\begin{equation}\label{weak formulation of relative energy dissiaption estimate gamma 0}
	    \begin{aligned}
        -&\int_{0}^{T^{\prime}}\phi^{\prime} \left(\mathcal{R}(v_{\gamma},S_{\gamma},\varphi_{\gamma}|\tilde{v},\tilde{S},\tilde{\varphi}\right) \dd{s}
		\\
		&+\int_{0}^{T^{\prime}}\phi \Big(\left\langle \mathcal{A}_{\gamma}(\tilde{v},\tilde{S},\tilde{\varphi}),
		\begin{pmatrix}
			v_{\gamma}-\tilde{v}\\
			S_{\gamma}-\tilde{S}\\
			-\Delta(\varphi_{\gamma}-\tilde{\varphi})+W^{\prime\prime}(\tilde{\varphi})(\varphi_{\gamma}-\tilde{\varphi})+\kappa(\varphi_{\gamma}-\tilde{\varphi})-\frac{\rho_{2}-\rho_{1}}{2}(v_{\gamma}-\tilde{v})\tilde{v}
		\end{pmatrix}
		\right\rangle_{\!\!\!\mathbb Y}
		\\
        &+\mathcal{P}(\varphi_{\gamma};S_{\gamma})-\mathcal{P}(\varphi_{\gamma};\tilde{S})
        +\mathcal{W}_{\gamma}^{(\mathcal{K})}(v_{\gamma},S_{\gamma},\varphi_{\gamma}|\tilde{v},\tilde{S},\tilde{\varphi})\Big)
		\exp\left(\int_{s}^{T^{\prime}} \mathcal{K}(\tilde{v},\tilde{S},\tilde{\varphi}) \dd{\tau} \right) \dd{s}
		\\
		\leq &\mathcal{R}(v_{0},S_{0},\varphi_{0}|\tilde{v}(0),\tilde{S}(0),\tilde{\varphi}(0)) 
		\exp\left(\int_{0}^{T^{\prime}} \mathcal{K}(\tilde{v},\tilde{S},\tilde{\varphi}) \dd{s} \right)
	\end{aligned}
	\end{equation}
	for all $\phi\in \tilde{C}([0,t])$. 
    
	First notice that, for the limit passage regarding the term $\mathcal{P}(\varphi_{\gamma};S_{\gamma})-\mathcal{P}(\varphi_{\gamma};\tilde{S})$ can be shown in a similar way as deriving \eqref{limit passage for generalized solution part five} and \eqref{limit passage for generalized solution part six}.
    
	Now, we investigate the limit passage in the terms involving operator $\mathcal{A}_{\gamma}$. To this end, from~\eqref{operator A for two phase problem part one}-\eqref{operator A for two phase problem part three}, one can notice that
	the terms involving $(v_{\gamma})_{\gamma}$, $(\nabla v_{\gamma})_{\gamma}$, $(S_{\gamma})_{\gamma}$, $(\varphi_{\gamma})_{\gamma}$, $(\nabla\varphi_{\gamma})_{\gamma}$ or $(\Delta\varphi_{\gamma})_{\gamma}$ depend on these quantities only linearly, so that we can pass to limit by weak convergence results \eqref{weak convergence of dissipative solution for two phase velocity space derivative}-\eqref{weak convergence of dissipative solution for two phase phase-field second derivative}. The only exception is the term
	\begin{equation*}
		\int_{\Omega}\gamma\nabla\tilde{S}(t)\threedotsbin\nabla(S_{\gamma}(t)-\tilde{S}(t))\dd{x},
	\end{equation*}
	because we do not have any weak convergence result for $(\nabla S_{\gamma})_{\gamma}$. Instead, notice that
	\begin{equation}\label{alternative estimate of limit passage for dissipative solution for two phase stress space derivative}
	    \begin{aligned}
		&\int_{0}^{T^{\prime}}\int_{\Omega}\gamma\nabla\tilde{S}(t)\threedotsbin\nabla(S_{\gamma}(t)-\tilde{S}(t))\dd{x} 
        \\
		\leq& 
        \sqrt{\gamma}\left(\int_{0}^{T^{\prime}}\lVert\nabla\tilde{S}\rVert_{L^{2}}^{2}\dd{s}\right)^{\frac{1}{2}}\left(\gamma\int_{0}^{T^{\prime}}\lVert\nabla S_{\gamma}\rVert_{L^{2}}^{2}\dd{s}\right)^{\frac{1}{2}}+\gamma\lVert \nabla\tilde{S}\rVert_{L^{2}(0,T^{\prime};L^{2}(\Omega))}^{2}
        ,
	\end{aligned}
	\end{equation}
	where $\gamma\norm{\nabla S_{\gamma}}_{L^{2}}^{2}\leq C$ uniformly in $\gamma$ thanks to \eqref{total energy estimate for generalized solution for TPP}. Thus, the right-hand side of \eqref{alternative estimate of limit passage for dissipative solution for two phase stress space derivative} tends $0$ as $\gamma\to0$.
    
	Now, we consider the limit passage of the terms in $\mathcal{R}
    (v_{\gamma},S_{\gamma},\varphi_{\gamma}|\tilde{v},\tilde{S},\tilde{\varphi})$. By the strong convergence of $(v_{\gamma})_{\gamma}$ in $L^{2}(0,T^{\prime};L^{2}(\Omega))$ from \eqref{strong convergence of dissipative solution for two phase velocity} and the strong convergence of $(\varphi_{\gamma})_{\gamma}$ in $L^{2}(0,T^{\prime};H^{1}(\Omega))$ from \eqref{strong convergence of dissipative solution for two phase phase-field}, we obtain
	\begin{equation}
		\lim\limits_{\gamma\to0}\left(-\int_{0}^{T^{\prime}}\phi^{\prime}\int_{\Omega}\rho_{\gamma}\frac{\abs{v_{\gamma}-\tilde{v}}^{2}}{2}\dd{x}\dd{s}\right)
		=-\int_{0}^{T^{\prime}}\phi^{\prime}\int_{\Omega}\rho\frac{\abs{v-\tilde{v}}^{2}}{2}\dd{x}\dd{s}.
		\label{limit passage of kinetic energy}
	\end{equation}
	Since $\phi^{\prime}\leq0$, $\tilde{\varphi}\in C_{0}^{\infty}(\Omega\times[0,T))$ and $\tilde{\varphi}\in(-1,1)$, the functional
	\begin{equation*}
		(S,\varphi)
		\mapsto
		-\int_{0}^{T^{\prime}}\phi^{\prime}\int_{\Omega}
		\frac{\abs{S-\tilde{S}}^{2}}{2}
		+\frac{\abs{\nabla\varphi-\nabla\tilde{\varphi}}^{2}}{2}
		-W^{\prime}(\tilde{\varphi})(\varphi-\tilde{\varphi})
		+\kappa\abs{\varphi-\tilde{\varphi}}^{2}\dd{x}\dd{s}
	\end{equation*} 
	is convex and continuous on $L^{2}(0,T^{\prime};L_{\mathrm{sym,Tr}}^{2}(\Omega))\times L^{2}(0,T^{\prime};H^{1}(\Omega))$. Therefore, it is weakly lower semicontinuous on $L^{2}(0,T^{\prime};L_{\mathrm{sym,Tr}}^{2}(\Omega))\times L^{2}(0,T^{\prime};H^{1}(\Omega))$. Hence, by the weak*-convergence of $(S_{\gamma})_{\gamma}$ in $L^{\infty}(0,T^{\prime};L_{\mathrm{sym,Tr}}^{2}(\Omega))$ from \eqref{weak convergence of dissipative solution for two phase stress} and weak*-convergence of $(\varphi_{\gamma})_{\gamma}$ in $L^{\infty}(0,T^{\prime};H^{1}(\Omega))$ from \eqref{weak convergence of dissipative solution for two phase phase-field}, we deduce 
	\begin{equation}\label{limit passage of convex energy}
	\begin{aligned}
		&-\int_{0}^{T^{\prime}}\phi^{\prime}\int_{\Omega}
		\frac{\abs{S-\tilde{S}}^{2}}{2}
		+\frac{\abs{\nabla\varphi-\nabla\tilde{\varphi}}^{2}}{2}
		-W^{\prime}(\tilde{\varphi})(\varphi-\tilde{\varphi})
		+\kappa\abs{\varphi-\tilde{\varphi}}^{2}\dd{x}\dd{s}
	   \\
		\leq& \liminf_{\gamma\to0}\left(-\int_{0}^{T^{\prime}}\phi^{\prime}\int_{\Omega}
		\frac{\abs{S_{\gamma}-\tilde{S}}^{2}}{2}
		+\frac{\abs{\nabla\varphi_{\gamma}-\nabla\tilde{\varphi}}^{2}}{2}
		-W^{\prime}(\tilde{\varphi})(\varphi_{\gamma}-\tilde{\varphi})
		+\kappa\abs{\varphi_{\gamma}-\tilde{\varphi}}^{2}\dd{x}\dd{s}\right).
	\end{aligned}
	\end{equation}
	Moreover, since $\varphi_{\gamma}\in (-1,1)$ a.e. in $\Omega\times(0,T^{\prime})$, by the continuity of $W$, we have $|W(\varphi_{\gamma})|\leq C$ a.e. in $\Omega\times(0,T^{\prime})$ for all $\gamma>0$. Hence, with the help of the dominated convergence theorem, we derive
	\begin{equation}
		\lim_{\gamma\to0}-\int_{0}^{T^{\prime}}\phi^{\prime}\int_{\Omega}W(\varphi_{\gamma})\dd{x}\dd{s}=-\int_{0}^{T^{\prime}}\phi^{\prime}\int_{\Omega}W(\varphi)\dd{x}\dd{s}.
		\label{limit passage of energy caused by double well potential}
	\end{equation}
	Summing up~\eqref{limit passage of kinetic energy}-\eqref{limit passage of energy caused by double well potential} yields
	\begin{equation}
		-\int_{0}^{T^{\prime}}\phi^{\prime} \mathcal{R}(v,S,\varphi|\tilde{v},\tilde{S},\tilde{\varphi}) \dd{s}
		\leq\liminf_{\gamma\to0}\left(-\int_{0}^{T^{\prime}}\phi^{\prime} \mathcal{R}(v_{\gamma},S_{\gamma},\varphi_{\gamma}|\tilde{v},\tilde{S},\tilde{\varphi}) \dd{s}\right).
	\end{equation}
    
    Now, we turn to the limit passage in $\mathcal{W}_{\gamma}^{(\mathcal{K})}(v_{\gamma},S_{\gamma},\varphi_{\gamma}|\tilde{v},\tilde{S},\tilde{\varphi})$. 
    For this, we recall from~\eqref{dissipation function like quantity for two phase} that
    \begin{equation*}
        \mathcal{W}_{\gamma}^{(\mathcal{K})}(v,S,\varphi|\tilde{v},\tilde{S},\tilde{\varphi})=\mathcal{W}_{0}^{(\mathcal{K})}(v,S,\varphi|\tilde{v},\tilde{S},\tilde{\varphi})+\mathcal{D}_{\mathrm{sd},\gamma}(S-\tilde{S})\,,
    \end{equation*}
    and we split $\mathcal{W}_{0}^{(\mathcal{K})}(v,S,\varphi|\tilde{v},\tilde{S},\tilde{\varphi})$ into two parts as follows:
	\begin{equation}
		\mathcal{W}_{0}^{(\mathcal{K})}(v_{\gamma},S_{\gamma},\varphi_{\gamma}|\tilde{v},\tilde{S},\tilde{\varphi})
		=
		\mathfrak{Q}(v_{\gamma},S_{\gamma},\varphi_{\gamma},\mu_{\gamma}|\tilde{v},\tilde{S},\tilde{\varphi},\tilde{\mu})
		+\mathfrak{R}(v_{\gamma},S_{\gamma},\varphi_{\gamma},\mu_{\gamma}|\tilde{v},\tilde{S},\tilde{\varphi},\tilde{\mu}),
	\end{equation}
	where
	\begin{equation}\label{quadratic terms}
	    \begin{aligned}
		\mathfrak{Q}(v,S,\varphi|\tilde{v},\tilde{S},\tilde{\varphi})
        :=&\int_{\Omega}2\nu_{1}\abs{\sym{\nabla v}-\sym{\nabla\tilde{v}}}^{2}+\frac{1}{2}\abs{\nabla\mu-\nabla\tilde{\mu}}^{2}+\frac{1}{2}\abs{\nabla\mu}^{2} \dd{x}
		\\
		&-\int_{\Omega}2(S-\tilde{S})\skw{\nabla v-\nabla\tilde{v}}:\tilde{S} \dd{x}
		\\
		&+\mathcal{K}(\tilde{v},\tilde{S},\tilde{\varphi})\int_{\Omega}\frac{\abs{S-\tilde{S}}^{2}}{2}+\frac{\abs{\nabla\varphi-\nabla\tilde{\varphi}}^{2}}{2}+\kappa\abs{\varphi-\tilde{\varphi}}^{2} 
        \dd{x},
	\end{aligned}
	\end{equation}
	collects all the quadratic terms with respect to $(v,S,\varphi,\mu)$, and where
	\begin{equation}\label{remaining terms}
	    \begin{aligned}
	    \mathfrak{R}(v,S,\varphi|\tilde{v},\tilde{S},\tilde{\varphi})
		:=
        &\int_{\Omega}2\nu(\varphi)\abs{\sym{\nabla v}-\sym{\nabla\tilde{v}}}^{2}
        -2\nu_{1}\abs{\sym{\nabla v}-\sym{\nabla\tilde{v}}}^{2}\dd{x}
    \\
        &+\int_{\Omega}2(\nu(\varphi)-\nu(\tilde{\varphi}))\sym{\nabla\tilde{v}}:\left({\nabla v}-{\nabla\tilde{v}}\right)-\frac{1}{2}\abs{\nabla\tilde{\mu}}^{2}\dd{x}
    \\
        &+\int_{\Omega}\Delta\tilde{\mu}\left(-\Delta(\varphi-\tilde{\varphi})+W^{\prime\prime}(\tilde{\varphi})(\varphi-\tilde{\varphi})\right)\dd{x}
    \\
        &+\int_{\Omega}(v-\tilde{v})\otimes(\rho v-\tilde{\rho}\tilde{v}+J-\tilde{J}):\nabla\tilde{v} 
        +(\rho-\tilde{\rho})(v-\tilde{v})\cdot\partial_{t}\tilde{v}\dd{x}
    \\
        &-\int_{\Omega}(\eta(\varphi)-\eta(\tilde{\varphi}))(S-\tilde{S}):\nabla\tilde{v}
		+(\eta(\varphi)-\eta(\tilde{\varphi}))\tilde{S}:({\nabla v}-{\nabla\tilde{v}})\dd{x}
	\\
		&-\int_{\Omega}(S-\tilde{S})\otimes(v-\tilde{v})\threedotsbin\nabla\tilde{S} \dd{x}
	\\
		&-\int_{\Omega}\tilde{\mu}(\nabla\varphi-\nabla\tilde{\varphi})\cdot(v-\tilde{v})
		-(\nabla\varphi-\nabla\tilde{\varphi})\otimes(\nabla\varphi-\nabla\tilde{\varphi}):\nabla\tilde{v} \dd{x}
	\\
		&+\int_{\Omega}\kappa(\nabla\mu-\nabla\tilde{\mu})\cdot(\nabla\varphi-\nabla\tilde{\varphi})
		+\kappa(v-\tilde{v})\cdot\nabla\tilde{\varphi}(\varphi-\tilde{\varphi}) \dd{x}
    \\
        &+\mathcal{K}(\tilde{v},\tilde{S},\tilde{\varphi})\int_{\Omega}\rho\frac{\abs{v-\tilde{v}}^{2}}{2}+W(\varphi)
        -W(\tilde{\varphi})
        -W^{\prime}(\tilde{\varphi})(\varphi-\tilde{\varphi}) \dd{x}
	\end{aligned}
	\end{equation}
	collects all the remaining terms.
    
	Next, we discuss the limit passage for the quadratic terms in $\mathfrak{Q}$. Recall that the non-negativity of a quadratic form implies convexity, see \cite[][Proposition 3.71]{bonnansPerturbationAnalysisOptimization2000b} for details. In order to show that $\mathfrak{Q}(v,S,\varphi|\tilde{v},\tilde{S},\tilde{\varphi})$ is non-negative, we make the following estimates:
	\begin{equation}\label{lower estimate of the quadratic terms}
	    \begin{aligned}
		\mathfrak{Q}(v,S,\varphi,\mu|\tilde{v},\tilde{S},\tilde{\varphi},\tilde{\mu})\geq&
        \int_{\Omega}2\nu_{1}\abs{\sym{\nabla v}-\sym{\nabla\tilde{v}}}^{2}+\frac{1}{2}\abs{\nabla\mu-\nabla\tilde{\mu}}^{2}
        +\frac{1}{2}\abs{\nabla\mu}^{2} \dd{x}
		\\
		&+\mathcal{K}(\tilde{v},\tilde{S},\tilde{\varphi})\int_{\Omega}\frac{\abs{S-\tilde{S}}^{2}}{2}+\frac{\abs{\nabla\varphi-\nabla\tilde{\varphi}}^{2}}{2}+\kappa\abs{\varphi-\tilde{\varphi}}^{2}+\abs{v-\tilde{v}}^{2} \dd{x}
		\\
		&-\abs{\int_{\Omega}2(S-\tilde{S})\skw{\nabla v-\nabla\tilde{v}}:\tilde{S} \dd{x}}
	\end{aligned}
	\end{equation}
	
    Next, we estimate the negative terms in~\eqref{lower estimate of the quadratic terms} using H\"older's inequality, Korn's inequality and Young inequality:
	\begin{equation}\label{bounds on the negative terms in the lower estiamte of the equadratic terms}
	    \begin{aligned}
		\int_{\Omega}2(S-\tilde{S})\skw{\nabla v-\nabla\tilde{v}}:\tilde{S} \dd{x}
        \leq& 2\lVert \tilde{S}\rVert_{L^{\infty}}
        \left(\int_{\Omega}\abs{S-\tilde{S}}^{2}\dd{x}\right)^{\frac{1}{2}}
        \left( \int_{\Omega}\abs{{\nabla v}-{\nabla \tilde{v}}}^{2}\dd{x}\right)^{\frac{1}{2}}
        \\
        \leq&2k_{\Omega}\lVert \tilde{S}\rVert_{L^{\infty}}
        \left(\int_{\Omega}\abs{S-\tilde{S}}^{2}\dd{x}\right)^{\frac{1}{2}}
        \left( \int_{\Omega}\abs{\sym{\nabla v}-\sym{\nabla \tilde{v}}}^{2}\dd{x}\right)^{\frac{1}{2}}
        \\
		\leq& \frac{k_{\Omega}^{2}}{\nu_{1}}\lVert \tilde{S}\rVert_{L^{\infty}}^{2}\int_{\Omega}\frac{\abs{S-\tilde{S}}^{2}}{2} \dd{x}
		+2\nu_{1}\int_{\Omega}\abs{\sym{\nabla v}-\sym{\nabla\tilde{v}}}^{2} \dd{x}.
	\end{aligned}
	\end{equation}
	By making use of the bounds \eqref{lower estimate of the quadratic terms} and \eqref{bounds on the negative terms in the lower estiamte of the equadratic terms}, one can see that $\mathfrak{Q}$ is non-negative and therefore convex. Besides, notice that $\mathfrak{Q}$ is also a continuous functional on the space $H_{0,\mathrm{div}}^{1}(\Omega)\times L_{\mathrm{sym,Tr}}^{2}(\Omega)\times H^{1}(\Omega)\times H^{1}(\Omega)$, so $\mathfrak{Q}$ is weakly lower semicontinuous on this space. This allows us to pass to the limit $\gamma\to0$, i.e., 
	\begin{equation}
	    \begin{aligned}
		&\int_{0}^{T^{\prime}}\phi\mathfrak{Q}(v,S,\varphi,\mu|\tilde{v},\tilde{S},\tilde{\varphi},\tilde{\mu})
		\exp\left(\int_{s}^{T^{\prime}}\mathcal{K}(\tilde{v},\tilde{S},\tilde{\varphi}) \dd{\tau}\right) \dd{s}
		\nonumber\\
		\leq&\liminf_{\gamma\to0}
		\int_{0}^{T^{\prime}}\phi\mathfrak{Q}(v_{\gamma},S_{\gamma},\varphi_{\gamma},\mu_{\gamma}|\tilde{v},\tilde{S},\tilde{\varphi},\tilde{\mu})
		\exp\left(\int_{s}^{T^{\prime}}\mathcal{K}(\tilde{v},\tilde{S},\tilde{\varphi}) \dd{\tau}\right) \dd{s}.	
	\end{aligned}
	\end{equation} 
    
	Now, we turn to the limit passage $\gamma\to0$ in $\mathfrak{R}$. Since $2\nu(\varphi_{\gamma})-2\nu_{1}\geq0$ by \eqref{assumption on coefficients}, then by the weak convergence of $(\sym{\nabla v_{\gamma}})_{\gamma}$ in $L^{2}(0,T^{\prime};L^{2}(\Omega))$ from \eqref{weak convergence of dissipative solution for two phase velocity space derivative} and the convergence of $(\varphi_{\gamma})_{\gamma}$ almost everywhere in $\Omega\times(0,T^{\prime})$ obtained from~\eqref{strong convergence of dissipative solution for two phase phase-field}, we deduce 
	\begin{equation}
	    \begin{aligned}
		&\int_{0}^{T^{\prime}}\phi\int_{\Omega}(2\nu(\varphi)-2\nu_{1})\abs{\sym{\nabla v}-\sym{\nabla\tilde{v}}}^{2} \dd{x}
		\exp(\int_{s}^{T^{\prime}}\mathcal{K}(\tilde{v},\tilde{S},\tilde{\varphi}) \dd{\tau}) \dd{s}
		\nonumber\\
		\leq&\liminf_{\gamma\to0}\int_{0}^{T^{\prime}}\phi\int_{\Omega}(2\nu(\varphi_{\gamma})-2\nu_{1})\abs{\sym{\nabla v_{\gamma}}-\sym{\nabla\tilde{v}}}^{2} \dd{x}
		\exp(\int_{s}^{T^{\prime}}\mathcal{K}(\tilde{v},\tilde{S},\tilde{\varphi}) \dd{\tau}) \dd{s}.
	\end{aligned}
	\end{equation}
	The limit passage of the remaining part of $\mathfrak{R}$ is a direct consequence of the convergence results \eqref{weak convergence of dissipative solution for two phase velocity space derivative}-\eqref{strong convergence of dissipative solution for two phase velocity} with the aid of \eqref{assumption on coefficients}. 
    We exemplarily discuss one term and note that the remaining terms can be handled with in a similar way. We estimate
	\begin{equation*}
    \begin{aligned}
		&\abs{\int_{0}^{T^{\prime}}\int_{\Omega}
        \left(
        (v_{\gamma}-\tilde{v})\otimes(\rho_{\gamma}v_{\gamma}-\tilde{\rho}\tilde{v})
        -(v-\tilde{v})\otimes(\rho v-\tilde{\rho}\tilde{v})\right):\nabla\tilde{v} 
        \dd{x} \dd{s}}
		\\
		\leq& \lVert \nabla\tilde{v}\rVert_{L^{\infty}(\Omega\times(0,T^{\prime}))}
		\int_{0}^{T^{\prime}}\int_{\Omega}
        \abs{(v_{\gamma}-\tilde{v})\otimes(\rho_{\gamma}v_{\gamma}-\tilde{\rho}\tilde{v})
        -(v-\tilde{v})\otimes(\rho v-\tilde{\rho}\tilde{v})} \dd{x} \dd{s},
	\end{aligned}
    \end{equation*}
	where we have used that $\tilde{v}\in C_{0,\mathrm{div}}^{\infty}(\Omega\times[0,T))$. The term on the right-hand side tends to zero, thanks to the strong convergence of $(v_{\gamma})_{\gamma}$ in $L^{2}(0,T^{\prime};L^{2}(\Omega))$ from \eqref{strong convergence of dissipative solution for two phase velocity} and the strong convergence of $(\varphi_{\gamma})_{\gamma}$ in $L^{2}(0,T^{\prime};H^{1}(\Omega))$ from \eqref{strong convergence of dissipative solution for two phase phase-field}.
    
	In total, taking the limit $\gamma\to0$ in \eqref{weak formulation of relative energy dissiaption estimate gamma 0} implies \eqref{relative energy estimate for two phase problem} with $\gamma=0$. Therefore, $(v,S,\varphi,\mu)$ is a dissipative solution to system \eqref{two phase system alternative} with $\gamma=0$.
\end{proof}
\begin{remark}
    By Proposition~\ref{energy-variational solution satisfies the weak formulation of Cahn-Hilliard equation} and Proposition~\ref{energy-variational solution satisfies the weak formulation of momentum balance}, one can see that a dissipative solution $(v,S,\varphi,\mu)$ obtained from Theorem~\ref{existence of dissipative solution for non-regularized system} satisfies weak formulation~\eqref{weak velocity TPP} and~\eqref{weak first CH TPP}. Moreover, $(v,S,\varphi,\mu)$ also satisfies~\eqref{weak second CH TPP} by using the same argument as in the proof of Theorem~\ref{existence of generalized solution for TPP}. In addition, $\varphi$ takes value $(-1,1)$ a.e. in $\Omega\times(0,T)$.
\end{remark}
    \paragraph{Acknowledgements: }F.C.\ and M.T.\ are grateful for the financial support by Deutsche Forschungsgemeinschaft (DFG) within CRC 1114 \emph{Scaling Cascades in Complex Systems}, Project-Number 235221301, Project B09 \emph{Materials with Discontinuities on Many Scales}. 
	\bibliographystyle{alpha}
	\addcontentsline{toc}{section}{References}
	\bibliography{Refs}
	\clearpage

\end{document}